\definecolor{light-gray}{gray}{0.95}
\def\centerarc[#1](#2)(#3:#4:#5){\draw[#1] ($(#2)+({#5*cos(#3)},{#5*sin(#3)})$) arc (#3:#4:#5);}
\newtheorem{theorem}{Theorem}[section]
\newtheorem{lemma}[theorem]{Lemma}
\newtheorem{proposition}[theorem]{Proposition}
\newtheorem{remark}[theorem]{Remark}
\newcommand{\mf}[1]{{\mathfrak #1}}
\newcommand{\bb}[1]{{\mathbb #1}}
\newcommand{\bs}[1]{{\boldsymbol #1}}
\newcommand{\ms}[1]{{\mathscr #1}}
\newcounter{as}[section]
\newcommand{\<}{\langle}
\renewcommand{\>}{\rangle}
\newcommand{\R}{{\mathbb R}}
\newcommand{\abs}[1]{\left|#1 \right|}
\newcommand{\pa}[1]{\left(#1 \right)}
\definecolor{dkgreen}{rgb}{0,0.6,0}
\definecolor{gray}{rgb}{0.5,0.5,0.5}
\definecolor{header}{gray}{0.3}
\title[boundary driven exclusion processes]{Stationary states of
  boundary driven exclusion processes with nonreversible boundary
  dynamics}
\author{C. Erignoux, C. Landim, T. Xu}
\address{\noindent Equipe PARADYSE, Bureau B211
Centre INRIA Lille Nord-Europe
Park Plaza, Parc scientifique de la Haute-Borne, 40 Avenue Halley B\^atiment B, 59650 Villeneuve-d'Ascq
France
  \newline e-mail: \rm \texttt{clement.erignoux@inria.fr} }
\address{\noindent IMPA, Estrada Dona Castorina 110, CEP 22460 Rio de
  Janeiro, Brasil and CNRS UMR 6085, Universit\'e de Rouen, France.
  \newline e-mail: \rm \texttt{landim@impa.br} }
\address{\noindent IMPA, Estrada Dona Castorina 110, CEP 22460 Rio de
  Janeiro, Brasil 
  \newline e-mail: \rm \texttt{tcxu@impa.br} }
\begin{document}

\begin{abstract}
  We prove a law of large numbers for the empirical density of
  one-dimensional, boundary driven, symmetric exclusion processes with
  different types of non-reversible dynamics at the boundary. The
  proofs rely on duality techniques.
\end{abstract}

\maketitle

\section{Introduction}
\label{sec0}

This article provides partial answers to a question raised to us by
H. Spohn. The stationary states of boundary driven interacting
particle systems have been extensively studied lately, as solvable
examples of nonequilibrium stationary states, cf. \cite{d07, bdgjl15}
and references therein.  One of the main goals is to derive in this
context the nonequilibrium functional which plays a role analogous to
the entropy in the Onsager theory of nonequilibrium thermodynamics
\cite{o31, om53}.

This program has been achieved for a class of boundary driven
interacting particle systems in two different ways. On the one hand,
relying on Derrida's formula for the stationary state as a product of
matrices, Derrida, Lebowitz and Speer \cite{dls02} obtained an
explicit formula for the nonequilibrium free energy of one-dimensional
boundary driven exclusion processes. On the other hand and by the same
time, Bertini et al. \cite{bdgjl02} derived the same formula for the
nonequilibrium free energy by solving the Hamilton-Jacobi equation for
the quasi-potential associated to the dynamical large deviations
principle for the empirical density.

Both approaches rely on the deduction of a large deviations principle
for the empirical density under the stationary state. The proof of
this result depends strongly on the non-conservative boundary dynamics
which models the interaction of the system with the reservoirs, and it
has been achieved only for dynamics which satisfy the detailed balance
conditions with respect to some Gibbs measure. If this dynamics is
slightly perturbed, Derrida's formula for the stationary state as a
product of matrices is no more available, and a bound for the entropy
production, one of the fundamental ingredients in the proof of the one
and two blocks estimates, is no more available.

We examine in this article the asymptotic behavior of the empirical
density under the stationary state of boundary driven exclusion
processes whose boundary dynamics do not satisfy a detailed balance
condition.  We consider three classes of interaction. The first one
consists of all boundary dynamics whose generator does not increase
the degree of functions of degree $1$ and $2$. The second class
includes all dynamics whose interaction with the reservoirs depends
weakly on the configuration. Finally, the third class comprises all
exclusion processes whose boundary dynamics is speeded-up. Using
duality techniques, we prove a law of large numbers for the empirical
measure under the stationary state for these three types of
interaction with the reservoirs.

Asymmetric exclusion dynamics on the positive half-line with complex
boundary dynamics have been considered by Sonigo \cite{s09}.

\section{Notation and Results}
\label{sec4}

Consider the symmetric, simple exclusion process on
$\Lambda_N=\{1,\dots ,N-1\}$ with reflecting boundary conditions. This
is the Markov process on $\Omega_N=\{0,1\}^{\Lambda_N}$ whose
generator, denoted by $L_{b,N}$, is given by
\begin{equation}
\label{02}
(L_{b,N}f)(\eta)\;=\; \sum_{k=1}^{N-2}
\{f(\sigma^{k,k+1}\eta)-f(\eta)\}\;.
\end{equation}
In this formula and below, the configurations of $\Omega_N$ are
represented by the Greek letters $\eta$, $\xi$, so that $\eta_k=1$ if
site $k\in\Lambda_N$ is occupied for the configuration $\eta$ and
$\eta_k=0$ otherwise. The symbol $\sigma^{k,k+1}\eta$ represents the
configuration obtained from $\eta$ by exchanging the occupation
variables $\eta_k$, $\eta_{k+1}$:
\begin{equation*}
(\sigma^{k,k+1}\eta)_j=
\begin{cases}
\eta_{k+1} & \mbox{ if } j=k\\
\eta_k & \mbox{ if } j=k+1\\
\eta_j & \mbox{ if } j\in \Lambda_N\setminus \{k,k+1\}\;. 
\end{cases}
\end{equation*}

This dynamics is put in contact at both ends with non-conservative
dynamics. On the right, it is coupled to a reservoir at density
$\beta\in (0,1)$. This interaction is represented by the generator
$L_{r,N}$ given by
\begin{equation}
\label{03}
(L_{r,N}f)(\eta)= \{ \beta (1-\eta_{N-1})+(1-\beta)\eta_{N-1}\}
\, \{f(\sigma^{N-1}\eta)-f(\eta)\} \;,
\end{equation}
where $\sigma^k\eta$, $k\in \Lambda_N$, is the configuration obtained
from $\eta$ by flipping the occupation variable $\eta_k$,
\begin{equation*}
(\sigma^k\eta)_j=\begin{cases}
1-\eta_k & \mbox{ if } j=k\\
\eta_j & \mbox{ if } j\in \Lambda_N\setminus \{k\}\;. 
\end{cases}
\end{equation*}

On the left, the system is coupled with different non-conservative
dynamics. The purpose of this paper is to investigate the stationary
state induced by these different interactions.

\subsection{Boundary dynamics which do not increase degrees}\label{ss1}
The first left boundary dynamics we consider are those which keep the
degree of functions of degree $1$ and $2$: those whose generator,
denoted by $L_{l,N}$, are such that for all $j\not = k$,
\begin{equation}
\label{f15a}
\begin{gathered}
L_{l,N} \, \eta_j \;=\; a^j \;+\; \sum_\ell a^j_\ell \, \eta_\ell\;,
\\
L_{l,N} \, \eta_j \, \eta_k \;=\; b^{j,k} \;+\; \sum_\ell b^{j,k}_\ell \, \eta_\ell
\;+\; \sum_{\ell,m} b^{j,k}_{\ell,m} \, \eta_\ell \, \eta_m 
\end{gathered}
\end{equation}
for some coefficients $a^j$, $a^j_\ell$, $b^{j,k}$, $b^{j,k}_\ell$,
$b^{j,k}_{\ell,m}$. 

Fix $p\ge 0$, and let $\Lambda_p^* = \{-p, \dots, 0\}$,
$\Omega^*_p = \{0,1\}^{\Lambda_p^*}$. Consider the generators of
Markov chains on $\Omega^*_p$ given by
\begin{equation*}
(L_Rf)(\eta) \; =\; \sum_{j\in \Lambda_p^*} r_j\,
\big[\, \alpha_j\, (1-\eta_j) \,+\, \eta_j\, (1-\alpha_j)\, \big] \,  
\{\, f(\sigma^{j}\eta)-f(\eta)\, \}\;,
\end{equation*}
\begin{equation*}
(L_Cf)(\eta) \; =\; \sum_{j\in \Lambda_p^*} 
\sum_{k\in \Lambda_p^*} c_{j,k}\,
\big[\,\eta_k\, (1-\eta_j) \,+\, \eta_j\, (1-\eta_k) \, \big]\,  
\{\, f(\sigma^{j}\eta)-f(\eta)\, \}\;,
\end{equation*}
\begin{equation*}
  (L_Af)(\eta) \; =\; \sum_{j \in \Lambda_p^*} \sum_{k \in \Lambda_p^*} a_{j,k}\,
  \big[\, \eta_k\, \eta_j \,+\, (1-\eta_j)\, (1-\eta_k)\, \big]\,   
  \{\, f(\sigma^{j}\eta)-f(\eta)\, \}\;.
\end{equation*}
In these formulae and below, $r_j$, $ c_{j,k}$ and $a_{j,k}$ are
non-negative constants, $0\le \alpha_j \le 1$, and $c_{j,j} =
a_{j,j}=0$ for $j\in \Lambda_p^*$.

The generator $L_R$ models the contact of the system at site $j$ with
an infinite reservoir at density $\alpha_j$. At rate $r_j\ge 0$, a
particle, resp. a hole, is placed at site $j$ with probability
$\alpha_j$, resp. $1-\alpha_j$. The generator $L_C$ models a
replication mechanism, at rate $c_{j,k}\ge 0$, site $j$ copies the
value of site $k$.  The generator $L_A$ acts in a similar way. At rate
$a_{j,k}\ge 0$, site $j$ copies the inverse value of site $k$. We add
to these dynamics a stirring evolution which exchange the occupation
variables at nearest-neighbor sites:
\begin{equation*}
(L_Sf)(\eta) \; =\; \sum_{j=-p}^{-1}
\{f(\sigma^{j,j+1}\eta)-f(\eta)\} \;.
\end{equation*}

The evolution at the left boundary we consider consists in the
superposition of the four dynamics introduced above. The generator,
denoted by $L_l$, is thus given by
\begin{equation*}
L_l \;=\; L_S \;+\; L_R \;+\; L_C \;+\; L_A \;.
\end{equation*}

Denote by $L_G$ the generator of a general Glauber dynamics on
$\Omega^*_p$: 
\begin{equation}
\label{f16}
(L_{G}f)(\eta) \; =\; \sum_{k=-p}^{0} c_k(\eta)\,  
\{f(\sigma^{k}\eta)-f(\eta)\}\;,
\end{equation}
where $c_k$ are non-negative jump rates which depend on the entire
configuration $(\eta_{-p}, \dots, \eta_0)$. We prove in Lemma
\ref{ll05} that any Markov chain on $\Omega^*_p$ whose generator
$L_{D}$ is given by $L_{D} = L_S + L_G$ and which fulfills conditions
\eqref{f15a} can be written as $L_S + L_R + L_C + L_A$ [we show that
there are non-negative parameters $r_j$, $c_{j,k}$, $a_{j,k}$ such that
$L_G = L_R + L_C + L_A$]. Therefore, by examining the Markov chain
whose left boundary condition is characterized by the generator $L_l$
we are considering the most general evolution in which a stirring
dynamics is superposed with a spin flip dynamics which fulfills
condition \eqref{f15a}.

We prove in Lemma \ref{ll04} that the Markov chain induced
by the generator $L_l$ has a unique stationary state if
\begin{equation}
\label{f15}
\sum_{j\in\Lambda^*_p} r_j \;+\; 
\sum_{j\in \Lambda_p^*} \sum_{k\in \Lambda_p^* } a_{j,k}  \;>\;0 \;.
\end{equation}
Assume that this condition is in force. Denote by $\mu$ the unique
stationary state, and let
\begin{equation}
\label{f03a}
\rho (k)\;=\; E_{\mu}[\eta_k]\;, \quad k\in\Lambda^*_p\;,
\end{equation}
be the mean density at site $k$ under the measure $\mu$. Clearly,
$0\le \rho (k) \le 1$ for all $k\in\Lambda^*_p$.  Since $E_{\mu}[L_l
\eta_j]=0$, a straightforward computation yields that
\begin{equation}
\label{f05a}
0 \;=\; r_j \, [\alpha_j - \rho(j)]  \,+\, (\ms C \rho )(j)
\;+\; (\ms A\rho)(j) \;+\; (\ms T \rho)(j) \;, \quad
j\in \Lambda^*_p\;,
\end{equation}
where 
\begin{equation*}
(\ms C \rho )(j) \;=\; \sum_{k\in \Lambda^*_p} c_{j,k}\, [\rho(k) - \rho(j)]
\;, \quad
(\ms A \rho )(j) \;=\; \sum_{k\in \Lambda^*_p} a_{j,k}\, [1 - \rho(k) - \rho(j)]\;,
\end{equation*}
\begin{equation*}
(\ms T \rho)(j) \;=\;
\begin{cases}
\rho(-p+1) - \rho(-p) & \text{ if $j=-p$}\;, \\
\rho(-1) - \rho(0) & \text{ if $j=0$}\;, \\
\rho(j+1) + \rho(j-1) - 2 \rho(j) & \text{ otherwise.}
\end{cases}
\end{equation*}
We prove in Lemma \ref{ll02} that \eqref{f05a} has a unique solution if
condition \eqref{f15} is in force.

Let $\Lambda_{N,p} =\{-p, \dots ,N-1\}$. Consider the boundary driven,
symmetric, simple exclusion process on $\Omega_{N,p} =
\{0,1\}^{\Lambda_{N,p}}$ whose generator, denoted by $L_{N}$, is given
by
\begin{equation}
 \label{eq:LNDef}
L_{N} \;=\; L_l \,+\, L_{0,1} \,+\,  L_{b,N} \,+\,  L_{r,N}\;,
\end{equation}
where $L_{0,1}$ represent a stirring dynamics between sites $0$ and
$1$: 
\begin{equation*}
(L_{0,1}f)(\eta) \; =\; f(\sigma^{0,1}\eta)-f(\eta)\;.
\end{equation*}
There is a little abuse of notation in the previous formulae because
the generators are not defined on the space $\Omega_{N,p}$ but on
smaller spaces. We believe, however, that the meaning is clear.

Due to the right boundary reservoir and the stirring dynamics, the
process is ergodic.  Denote by $\mu_N$ the unique stationary state,
and let
\begin{equation}
\label{01}
\rho_N(k)\;=\; E_{\mu_N}[\eta_k]\;, \quad k\in\Lambda_{N,p}\;,
\end{equation}
be the mean density at site $k$ under the stationary state. Of course,
$0\le \rho_N(k) \le 1$ for all $k\in\Lambda_{N,p}$, $N\ge 1$. We prove
in Lemma \ref{ll03} that under condition \eqref{f15} there exists a
finite constant $C_0$, independent of $N$, such that
\begin{equation*}
\big|\, \rho_N(k) \,-\, \rho(k)\,\big| \;\le\; C_0/N \;, \quad
\text{for all}\;\; -p\, \le\,  k \,\le\, 0\;,
\end{equation*}
where $\rho$ is the unique solution of \eqref{f05a}.

The first main result of this articles establishes a law of large
numbers for the empirical measure under the stationary state $\mu_N$.

\begin{theorem}
\label{mt0}
Assume that $\sum_j r_j>0$. Then, for any continuous function $G:[0,1]
\to \bb R$,
\begin{equation*}
\lim_{N\to \infty} E_{\mu_N} \Big[ \, \Big|
\frac 1N \sum_{k=1}^{N-1} G(k/N) \, [\eta_k -
\bar u (k/N)] \, \Big|\, \Big] =0\;,
\end{equation*}
where $\bar u$ is the unique solution of the linear equation
\begin{equation}
\label{07}
\begin{cases}
0=\Delta u  \,, \\
u(0)=\rho(0)\,, \;\; u(1) =\beta \,.
\end{cases}
\end{equation}
\end{theorem}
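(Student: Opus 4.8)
The plan is to reduce the statement to an $L^2$ bound and then to a control of the first two correlation functions of $\mu_N$ obtained by duality. By the Cauchy--Schwarz inequality it is enough to prove
\[
\lim_{N\to\infty} E_{\mu_N}\!\Big[\Big(\tfrac1N\textstyle\sum_{k=1}^{N-1}G(k/N)\,[\eta_k-\bar u(k/N)]\Big)^{\!2}\Big]=0 .
\]
Setting $e_N(k):=\bar u(k/N)-\rho_N(k)$ and $\varphi_N(k,l):=E_{\mu_N}[\eta_k\eta_l]-\rho_N(k)\rho_N(l)$, the expectation above equals $N^{-2}\sum_{k,l=1}^{N-1}G(k/N)G(l/N)\big(\varphi_N(k,l)+e_N(k)e_N(l)\big)$, since $E_{\mu_N}[\eta_k-\rho_N(k)]=0$. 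Hence it suffices to establish (i) $\max_{1\le k\le N-1}|\rho_N(k)-\bar u(k/N)|\to 0$, which kills the $e_N$ terms, and (ii) $N^{-2}\sum_{1\le k,l\le N-1}|\varphi_N(k,l)|\to 0$.

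Proof of (i). Condition \eqref{f15a} ensures that $L_N$ preserves the space of affine functions of $\eta$, so that stationarity, $E_{\mu_N}[L_N\eta_k]=0$, closes into a finite linear system for $\rho_N$. A direct computation using \eqref{02}, \eqref{03} and the definition of $L_{0,1}$ shows that $\rho_N$ is discrete harmonic on $\{1,\dots,N-2\}$, with prescribed value $\rho_N(0)$ at $0$ and discrete Robin relation $\rho_N(N-1)-\rho_N(N-2)=\beta-\rho_N(N-1)$ at $N-1$; this forces $\rho_N(k)=\rho_N(0)+(\beta-\rho_N(0))\,k/N$ for $0\le k\le N-1$. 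Since $\bar u(x)=\rho(0)+(\beta-\rho(0))\,x$ solves \eqref{07}, and $|\rho_N(0)-\rho(0)|\le C_0/N$ by Lemma \ref{ll03}, we obtain $\max_k|\rho_N(k)-\bar u(k/N)|\le C_0/N$.

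Proof of (ii), the heart of the matter. Condition \eqref{f15a} together with Lemma \ref{ll05} guarantees that $L_N$ also preserves the space of polynomials of degree $\le 2$; equivalently, the exclusion process admits a two-particle dual --- two labelled sites performing stirring in the bulk, excluding one another, absorbed with the appropriate weight at the $\beta$-reservoir, and evolving under the dual of $L_l$ on the left block. Stationarity then closes $\psi_N(k,l)=E_{\mu_N}[\eta_k\eta_l]$, $-p\le k<l\le N-1$, into a finite linear system, and subtracting $\rho_N\otimes\rho_N$ while using the first-moment equations of (i) to cancel the bilinear terms, one finds $\mathcal A_N\varphi_N=b_N$, where $\mathcal A_N$ is (minus) the generator of the killed two-particle dual on the ordered pairs $\{-p\le k<l\le N-1\}$ and the source $b_N$ is concentrated near the diagonal and the boundaries. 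On the bulk near-diagonal $\{l=k+1,\ k\ge1\}$, $b_N$ is a product of two discrete gradients of the affine profile $\rho_N$, hence $|b_N|=O(N^{-2})$; on the right edge $\{l=N-1\}$, $|b_N|=O(N^{-1})$ since $|\beta-\rho_N(N-1)|=O(1/N)$; and $b_N$ vanishes on the interior configurations $\{k\le0<l\}$ away from the left block, the bilinear terms cancelling there exactly by the first-moment equations for $L_l$. Representing $\varphi_N=\mathcal A_N^{-1}b_N$ through the Green's function $G_N$ of the killed dual, the two-dimensional bound $G_N\le C\log N$ and the maximum principle applied to the right-edge data show that the contributions of these two pieces of $b_N$ to $\varphi_N$ are $O(N^{-1}\log N)$, hence negligible.

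The remaining, and genuinely delicate, contribution --- this is the main obstacle --- is the one near the nonreversible left boundary, where $\rho_N$ has gradients of order $1$ and $b_N$ is of order $1$ on the $O(1)$ many configurations lodged in the finite left block. Here one uses the strengthened hypothesis $\sum_j r_j>0$: because the left block is finite and contains a genuine reservoir, a dual particle in the left block is absorbed at a rate bounded below uniformly in $N$, so that two dual particles can coexist near the left boundary only for a time of order $1$, during which the surviving particle's absorption weight (an essentially discrete-harmonic function of its position) changes by $O(1/N)$ per step. Equivalently, at the level of the linear system, the restriction of $\mathcal A_N$ to configurations with a particle in the left block is uniformly invertible in $N$ under $\sum_j r_j>0$ --- a property that can fail under \eqref{f15} alone, since the anti-copy generator $L_A$ contributes terms of indefinite sign. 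This yields $|\varphi_N(k,l)|\le C/N$ for all $1\le k<l\le N-1$, which are precisely the pairs entering the sum in (ii) (the $O(1)$ pairs with $k\le 0$ are excluded and bounded by $1$). Therefore $N^{-2}\sum_{1\le k,l\le N-1}|\varphi_N(k,l)|=O(1/N)$, which proves (ii) and hence Theorem \ref{mt0}.
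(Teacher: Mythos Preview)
Your reduction to an $L^2$ bound via Cauchy--Schwarz and the treatment of the one-point function in part (i) are correct and coincide with the paper's argument (Lemma~\ref{ll03}).  The gap lies in part (ii), in the ``genuinely delicate'' contribution near the left boundary.

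Your claim that $|\varphi_N(k,l)|\le C/N$ uniformly over $1\le k<l\le N-1$ is not established by the argument you give, and in fact it is \emph{false} in general whenever the copy generator $L_C$ is nontrivial.  The point you miss is that the two-particle dual is not merely killed at the left reservoirs: the $\ms L^\dagger_C$ (and $\ms L^\dagger_A$) part sends the pair $(\sigma,j,k)$, when both $j,k\in\Lambda_p^*$, to a \emph{diagonal} absorbing state $(\sigma,m,m)$ with boundary value $b_N(\sigma,m,m)=\sigma\,\rho_N(m)[1-\rho_N(m)]$, which is of order~$1$.  Starting the dual from $(1,1,2)$, both coordinates enter the left block $\Lambda_p^*$ with probability bounded below uniformly in $N$, and from there the diagonal is hit with probability bounded below.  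Hence $\varphi_N^{(2)}(1,1,2)$ is bounded away from zero; the ``uniform invertibility'' of $\mathcal A_N$ on left-block configurations gives you at best $|\varphi_N|=O(1)$ there, not $O(1/N)$.  Your sentence about the surviving particle's absorption weight changing by $O(1/N)$ per step is a statement about the uncentered two-point function, not about the centered correlation, and does not yield the claimed cancellation.

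The paper avoids this obstruction by proving only the weaker estimate
\[
\lim_{N\to\infty}\ \max_{\substack{(j,k)\in\bb D_N\\ j>\delta N}}\ |\varphi_N(1,j,k)|\;=\;0
\]
for each fixed $\delta>0$ (Lemmata~\ref{l02} and~\ref{l06}).  The key ingredient is a random-walk estimate (Lemma~\ref{l08}): starting from $(1,j,k)$ with $j>\delta N$, the dual hits the set $\Sigma=\{(\sigma,l,0):-p\le l<0\}$ --- the only gateway to the dangerous diagonal --- with probability $o(1)$.  The proof of this uses the two-dimensional harmonic structure to show that the dual first hits the line $\{j=0\}$ at a point with second coordinate $\gg1$, and then a geometric killing argument (each visit of the first particle to $\Lambda_p^*$ gives a uniformly positive chance of absorption before the second particle catches up).  One then restricts to $G$ with compact support in $(0,1)$, so that the sum in (ii) only involves $j>\delta N$, and concludes by density of such $G$ in $C([0,1])$.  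Your argument needs this estimate (or an equivalent one) to close; the uniform $C/N$ bound cannot be salvaged.
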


We refer to Section \ref{sec01} for the notation used in the next
remark.

\begin{remark}
\label{rm0}
We believe that Theorem \ref{mt0} remains in force if
$\sum_{j\in\Lambda^*_p} r_j=0$ and $\sum_{j, k\in \Lambda_p^*} a_{j,k}
>0$. This assertion is further discussed in Remark \ref{rm0bis}.
\end{remark}

\begin{remark}
\label{rm6}
The case $\sum_{j\in\Lambda^*_p} r_j + \sum_{j,k\in \Lambda_p^*}
a_{j,k} =0$ provides an example in which at the left boundary sites
behave as a voter model and acquire the value of one of their
neighbors.  One can generalize this model and consider an exclusion
process in which, at the left boundary, the first site takes the value
of the majority in a fixed interval $\{2, \dots, 2p\}$, the left
boundary generator being given by
\begin{equation*}
(L_l f)(\eta) \;=\; f(M\eta) - f(\eta)\;,
\end{equation*}
where $(M\eta)_k = \eta_k$ for $k\ge 2$, and
$(M\eta)_1 = \bs 1\{\sum_{2\le j\le 2p} \eta_j \ge p\}$. In this case it
is conceivable that the system alternates between two states, one in
which the left density is close to $1$ and one in which it is close to
$0$.
\end{remark}


The proof of Theorem \ref{mt0} is presented in Sections \ref{sec01} and
\ref{sec03}. It relies on duality computations. As the boundary
conditions do not increase the degrees of a function, the equations
obtained from the identities $E_{\mu_N}[L_N \eta_j]=0$, $E_{\mu_N}[L_N
\eta_j \eta_k]=0$ can be expressed in terms of the density and of the
correlation functions.  

\subsection{Small perturbations of flipping dynamics}
\label{sec:Model}

We examine in this subsection a model in which the rate at which the
leftmost occupation variable is flipped depends locally on the
configuration.  Consider the generator
\begin{equation}
\label{f13}
L_N \;=\; L_{l} \,+\, L_{b,N} \,+\, L_{r,N} \;,
\end{equation}
where $L_{b,N}$ and $L_{r,N}$ were defined in \eqref{02}, \eqref{03}. The
left boundary generator is given by
\begin{equation*}
(L_{l} f)(\eta) \;=\; c(\eta_1,\dots,\eta_p)\, [f(\sigma^{1}\eta)-f(\eta)] \;.
\end{equation*}
for some non-negative function $c:\{0,1\}^p\to \R_+$.
 
Let
\begin{equation}
\label{f12}
A\;=\; \min_{\xi\in\Omega_p} c(0, \xi)\;, \quad
B \;=\; \min_{\xi\in\Omega_p} c(1, \xi)
\end{equation}
be the minimal creation and annihilation rates, and denote by
\begin{equation*}
\lambda(0,\xi) \;:=\; c(0,\xi) \,-\, A
\;, \quad 
\lambda(1,\eta) \;:=\; c(1,\xi) \,-\, B
\end{equation*}
the marginal rates. We allow ourselves below a little abuse of
notation by considering $\lambda$ as a function defined on $\Omega_N$
and which depends on the first $p$ coordinates, instead of a function
defined on $\Omega_{p+1}$.  With this notation the left boundary
generator can be written as
\begin{equation*}
(L_{l} f)(\eta) \; =\;
\big[ A + (1-\eta_1)\, \lambda(\eta) \big] \; [f(T^1\eta)-f(\eta)]
\;+\; \big[B + \eta_1\, \lambda(\eta) \big]\; [f(T^0\eta)-f(\eta)]\;,
\end{equation*}
where for $a=0$, $1$,
\begin{equation*}
(T^a\eta)_k \;=\;
\begin{cases}
a & \text{if $k=1$,} \\
\eta_k & \text{otherwise.}
\end{cases}
\end{equation*}

The Markov chain with generator $L_N$ has a unique stationary state
because it is irreducible due to the stirring dynamics and the right
boundary condition.  Denote by $\mu_N$ the unique stationary state of
the generator $L_N$, and by $E_{\mu_N}$ the corresponding
expectation. Let $\rho_N(k) = E_{\mu_N}[\eta_k]$, $k\in\Lambda_N$.

\begin{theorem}
\label{mt1}
Suppose that 
\begin{equation}
\label{fc01}
(p-1)\, \sum_{\xi\in\Omega_p}
\{ \lambda(0,\xi)  + \lambda(1,\xi)  \} 
\;<\; A \,+\, B\;.
\end{equation}
Then, the limit
\begin{equation*}
\alpha:=\lim_{N\to \infty} \rho_N(1)
\end{equation*}
exists, and it does not depend on the boundary conditions at $N-1$.
Moreover, for any continuous function $G: [0,1]\to \bb R$,
\begin{equation*}
\lim_{N\to \infty} E_{\mu_N} \Big[ \, \Big|
\frac 1N \sum_{k=1}^{N-1} G(k/N) \, [\eta_k -
\bar u (k/N)] \, \Big|\, \Big] =0\;,
\end{equation*}
where $\bar u$ is the unique solution of the linear equation 
\eqref{07} with $\rho(0)=\alpha$.
\end{theorem}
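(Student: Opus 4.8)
The plan is to follow the same duality strategy announced for Theorem \ref{mt0}, but now dealing with the fact that the boundary generator $L_l$ does \emph{not} preserve degrees: the rate $c(\eta_1,\dots,\eta_p)$ genuinely depends on $p$ coordinates, so the closed hierarchy for one- and two-point functions is replaced by a hierarchy that couples $\rho_N$ and the correlations $\varphi_N(j,k) = E_{\mu_N}[\eta_j\eta_k]$ to higher-order terms localized near the left boundary. First I would write out the stationarity identities $E_{\mu_N}[L_N\eta_j]=0$. In the bulk and at the right boundary these give the usual discrete Laplacian relations; at site $1$ they read
\begin{equation*}
0 \;=\; A \,-\, (A+B)\,\rho_N(1) \;+\; E_{\mu_N}\big[(1-2\eta_1)\,\lambda(\eta)\big] \;+\; \text{(stirring terms)}\;,
\end{equation*}
so the departure from the degree-preserving case is entirely captured by the single error term $E_{\mu_N}[(1-2\eta_1)\lambda(\eta)]$, which is bounded in absolute value by $\sum_\xi\{\lambda(0,\xi)+\lambda(1,\xi)\}$.

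Next I would set up the analogue of Lemma \ref{ll03}: comparing $\rho_N$ with the harmonic profile interpolating between an unknown left value and $\beta$, one gets $\rho_N(k) = \bar u_N(k/N) + O(1/N)$ away from the boundary, where $\bar u_N$ solves \eqref{07} with $u(0)$ replaced by $\rho_N(1)$ (up to a correction of order $1/N$ from the finite left-boundary region). Thus the whole theorem reduces to showing that $\rho_N(1)$ converges, and then to an $L^1$-estimate on the fluctuations, exactly as in Theorem \ref{mt0}. For the convergence of $\rho_N(1)$ I would derive a \emph{closed} equation for the boundary quantities by summing the stationarity relations: letting $R_N = \sum_{k=1}^{N-1}\rho_N(k)$ telescoped appropriately, or better, using the harmonic function $h_N(k)=(N-k)/(N-1)$ as a test function so that $\sum_k h_N(k)(L_{b,N}+L_{r,N})\eta_k$ collapses, one obtains an identity expressing $\rho_N(1)-\beta$ (times a factor $\asymp 1/N$) in terms of the boundary flux $A-(A+B)\rho_N(1)+E_{\mu_N}[(1-2\eta_1)\lambda]$. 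Combined with a second relation coming from the $L_l$-contribution, this pins down $\rho_N(1)$ up to the error term; the limit point $\alpha$ is then the solution of the fixed-point equation $A-(A+B)\alpha + \lim_N E_{\mu_N}[(1-2\eta_1)\lambda(\eta)] = 0$, and one must show this limit exists.

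The main obstacle — and the place where hypothesis \eqref{fc01} enters — is precisely controlling the error term $E_{\mu_N}[(1-2\eta_1)\lambda(\eta)]$ and showing it converges. Here I would expand $\lambda(\eta)$ over $\xi\in\Omega_p$, write $(1-2\eta_1)\ind{(\eta_2,\dots,\eta_p)=\xi}$ as a polynomial of degree $\le p$ in $\eta_1,\dots,\eta_p$, and feed these monomials into the duality machinery: each monomial $\eta_{j_1}\cdots\eta_{j_m}$ evolves, under the dual (backward) dynamics, as a system of $m$ coalescing stirring random walks that are created/annihilated only through the boundary terms at rates governed by $A$, $B$ and the $\lambda$'s. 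The condition $(p-1)\sum_\xi\{\lambda(0,\xi)+\lambda(1,\xi)\} < A+B$ is a smallness condition guaranteeing that the ``bad'' boundary events (where a dual walk at site $1$ splits off extra walkers via $\lambda$) are dominated by the ``good'' killing events (at rate $A+B$), so that the dual particle system near the left boundary is positive-recurrent / geometrically ergodic uniformly in $N$. This yields: (i) a uniform bound on all boundary correlation functions, hence a Cauchy/compactness argument giving existence of $\alpha = \lim\rho_N(1)$; (ii) the fact that $\alpha$ does not depend on the right boundary condition, since the dual walks started near site $1$ reach the right reservoir only with probability $O(1/N)$; and (iii) after subtracting the harmonic profile $\bar u$, a second-moment estimate $E_{\mu_N}[(\frac1N\sum_k G(k/N)(\eta_k-\bar u(k/N)))^2] \to 0$, proved by expanding the square into two-point functions and using the comparison of $\varphi_N$ with the product $\bar u(j/N)\bar u(k/N)$ plus the harmonic correlation correction, exactly as for Theorem \ref{mt0}. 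The delicate estimate is (i): making the uniform ergodicity of the dual boundary dynamics quantitative under \eqref{fc01}, likely via a Lyapunov function counting the number of dual particles in $\{1,\dots,p\}$.
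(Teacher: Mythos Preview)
Your high-level picture is right: the density profile is harmonic (Lemma \ref{2-l5} in the paper), so everything reduces to (i) convergence of $\rho_N(1)$ and (ii) decay of the two-point correlations; and you correctly identify \eqref{fc01} as a subcriticality condition for the dual branching process near the left boundary, with $A+B$ the killing rate and $(p-1)\lambda$ the total mass creation rate. The comparison with a biased random walk on $\bb Z$ (your ``Lyapunov function counting dual particles'') is exactly how the paper bounds the total number of created particles (Lemma \ref{cl02}).

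The gap is in step (i). Your proposed route --- derive a stationarity identity expressing $\rho_N(1)$ in terms of $E_{\mu_N}[(1-2\eta_1)\lambda(\eta)]$, pass to the limit, and read off $\alpha$ from a fixed-point equation --- is circular: the error term is itself a boundary expectation of a degree-$p$ polynomial, and showing \emph{it} converges is at least as hard as showing $\rho_N(1)$ converges. Uniform bounds plus compactness only give subsequential limits; you have no mechanism to identify a unique limit or to show the sequence is Cauchy.

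The paper bypasses this entirely via a \emph{graphical construction and coupling}. One realizes the dynamics on $\Lambda_N$ and $\Lambda_M$ ($N<M$) using the \emph{same} Poisson point processes for stirring on $\{1,\dots,N-2\}$ and for all left-boundary marks, with independent right-boundary marks. Running the set-valued dual process $\ms A(s)$ backward from $\{1\}$, the two duals coincide as long as $\ms A_N(s)$ never reaches site $N-1$; by subcriticality (Lemmata \ref{cl02}--\ref{2-l2}) this happens with probability $1-o(1)$. On that event, Lemma \ref{cl01} says $\eta_1^N(0)=\eta_1^M(0)$, hence $|\rho_N(1)-\rho_M(1)|\le \bb Q_N[\ms A(s)\ni N-1 \text{ for some }s]\to 0$. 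This is what gives the Cauchy property directly, and the same argument shows independence from the right boundary. For step (ii) the paper uses a similar coupling (Lemma \ref{2-l6}) rather than a closed two-point hierarchy: one couples the dual started from $\{1,k\}$ with two independent duals started from $\{1\}$ and $\{k\}$, and shows they agree unless the two clouds of descendants meet, which again has vanishing probability for $k\gg 1$.
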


\begin{remark}
\label{rm3}
There is not a simple closed formula for the left density $\alpha$.
By coupling, it is proven that the sequence $\rho_N(1)$ is Cauchy and
has therefore a limit. The density $\rho_N(1)$ can be expressed in
terms of the dual process, a stirring dynamics with creation and
annihilation at the boundary.
\end{remark}

\begin{remark}
\label{rm2}
A similar result holds for boundary driven exclusion processes in
which particles are created at sites $1\le k\le q$ with rates
depending on the configuration through the first $p$ sites, provided
the rates depend weakly [in the sense \eqref{fc01}] on the
configuration.
\end{remark}

\begin{remark}
\label{rm4}
One can weaken slightly condition \eqref{fc01}. For $\zeta\in
\{0,1\}^q$, $0\le q \le p-1$, let $A(\zeta) = \min_\xi c(\zeta,\xi)$,
where the minimum is carried over all configurations $\xi\in
\{0,1\}^{p-q}$. For $a=0$, $1$, and $\zeta\in \cup_{0\le q\le p-1}
\{0,1\}^q$, let $R(\zeta,a) = A(\zeta,a) - A(\zeta)\ge 0$ be the
marginal rate. The same proof shows that the assertion of Theorem
\ref{mt1} holds if
\begin{equation*}
\sum_{q=2}^p (q-1) \sum_{\zeta\in \{0,1\}^q} R(\zeta) \,<\, A\,+\, B\;.
\end{equation*}
\end{remark}

\begin{remark}
\label{rm7}
In \cite{e17}, Erignoux proves that the empirical measure evolves in
time as the solution of the heat equation with the corresponding
boundary conditions.
\end{remark}

The proof of Theorem \ref{mt1} is presented in Section \ref{sec02}. It
is based on a duality argument which consists in studying the process
reversed in time. We show that under the conditions of Theorem
\ref{mt1}, to determine the value of the occupation variable $\eta_1$
at time $0$, we only need to know from the past the behavior of the
process in a finite space-time window.

\subsection{Speeded-up boundary condition}

Recall the notation introduced in Subsection \ref{ss1}. Fix $p>1$ and
consider an irreducible continuous-time Markov chain on $\Omega^*_p$,
$p>0$. Denote by $L_l$ the generator of this process, and by $\mu$ the
unique stationary state. Let
\begin{equation}
\label{f03b}
\rho (k)\;=\; E_{\mu}[\eta_k]\;, \quad k\in\Lambda^*_p\;,
\end{equation}
be the mean density at site $k$ under the measure $\mu$. Clearly,
$0< \rho (k) < 1$ for all $k\in\Lambda^*_p$. The density can not be
$0$ or $1$ because every configuration has a strictly positive weight
under the stationary measure.

Fix a sequence $\ell_N\to\infty$, and consider the boundary driven,
symmetric, simple exclusion process on $\Omega_{N,p}$ whose generator,
denoted by $L_{N}$, is given by
\begin{equation*}
L_{N} \;=\; \ell_N\, L_l \,+\, L_{0,1} \,+\,  L_{b,N} \,+\,  L_{r,N}\;,
\end{equation*}
where $L_{0,1}$ represent a stirring dynamics between sites $0$ and
$1$, introduced below \eqref{eq:LNDef}. Note that the left boundary
dynamics has been speeded-up by $\ell_N$.

Due to the right boundary reservoir and the stirring dynamics, the
process is ergodic.  Denote by $\mu_N$ the unique stationary state,
and let
\begin{equation*}
\rho_N(k)\;=\; E_{\mu_N}[\eta_k]\;, \quad k\in\Lambda_{N,p}\;,
\end{equation*}
be the mean density at site $k$ under the stationary state.

\begin{theorem}
\label{mt2}
There exists a finite constant $C_0$, independent of $N$, such that
$|\rho_N(0)-\rho(0)|\le C_0/\sqrt{\ell_N}$. Moreover, for any continuous
function $G:[0,1] \to \bb R$,
\begin{equation*}
\lim_{N\to \infty} E_{\mu_N} \Big[ \, \Big|
\frac 1N \sum_{k=1}^{N-1} G(k/N) \, [\eta_k -
\bar u (k/N)] \, \Big|\, \Big] =0\;,
\end{equation*}
where $\bar u$ is the unique solution of the linear equation
\eqref{07}. 
\end{theorem}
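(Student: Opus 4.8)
The plan is to follow the scheme of Theorems \ref{mt0} and \ref{mt1}: exploit the stationary identities $E_{\mu_N}[L_N f]=0$ for suitable $f$; the key new input is that the speed-up $\ell_N$ forces the left block $\{-p,\dots,0\}$ to relax instantaneously on the macroscopic scale, so that in the limit it behaves as a fixed reservoir of density $\rho(0)$. First I would test against functions $f=f(\eta_{-p},\dots,\eta_0)$ depending only on the block: then $L_{b,N}f=L_{r,N}f=0$, so $\ell_N\,E_{\mu_N}[L_l f]+E_{\mu_N}[L_{0,1}f]=0$, and the marginal $\bar\mu_N$ of $\mu_N$ on $\Omega^*_p$ obeys $|E_{\bar\mu_N}[L_l f]|\le 2\|f\|_\infty/\ell_N$ for every $f$. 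Since $L_l$ is irreducible, the operator it induces on mean-zero signed measures is invertible, so $\|\bar\mu_N-\mu\|=O(\ell_N^{-1})$; in particular $|\rho_N(0)-\rho(0)|=O(\ell_N^{-1})$, which is stronger than the claimed bound. Next, because $L_l\eta_k=0$ for $k\ge1$, $L_{b,N}\eta_k$ is the discrete Laplacian of $\eta$, and $L_{0,1}\eta_k$, $L_{r,N}\eta_{N-1}$ are affine, the identities $E_{\mu_N}[L_N\eta_k]=0$ for $1\le k\le N-1$ say exactly that $\rho_N$ is discrete harmonic on $\{1,\dots,N-1\}$ with boundary values $\rho_N(0)$ at site $0$ and $\beta$ at a ghost site $N$; hence $\rho_N(k)=(1-\tfrac kN)\rho_N(0)+\tfrac kN\beta$ and $\sup_{1\le k\le N-1}|\rho_N(k)-\bar u(k/N)|\le|\rho_N(0)-\rho(0)|\to0$, with $\bar u$ the solution of \eqref{07}. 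It then remains to show that $E_{\mu_N}\big[(\tfrac1N\sum_{k=1}^{N-1}G(k/N)\,\zeta_k)^2\big]\to0$, where $\zeta_k:=\eta_k-\rho_N(k)$.

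Writing $\varphi_N(j,k):=E_{\mu_N}[\zeta_j\zeta_k]$, this expectation equals $\tfrac1{N^2}\sum_{j,k}G(\tfrac jN)G(\tfrac kN)\varphi_N(j,k)$, whose diagonal part is $O(1/N)$, so it suffices to prove $\sup_{1\le j\ne k\le N-1}|\varphi_N(j,k)|\to0$. Expanding $E_{\mu_N}[L_N(\eta_j\eta_k)]=0$ and using the harmonicity of $\rho_N$, one finds that on the triangle $1\le j<k\le N-1$ the function $\varphi_N$ solves the two-dimensional discrete Laplace equation with a source of order $(\beta-\rho_N(0))^2/N^2$ along the diagonal $k=j+1$, the homogeneous condition $\varphi_N(j,N)=0$ on the right edge, and, on the edge $j=1$, an equation involving the block–bulk correlations $\varphi_N(0,k)$ as boundary data. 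By the discrete maximum principle, $\sup_{1\le j<k\le N-1}|\varphi_N(j,k)|\le C\big(\sup_{1\le k\le N-1}|\varphi_N(0,k)|+N^{-1}\log N\big)$, and the whole argument reduces to bounding the block–bulk correlations.

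To estimate these, set $\psi_k:=E_{\mu_N}[\zeta_k\mid \eta_{-p},\dots,\eta_0]$, a function on $\Omega^*_p$, so that $|\varphi_N(0,k)|\le\|\psi_k\|$ (norms taken in $L^2(\mu)$). Testing $E_{\mu_N}[L_N(h\,\eta_k)]=0$ against all block functions $h$ and subtracting the identity $\ell_N E_{\bar\mu_N}[L_l h]=-E_{\mu_N}[L_{0,1}h]$ — which cancels the \emph{a priori} $O(\ell_N)$ contributions — leaves, for bulk $k$, a discrete equation of the form $\ell_N L_l^{*}\psi_k+(\psi_{k-1}-2\psi_k+\psi_{k+1})=R_k$, where $L_l^*$ is the $L^2(\mu)$-adjoint of $L_l$ and $R_k$ is a coupling term that, up to lower-order errors, is localized near $k=1$. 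Pairing with $\psi_k$, summing over $k$, and using that the symmetrization $\tfrac12(L_l+L_l^*)$ has spectral gap $\gamma>0$ on mean-zero functions, the large zeroth-order term gives $\ell_N\gamma\sum_k\|\psi_k\|^2\le C$, hence $\|\psi_k\|^2\le C/\ell_N$ and $\sup_{1\le k\le N-1}|\varphi_N(0,k)|\le C\ell_N^{-1/2}$. Plugging this into the bound of the previous paragraph gives $\sup_{1\le j\ne k\le N-1}|\varphi_N(j,k)|\le C(\ell_N^{-1/2}+N^{-1})\to0$, so $E_{\mu_N}\big[(\tfrac1N\sum_k G(k/N)\zeta_k)^2\big]\to0$ and, combined with the density estimate, the theorem follows.

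The hard part is the analysis of the coupling term $R_k$. Since the bond $L_{0,1}$ is \emph{not} sped up, the stationarity identities tie $\psi_k$ not only to $\psi_{k\pm1}$ but, through $L_{0,1}(h\eta_k)$, to the occupation variable $\eta_1$ and to correlations supported on $\{-p,\dots,1\}$; one must organize these into the correct linear system indexed by $k$ and valued in functions on $\Omega^*_p$, show that the off-diagonal contributions are of lower order — they produce only a boundary layer of width $\ell_N^{-1/2}$, which is the origin of the $\sqrt{\ell_N}$ rate — and deal carefully with the corner where both $j$ and $k$ are close to $1$.
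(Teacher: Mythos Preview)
Your overall strategy matches the paper's: test block functions to control $\rho_N(0)$, use discrete harmonicity for the profile, reduce bulk correlations to the block--bulk correlations $\varphi_N(0,k)$ via the random-walk/maximum-principle representation, and finally control $\varphi_N(0,k)$ from the identity $E_{\mu_N}[L_N(h\eta_k)]=0$. In fact your first step is cleaner than the paper's: you invert $L_l$ directly on mean-zero signed measures and obtain $\|\bar\mu_N-\mu\|=O(\ell_N^{-1})$, whereas the paper passes through the Dirichlet form of the block marginal density $F_N$ and only extracts $O(\ell_N^{-1/2})$.

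The substantive divergence is in the block--bulk step, and there your sketch has a gap. The paper does \emph{not} set up a coupled system in $k$; it treats each $k\ge 2$ separately. Because $h$ depends only on $\{-p,\dots,0\}$ and $\eta_k$ sits at $k\ge 2$, the Leibniz rule $L_N(h\eta_k)=\eta_k L_N h + h L_N\eta_k$ holds exactly, and the single identity $E_{\mu_N}[L_N(h\eta_k)]=0$ already yields $|\int (L_l h)\,G_N^{(k)}\,d\mu|\le C\|h\|_\infty/\ell_N$ with $G_N^{(k)}(\xi)=\int \eta_k f_N(\xi,\eta)\,\nu^N_u(d\eta)$. From this one gets $\|G_N^{(k)}-\rho_N(k)\|\le C/\sqrt{\ell_N}$ (and in fact $C/\ell_N$ by the same invertibility argument you used for the density), hence $|\varphi_N(0,k)|\le C/\sqrt{\ell_N}$ for each $k$ individually. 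No summation over $k$, no spectral-gap energy estimate, no boundary-layer analysis.

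By contrast, your coupled-system approach needs the remainder $R_k$ to be summable in $k$, and your claim that it is ``localized near $k=1$'' is not justified. After the cancellation you describe, the residual term is $E_{\mu_N}[\zeta_k\,L_{0,1}h]$, which involves the joint law of $\eta_k$, the block, \emph{and} $\eta_1$; it is present for every $k$ and is a priori only $O(\|h\|)$, so $\sum_k\langle R_k,\psi_k\rangle$ is not obviously bounded. This is precisely the difficulty you flag as ``the hard part'', and the paper sidesteps it entirely by never summing in $k$. I would replace your step 5 by the paper's per-$k$ argument; the rest of your outline then goes through.
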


\begin{remark}
\label{rm5}
The proof of this theorem is based on duality computations, and does
not requires one and two-blocks estimates. There is an alternative
proof relying on an estimate of the entropy production along the lines
presented in \cite[Proposition 2]{els90}, \cite[Proposition
3.3]{lov98}. This proof applies to gradient and non-gradient models
\cite{klo95}, but it requires $\ell_N$ to grow at least as $N$.
\end{remark}

The proof of Theorem \ref{mt2} is presented in Section \ref{sec5}.  As
the boundary condition has been speeded-up, each time the occupation
variables $\eta_0$, $\eta_1$ are exchanged, the distribution of the
variable $\eta_0$ is close to its stationary distribution with respect
to the left-boundary dynamics.

\section{Proof of Theorem \ref{mt0}: one point functions}
\label{sec01}

We prove in this section that the density of particles under the
stationary state $\mu_N$ is close to the solution of the linear
parabolic equation \eqref{07}.  We first show that the left boundary
dynamics we consider is indeed the most general one which does not
increase the degree of functions of degree $1$ and $2$.

For $A\subset \Lambda^*_p$, let $\Psi_A : \Omega^*_p \to \bb R$ be
given by $\Psi_A(\eta) = \prod_{k\in A} \eta_k$.  Clearly, any
function $f: \Omega^*_p \to \bb R$ can be written as a linear
combination of the functions $\Psi_A$. A function $f$ is said to be a
monomial of order $n$ if it can be written as a linear combination of
functions $\Psi_A$ where $|A|=n$ for all $A$. It is said to be a
polynomial of order $n$ if it can be written as a sum of monomials of
order $m\le n$.

Recall the definition of the generator $L_G$ given in \eqref{f16}. Fix
$-p\le k\le 0$, and write the jump rate $c_k$ as
\begin{equation*}
c_k \;=\; \sum_{A\subset \Lambda_p^*} R_{k,A} \, \Psi_A\;,
\end{equation*}
where the sum is carried over all subsets $A$ of $\Lambda_p^*$.

\begin{lemma}
\label{ll01}
The functions $L_G \Psi_{\{j\}}$, resp. $L_G \Psi_{\{j,k\}}$, $-p\le
j\not = k\le 0$, are polynomials of order $1$, resp. of order $2$, if
and only if there exists constants $R_{l,\varnothing}$, $R_{l,\{m\}}$,
$l$, $m\in \Lambda_p^*$ such that
\begin{equation}
\label{f01}
c_j(\eta) \;=\; R_{j,\varnothing} \;+\; R_{j,\{j\}}\, \eta_j  \;+\; 
\sum_{k: k\not = j} R_{j,\{k\}}\, \eta_k \, (1-2\eta_j)\;.
\end{equation}
\end{lemma}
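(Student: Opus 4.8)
The plan is to compute the action of $L_G$ on the monomials $\Psi_{\{j\}}$ and $\Psi_{\{j,k\}}$ directly in the basis $\{\Psi_A\}$, and to read off the constraints on the coefficients $R_{j,A}$. Since the flip $\sigma^k$ changes the variable $\eta_j$ only when $k=j$, in which case it replaces $\eta_j$ by $1-\eta_j$, one has $\Psi_{\{j\}}(\sigma^k\eta)=\Psi_{\{j\}}(\eta)$ for $k\neq j$, whereas $\Psi_{\{j\}}(\sigma^j\eta)-\Psi_{\{j\}}(\eta)=1-2\eta_j$. Hence
\[
L_G \Psi_{\{j\}} \;=\; c_j(\eta)\,(1-2\eta_j)\;,
\]
and, similarly, for $j\neq k$,
\[
L_G \Psi_{\{j,k\}} \;=\; c_j(\eta)\,\eta_k\,(1-2\eta_j) \;+\; c_k(\eta)\,\eta_j\,(1-2\eta_k)\;.
\]

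For the ``only if'' direction I would substitute $c_j=\sum_{A\subset\Lambda_p^*}R_{j,A}\Psi_A$ into the first identity. Using $\eta_j^2=\eta_j$, one has $\Psi_A(1-2\eta_j)=-\Psi_A$ when $j\in A$, and $\Psi_A(1-2\eta_j)=\Psi_A-2\Psi_{A\cup\{j\}}$ when $j\notin A$. Collecting the coefficient of a fixed monomial $\Psi_B$ in $L_G\Psi_{\{j\}}$, I find it equals $R_{j,B}$ if $j\notin B$ and $-R_{j,B}-2R_{j,B\setminus\{j\}}$ if $j\in B$. Demanding that $L_G\Psi_{\{j\}}$ be a polynomial of order $1$ forces all these coefficients to vanish for $|B|\ge 2$: the case $j\notin B$ gives $R_{j,B}=0$, and feeding this back into the case $j\in B$ gives $R_{j,B}=0$ for $|B|\ge 3$ and $R_{j,\{j,k\}}=-2\,R_{j,\{k\}}$ for every $k\neq j$. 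Substituting these relations into $c_j=\sum_A R_{j,A}\Psi_A$ and using $\eta_k-2\eta_j\eta_k=\eta_k(1-2\eta_j)$ yields exactly \eqref{f01}, with $R_{l,\varnothing}$ and $R_{l,\{m\}}$ the surviving coefficients.

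For the ``if'' direction I would assume \eqref{f01} and verify both degree bounds. The key point is the identity $(1-2\eta_j)^2=1$, valid because $\eta_j\in\{0,1\}$; combined with $\eta_j(1-2\eta_j)=-\eta_j$ it gives
\[
c_j(\eta)\,(1-2\eta_j)\;=\;R_{j,\varnothing}(1-2\eta_j)\,-\,R_{j,\{j\}}\,\eta_j\,+\,\sum_{k\neq j}R_{j,\{k\}}\,\eta_k\;,
\]
which is affine, so $L_G\Psi_{\{j\}}$ is a polynomial of order $1$; multiplying this affine expression by $\eta_k$ and symmetrizing in $j,k$ then shows that $L_G\Psi_{\{j,k\}}$ is a polynomial of order $2$.

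There is no genuine obstacle here: the whole argument is a bounded computation, and the only point requiring care is the bookkeeping of which monomial $\Psi_B$ receives a contribution from which term in the expansion of $c_j(1-2\eta_j)$. It is worth noting that the order-$1$ requirement on $L_G\Psi_{\{j\}}$ already pins down $c_j$ by itself; the hypothesis on $L_G\Psi_{\{j,k\}}$ is automatically satisfied once \eqref{f01} holds, and is retained only to present the statement as an equivalence.
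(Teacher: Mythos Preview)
Your proposal is correct and follows essentially the same route as the paper: both compute $L_G\Psi_{\{j\}}=c_j(1-2\eta_j)$, expand $c_j$ in the basis $\{\Psi_A\}$, read off the coefficient of each $\Psi_B$, and conclude that the degree-$1$ constraint alone forces \eqref{f01}; the degree-$2$ statement for $L_G\Psi_{\{j,k\}}$ is then checked by direct substitution. Your bookkeeping of the coefficients (distinguishing $j\in B$ from $j\notin B$ and deriving $R_{j,\{j,k\}}=-2R_{j,\{k\}}$) is in fact slightly more explicit than the paper's, and your closing remark that the order-$2$ hypothesis is automatically implied by the order-$1$ one is a correct observation the paper leaves implicit.
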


\begin{proof}
Fix $j\in \Lambda_p^*$. A straightforward computation shows that
\begin{equation*}
L_G \Psi_{\{j\}} \;=\; \sum_{A\not\ni j}  R_{j,A} \, \Psi_A \;-\;
\sum_{A\not\ni j}  (2 R_{j,A} + R_{j,A\cup \{j\}}) \, \Psi_{A\cup \{j\}}\;.
\end{equation*}
Hence, $L_G \Psi_{\{j\}}$ is a polynomial of order $1$ if and only if
$R_{j,B} = R_{j,B\cup \{j\}} = 0$ for all $B\subset \Lambda_p^*$ such
that $|B|\ge 2$, $j\not \in B$. This proves that $L_G \Psi_{\{j\}}$ is
a polynomial of order $1$ if and only if condition \eqref{f01} holds.

If the rates are given by \eqref{f01}, for all $j\not = k \in
\Lambda_p^*$, 
\begin{equation*}
(L_G \Psi_{\{j\}})(\eta) \; =\; R_{j ,\varnothing} \,(1- 2 \eta_j)
\;-\; R_{j,\{j\}} \, \eta_j
\;+\;  \sum_{\ell: \ell \not = j} R_{j,\{\ell\}}\, \eta_\ell \;,
\end{equation*}
and 
\begin{align*}
(L_G \Psi_{\{j,k\}})(\eta) \; & =\; R_{j ,\varnothing} \,(1- 2 \eta_j)\, \eta_k
\;+\; R_{k,\varnothing} \,(1- 2 \eta_k)\, \eta_j
\;-\; \big( R_{j,\{j\}} + R_{k,\{k\}} \big) \, \eta_j \eta_k \\
\;& +\;  \sum_{\ell: \ell \not = j, k} R_{j,\{\ell\}}\, \eta_k \, \eta_\ell
\;+\; \sum_{\ell: \ell \not = j, k} R_{k,\{\ell\}}\, \eta_j \, \eta_\ell \;,
\end{align*}
which is a polynomial of degree $2$. This proves the lemma.
\end{proof}

\noindent{\bf Note:} Observe that at this point we do not make any
assertion about the sign of the constants $R_{j ,\varnothing}$,
$R_{j,\{k\}}$. \smallskip

The next result states that a generator $L_G$ whose rates satisfy
condition \eqref{f01} can be written as $L_R + L_C + L_A$.  Denote by
$\bb P_j$, resp. $\bb N_j$, $-p \le j\le 0$, the subset of points
$k\in \Lambda^*_p\setminus \{j\}$, such that $R_{j,\{k\}}\ge 0$,
resp. $R_{j,\{k\}}< 0$.

\begin{lemma}
\label{ll05}
The rates $c_j(\eta)$ given by \eqref{f01} are non-negative if and
only if
\begin{equation*}
\begin{gathered}
p_j \;:=\; R_{j,\varnothing}  \;+\; R_{j,\{j\}}\;-\;
\sum_{k\in \bb P_j} R_{j,\{k\}} \;\geq\; 0 \;, \\
q_j \;:=\; R_{j,\varnothing}\;+\;
\sum_{k\in \bb N_j} R_{j,\{k\}} \;\geq\; 0\; .
\end{gathered}
\end{equation*}
In this case, there exist non-negative rates $r_j$, $c_{j,k}$,
$a_{j,k}$ and densities $\alpha_j\in [0,1]$, $k\not = j\in
\Lambda_p^*$, such that for all $j\in \Lambda_p^*$, $\eta\in
\Omega^*_p$,
\begin{equation*}
\begin{aligned}
c_j(\eta) \; & =\; r_j\, \big[\, \alpha_j\, (1-\eta_j) \,+\,
(1-\alpha_j)\, \eta_j \,\big]
\;+\; \sum_{k\in \Lambda_p^*} c_{j,k} \, \big[\,
\eta_j\, (1-\eta_k) \,+\, \eta_k\, (1-\eta_j) \,\big]\; ,\\
\;& +\; \sum_{k\in \Lambda_p^*} a_{j,k} \, \big[\, \eta_j\, \eta_k 
\,+\, (1-\eta_k)\,(1-\eta_j) \,\big]\; .
\end{aligned}
\end{equation*} 
\end{lemma}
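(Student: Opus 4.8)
The plan is to reverse-engineer the decomposition from the structure of \eqref{f01}, treating the three terms $R_{j,\varnothing}$, $R_{j,\{j\}}\eta_j$, and $\sum_{k\neq j}R_{j,\{k\}}\eta_k(1-2\eta_j)$ separately and then recombining them into blocks of the form $L_R$, $L_C$, $L_A$. The key observation is the algebraic identity: for $k\neq j$,
\begin{equation*}
\eta_k(1-2\eta_j) \;=\; \big[\eta_k(1-\eta_j)\big] \;-\; \big[\eta_j(1-\eta_k)\big] \;-\; \big[\eta_j\eta_k - (1-\eta_k)(1-\eta_j)\big] \;+\; (1-\eta_j)
\end{equation*}
where I should double-check the constant by evaluating at the four values of $(\eta_j,\eta_k)$. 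More usefully, one has the two sign-dependent identities $\eta_k(1-2\eta_j) = [\eta_k(1-\eta_j)+\eta_j(1-\eta_k)] - [\eta_j(1-\eta_k)+\eta_j\eta_k]\cdot(\cdots)$; the clean statement is that a copying term $c_{j,k}[\eta_k(1-\eta_j)+\eta_j(1-\eta_k)]$ contributes $c_{j,k}(\eta_k - \eta_j)$ to the flip rate at $j$ after the standard linearization, while an anti-copying term $a_{j,k}[\eta_j\eta_k+(1-\eta_j)(1-\eta_k)]$ contributes $a_{j,k}(1-\eta_k-\eta_j)$, and $L_R$ at site $j$ contributes $r_j(\alpha_j - \eta_j\,\text{-stuff})$. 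So when $R_{j,\{k\}}\ge 0$ (i.e.\ $k\in\bb P_j$) I will absorb it into a copying rate, and when $R_{j,\{k\}}<0$ into an anti-copying rate, because $-\eta_k$ appears in the $L_A$ contribution.

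Concretely, I would proceed as follows. First, fix $j$ and write the target rate $c_j(\eta)$ from \eqref{f01}. Propose $c_{j,k} = R_{j,\{k\}}$ for $k\in\bb P_j$ and $a_{j,k} = -R_{j,\{k\}}$ for $k\in\bb N_j$ (and $c_{j,k}=a_{j,k}=0$ otherwise), which are non-negative by definition of $\bb P_j,\bb N_j$. Next, compute the sum of the corresponding $L_C$ and $L_A$ flip-rates at site $j$: this produces the $\eta_k$-dependent part of \eqref{f01} correctly, plus a leftover affine function of $\eta_j$ alone, namely a constant plus a multiple of $\eta_j$ (the constant being $\sum_{k\in\bb N_j}a_{j,k} = -\sum_{k\in\bb N_j}R_{j,\{k\}}$ from the $(1-\eta_k)(1-\eta_j)$ piece evaluated appropriately, and a $-\sum_{k\in\bb P_j}R_{j,\{k\}}\eta_j$-type piece from the copying terms). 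Then I must match the remaining affine-in-$\eta_j$ part, $R_{j,\varnothing} + R_{j,\{j\}}\eta_j$ minus whatever the $C,A$ blocks already produced, against $r_j[\alpha_j(1-\eta_j) + (1-\alpha_j)\eta_j] = r_j\alpha_j + r_j(1-2\alpha_j)\eta_j$. Writing $r_j\alpha_j = q_j$ (the value of $c_j$ at $\eta_j=0$ after removing the $C,A$ contributions, which equals the claimed $q_j$) and $r_j(1-\alpha_j) = p_j$ (the value at $\eta_j = 1$, equal to the claimed $p_j$), one recovers $r_j = p_j + q_j$ and $\alpha_j = q_j/(p_j+q_j)$, well-defined in $[0,1]$ precisely when $p_j\ge 0$ and $q_j\ge 0$ (if $p_j+q_j=0$ set $r_j=0$ and $\alpha_j$ arbitrary).

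For the converse direction of the ``if and only if'', I note that $c_j(\eta)$ restricted to the two configurations with $\eta_j=0$, all $\eta_k$ with $k\in\bb P_j$ set to $0$ and all $\eta_k$ with $k\in\bb N_j$ set to $1$, equals exactly $q_j$; and $c_j$ at the configuration with $\eta_j = 1$, $\eta_k = 1$ for $k\in\bb P_j$, $\eta_k=0$ for $k\in\bb N_j$ equals exactly $p_j$. Since these are attained values of the non-negative function $c_j$, non-negativity of $c_j$ forces $p_j\ge 0$, $q_j\ge 0$. This handles one implication; the explicit decomposition above handles the other, since a sum of rates times $\{0,1\}$-valued indicator-type brackets is automatically non-negative.

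The main obstacle I anticipate is purely bookkeeping: carefully verifying that the $\eta_j\eta_k$ cross-terms cancel correctly between the $L_C$ and $L_A$ contributions and that the residual is genuinely affine in $\eta_j$ with no surviving $\eta_k$ dependence — in other words, checking that the identity linking $\eta_k(1-2\eta_j)$ to the copying/anti-copying brackets is exact with the right constants. This is elementary but error-prone, so I would organize it by evaluating every bracket on the four points of $\{0,1\}^2$ in $(\eta_j,\eta_k)$ once and for all, and then argue that two polynomials of degree $\le 1$ in each variable agreeing on $\{0,1\}^2$ are equal. A secondary subtlety is making sure the $k=j$ diagonal is excluded consistently (the paper imposes $c_{j,j}=a_{j,j}=0$), so the $R_{j,\{j\}}\eta_j$ term must be routed entirely through the $L_R$ block, which the computation above does automatically since it is already affine in $\eta_j$ alone.
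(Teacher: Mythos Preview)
Your proposal is correct and follows exactly the same approach as the paper: define $c_{j,k}=R_{j,\{k\}}\bs 1\{k\in\bb P_j\}$, $a_{j,k}=-R_{j,\{k\}}\bs 1\{k\in\bb N_j\}$, $r_j=p_j+q_j$, $\alpha_j=q_j/(p_j+q_j)$, and verify the algebraic identity. The paper's proof is in fact even terser than yours---it simply writes down these four definitions and declares both the ``if and only if'' and the verification of the decomposition to be elementary; your evaluation of $c_j$ at the two extremal configurations to extract $p_j$ and $q_j$ is exactly the check the paper leaves to the reader.
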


\begin{proof}
The first assertion of the lemma is elementary and left to the reader.
For $j\not = k\in \Lambda_p^*$, define 
\begin{gather*}
c_{j,k} \;=\; R_{j,\{k\}} \, \bs 1\{k\in \bb P_j \}
\;\geq\;  0,\quad
a_{j,k} \;=\; -\, R_{j,\{k\}} \, \bs 1\{k\in\bb N_j\}
\;\geq \;0\;, \\
r_j\, :=\, p_j \,+\, q_j \;\geq\; 0\; ,
\quad \alpha_j \;:=\; \frac{q_j}{p_j+q_j} \, \bs 1\{r_j\neq 0\} \,\in\,
[0,1] \;.
\end{gather*}
It is elementary to check that the second assertion of the lemma holds
with these definitions.
\end{proof}

\begin{lemma}
\label{ll04}
The Markov chain induced by the generator $L_l$ has a unique
stationary state if $\sum_{j\in\Lambda^*_p} r_j + \sum_{j,
  k\in\Lambda^*_p} a_{j,k} >0$. In contrast, if
$\sum_{j\in\Lambda^*_p} r_j + \sum_{j,k\in\Lambda^*_p} a_{j,k} =0$ and
$\sum_{j,k\in\Lambda^*_p} c_{j,k} >0$, then the Markov chain induced
by the generator $L_l$ has exactly two stationary states which are the
Dirac measures concentrated on the configurations with all sites
occupied or all sites empty.
\end{lemma}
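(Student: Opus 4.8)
The plan is to reduce both assertions to a combinatorial analysis of the jump graph of the finite-state Markov chain generated by $L_l$ on $\Omega^*_p$. On a finite state space, the stationary measures of the chain are exactly the convex combinations of the (unique) stationary measures carried by its closed, i.e. recurrent, communicating classes; hence the first assertion is equivalent to the existence of a single closed communicating class, and the second to the existence of exactly two of them, which will turn out to be the singletons $\{\mathbf{0}\}$ and $\{\mathbf{1}\}$, where $\mathbf{0}$ and $\mathbf{1}$ denote the empty and the full configuration. The basic tool throughout is that the stirring part $L_S$ of $L_l$, which acts on the whole block $\{-p,\dots,0\}$, connects any two configurations with the same number of particles; thus every closed communicating class is a union of particle-number sectors, and one only has to determine which sectors are joined by the creation/annihilation moves contributed by $L_R$, $L_C$ and $L_A$, and whether $\mathbf{0}$ or $\mathbf{1}$ can be reached.

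For the first assertion I would exhibit a single configuration that is reachable from every configuration; its communicating class is then forced to be the unique closed one. This is done by cases. (i) If $a_{j,k}>0$ for some pair $j,k$, then from $\mathbf{0}$, from $\mathbf{1}$, or from an arbitrary configuration, one reaches --- by moving holes or particles onto the sites $j,k$ with $L_S$, flipping with $L_A$, and reshuffling again with $L_S$ --- a configuration with exactly one particle; the one-particle sector therefore lies in every closed class. (ii) If all $a_{j,k}=0$, then \eqref{f15} forces $r_j>0$ for some $j$; if such a reservoir has $\alpha_j\in(0,1)$, or if there are simultaneously a creating reservoir ($r>0$, $\alpha=1$) and an annihilating one ($r>0$, $\alpha=0$), then the chain is irreducible, since $L_S$ brings a hole (resp. a particle) onto the active site, which is then flipped, so every particle number is attainable and every configuration within a sector by stirring. (iii) Otherwise all active reservoirs create particles (the all-annihilating case being symmetric); then $\mathbf{1}$ is absorbing, and from any configuration one reaches $\mathbf{1}$ by repeatedly pushing a hole onto a creating site with $L_S$ and flipping it --- a path that uses only $L_S$ and $L_R$, and hence exists regardless of whether $L_C$ is present. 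In each case a configuration reachable from all others is produced, so there is a unique closed communicating class.

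For the second assertion only $L_S$ and $L_C$ survive. Both $\mathbf{0}$ and $\mathbf{1}$ are absorbing, since $L_S$ fixes them and the voter rates $c_{j,k}\,[\eta_j(1-\eta_k)+\eta_k(1-\eta_j)]$ vanish on constant configurations; hence $\delta_{\mathbf{0}}$ and $\delta_{\mathbf{1}}$ are stationary. It then suffices to show that every other configuration is transient. Fixing a pair $j_0,k_0$ with $c_{j_0,k_0}>0$, and given $\eta\neq\mathbf{1}$ --- so $\eta$ has a hole, and, if also $\eta\neq\mathbf{0}$, a particle --- one uses $L_S$ to place a particle at $j_0$ and a hole at $k_0$ and then applies the voter flip at $(j_0,k_0)$, which destroys one particle; iterating this drives $\eta$ to $\mathbf{0}$, which is absorbing. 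Consequently no configuration other than $\mathbf{0}$ and $\mathbf{1}$ is recurrent, the only closed communicating classes are $\{\mathbf{0}\}$ and $\{\mathbf{1}\}$, and every stationary measure is a convex combination of $\delta_{\mathbf{0}}$ and $\delta_{\mathbf{1}}$.

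The only mildly delicate point is case (iii): one must check that, with particle-creating reservoirs only, the configuration can really be completely filled using nothing but stirring and those creations, and it is precisely there that the fact that the stirring acts on the entire block $\{-p,\dots,0\}$ is used. Everything else amounts to short reachability verifications in the jump graph.
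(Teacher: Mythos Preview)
Your proof is correct and follows essentially the same strategy as the paper: use the stirring dynamics to place particles and holes at prescribed sites, then use the available flips ($L_R$, $L_A$, or $L_C$) to increase or decrease the particle count, thereby exhibiting a single state (or, in the degenerate case, the two absorbing states $\mathbf 0$ and $\mathbf 1$) reachable from every configuration. The only cosmetic difference is the order of the case split---the paper treats $\sum_j r_j>0$ first (noting that if $\alpha_j>0$ one reaches $\mathbf 1$, otherwise $\mathbf 0$) and then $\sum_j r_j=0$, $\sum_{j,k}a_{j,k}>0$, whereas you branch on $\sum_{j,k}a_{j,k}>0$ first and give a finer subdivision of the reservoir-only case; both decompositions are complete and rest on the same reachability mechanism.
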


\begin{proof}
Assume first that $\sum_{j\in\Lambda^*_p} r_j >0$.  Let $j\in
\Lambda_p^*$ such that $r_j>0$. If $\alpha_j> 0$, the configuration in
which all sites are occupied can be reached from any configuration by
moving with the stirring dynamics each empty site to $j$, and then
filling it up with the reservoir. This proves that under this
condition there exists a unique stationary state concentrated on
the configurations which can be attained from the configuration in which
all sites are occupied. Analogously, if $\alpha_j=0$, the
configuration in which all sites are empty can be reached from any
configuration. 

Suppose that $\sum_{j\in\Lambda^*_p} r_j =0$ and $\sum_{ j,k
  \in\Lambda^*_p} a_{j,k} >0$. We claim that from any configuration we
can reach any configuration whose total number of occupied sites is
comprised between $1$ and $|\Lambda^*_p|-1 = p$. Since the stirring
dynamics can move particles and holes around, we have only to show
that it is possible to increase, resp. decrease, the number of
particles up to $|\Lambda^*_p|-1$, resp. $1$.

Let $k\not = j\in \Lambda_p^*$ such that $a_{j,k} >0$. To increase the
number of particles up to $|\Lambda^*_p|-1$, move the two empty sites
to $j$ and $k$, and create a particle at site $j$. Similarly one can
decrease the number of particles up to $1$. This proves that under the
previous assumptions there exists a unique stationary state
concentrated on the set of configurations whose total number of
particles is comprised between $1$ and $|\Lambda^*_p|-1$.

Assume that $\sum_{j\in\Lambda^*_p} r_j =0$, $\sum_{j,
  k\in\Lambda^*_p} a_{j,k} =0$ and $\sum_{j, k\in\Lambda^*_p} c_{j,k}
>0$. In this case, the configuration with all sites occupied and the
one with all sites empty are absorbing states.  Let $k\not = j\in
\Lambda_p^*$ such that $c_{j,k} >0$. If there is at least one
particle, to increase the number of particles, move the empty site to
$j$, the occupied site to $k$, and create a particle at site
$j$. Similarly, we can decrease the number of particle if there is at
least one empty site.  This proves that in this case the set of
stationary states is a pair formed by the configurations with all
sites occupied and the one with all sites empty.
\end{proof}

\begin{lemma}
\label{ll02}
Suppose that $\sum_{j\in\Lambda^*_p} r_j + \sum_{j, k\in\Lambda^*_p}
a_{j,k}>0$. Then, there exists a unique solution to \eqref{f05a}.
\end{lemma}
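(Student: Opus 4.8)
The plan is to regard \eqref{f05a} as a square linear system: the unknowns are the $p+1$ numbers $\rho(-p),\dots,\rho(0)$ and there are $p+1$ equations, one for each $j\in\Lambda^*_p$. Consequently existence and uniqueness of a solution are equivalent, and it suffices to check that the associated homogeneous system has only the trivial solution. (Existence is in any case guaranteed independently: by Lemma \ref{ll04}, under \eqref{f15} the boundary chain has a unique stationary state $\mu$, and the computation leading to \eqref{f05a} shows that $\rho(k)=E_\mu[\eta_k]$ solves the system.) Collecting in \eqref{f05a} the two terms that do not depend on $\rho$, namely $r_j\alpha_j$ and $\sum_k a_{j,k}$, the homogeneous system is
\[
0 \;=\; -\,r_j\,\rho(j) \;+\; \sum_{k\in\Lambda^*_p} c_{j,k}\,[\rho(k)-\rho(j)] \;-\; \sum_{k\in\Lambda^*_p} a_{j,k}\,[\rho(k)+\rho(j)] \;+\; (\ms T\rho)(j), \qquad j\in\Lambda^*_p.
\]

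The argument I would use is a discrete maximum principle. Set $M=\max_{j\in\Lambda^*_p}|\rho(j)|$ and, replacing $\rho$ by $-\rho$ if necessary (the homogeneous system is linear), fix $j^*$ with $\rho(j^*)=M$. Evaluating the homogeneous equation at $j^*$, the four contributions $-r_{j^*}M$, $\sum_k c_{j^*,k}[\rho(k)-M]$, $-\sum_k a_{j^*,k}[\rho(k)+M]$ and $(\ms T\rho)(j^*)$ are all nonpositive: the first because $M\ge 0$; the second and fourth because $\rho(k)\le M$ and every path-neighbour of $j^*$ has $\rho\le M$; the third because $\rho(k)\ge -M$, so $\rho(k)+M\ge 0$. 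Since their sum is $0$, each vanishes.

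It remains to propagate this. The operator $\ms T$ is the nearest-neighbour Laplacian on the path $\{-p,\dots,0\}$ with Neumann-type ends, which is connected; hence $(\ms T\rho)(j^*)=0$ forces $\rho=M$ at every neighbour of $j^*$, and iterating along the path gives $\rho\equiv M$ on all of $\Lambda^*_p$. Now every site maximizes $\rho$, so the vanishing of the first and third contributions may be used at every $j$: at any $j$ with $r_j>0$ one gets $-r_jM=0$, hence $M=0$; at any $j$ with $\sum_k a_{j,k}>0$ one gets $2M\sum_k a_{j,k}=0$, hence $M=0$. Hypothesis \eqref{f15} guarantees that at least one such $j$ exists, so $M=0$ and $\rho\equiv 0$, which shows the homogeneous system has trivial kernel and proves the lemma.

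I expect the one point requiring care to be the annihilation term $-\sum_k a_{j,k}[\rho(k)+\rho(j)]$: unlike the copying term $\sum_k c_{j,k}[\rho(k)-\rho(j)]$ it is not the action of a (sub-)Markov generator, so a straightforward probabilistic representation of $\rho$ is unavailable and a Gershgorin/$M$-matrix argument does not produce strict diagonal dominance from the $a$'s alone; it is precisely this term that makes it essential to run the maximum principle with $M=\|\rho\|_\infty$ rather than with $\max_j\rho(j)$, since only the two-sided bound $-M\le\rho(k)\le M$ makes it nonpositive. The remaining verifications — that separating the $\rho$-independent terms yields exactly the displayed homogeneous system, and that the graph induced on $\Lambda^*_p$ by $\ms T$ is connected so that the identity $\rho\equiv M$ indeed spreads to every site — are routine.
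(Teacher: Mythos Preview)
Your proof is correct and takes a genuinely different route from the paper's. The paper argues in $L^2$: it takes the difference $\gamma$ of two solutions, multiplies by $\gamma(j)\pi(j)$ (with $\pi$ the invariant measure of the random-walk part $\ms C+\ms T$), sums, and reads off that both the Dirichlet form and $\sum_j r_j\gamma(j)^2\pi(j)$ vanish. This works directly only when $\ms A=0$; to treat $\ms A\neq 0$ the paper doubles the state space to $\{-1,1\}\times\Lambda^*_p$, setting $\widehat\rho(-1,k)=1-\rho(k)$, which converts $\ms A$ into a bona fide random-walk generator on the doubled space and allows the same $L^2$ argument to go through. Your maximum-principle approach sidesteps this extension entirely: by taking $M=\|\rho\|_\infty$ rather than $\max_j\rho(j)$, the annihilation contribution $-\sum_k a_{j^*,k}[\rho(k)+M]$ is already nonpositive at the maximizer, and the propagation via $\ms T$ and the final evaluation at a site with $r_j>0$ or $\sum_k a_{j,k}>0$ finish the job in one stroke. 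Your argument is more elementary and self-contained for this lemma; the paper's doubling trick, on the other hand, is machinery that recurs---Lemma~\ref{ll03} invokes it again, and the two-point analysis of Section~\ref{sec03} is set up on $\{-1,1\}\times\bb D_N$ for exactly the same reason.
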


\begin{proof}
Equation \eqref{f03a} provides a solution and guarantees existence. We
turn to uniqueness.  Suppose first that $\sum_{j\in\Lambda^*_p} r_j>0$
and $\sum_{j,k\in\Lambda^*_p} a_{j,k} =0$. In this case, the
operator $\ms A$ vanishes.  Consider two solution $\rho^{(1)}$,
$\rho^{(2)}$, and denote their difference by $\gamma$. The difference
satisfies the linear equation
\begin{equation*}
0 \;=\; -\, r_j \, \gamma(j) \;+\; (\ms C \gamma)(j) \;+\; (\ms T
\gamma)(j)\;, \quad j\in \Lambda^*_p\;.
\end{equation*}
Let $\pi$ be the unique stationary state of the random walk on
$\Lambda^*_p$ whose generator is $\ms C + \ms T$. Multiply both sides
of the equation by $\gamma(j)\,\pi(j)$ and sum over $j$ to obtain that
\begin{align*}
0 \;=\; -\, \sum_{j\in \Lambda^*_p} r_j \, \gamma(j)^2 \pi(j) 
\;+\; \< (\ms C + \ms T) \gamma \,,\,\gamma\> \;,
\end{align*}
where $\< f,g\>$ represents the scalar product in $L^2(\pi)$.  As all
terms on the right-hand side are negative, the identity
$\< (\ms C + \ms T) \gamma \,,\,\gamma\> =0$ yields that $\gamma$ is
constant. Since, by hypothesis, $\sum_j r_j >0$, $\gamma\equiv 0$,
which proves the lemma.

Suppose next that $\sum_{j\in\Lambda^*_p} r_j> 0$ and
$\sum_{j,k\in\Lambda^*_p} a_{j,k} >0$. Define the rates $t_{j,k}\ge
0$, $j\not = k \in\Lambda^*_p$, so that
\begin{equation*}
(\ms T\, f) (j) \;=\; \sum_{k: k \not =j} t_{j,k}\,
[f(k)- f(j)] \;, \quad j\in \Lambda^*_p \;.
\end{equation*}

Let $\Lambda^{\rm ext}_p = \{-1,1\} \times \Lambda^*_p$. Points in
$\Lambda^{\rm ext}_p$ are represented by the symbol $(\sigma, k)$,
$\sigma = \pm 1$, $-p\le k\le 0$. We extend the definition of a
function $f: \Lambda^*_p \to \bb R$ to $\Lambda^{\rm ext}_p$ by
setting $f(1,k) = f(k)$, $f(-1,k) = 1 - f(k)$, $k\in
\Lambda^*_p$. This new function is represented by $\widehat f :
\Lambda^{\rm ext}_p \to \bb R$.

With this notation we may rewrite equation \eqref{f05a} as
\begin{equation}
\label{f05b}
0 \;=\; r_{(1,j)} \, [\alpha_{(1,j)} - \widehat \rho(1,j)]  \,+\, 
(\widehat{\ms C} \, \widehat \rho )\, (1,j) \;+\; (\widehat{\ms A} \, 
\widehat \rho)\, (1,j)  \;+\; (\widehat{\ms T} \, \widehat \rho)\, (1,j) \,, 
\,\; j\in \Lambda^*_p\;,
\end{equation}
where, $r_{(1,j)} = r_j$, $\alpha_{(1,j)} = \alpha_{j}$, 
\begin{equation*}
(\widehat{\ms A}\, \widehat \rho)\, (1,j) \;=\; \sum_{k \in \Lambda^*_p} a_{j,k}\,
[\, \widehat \rho(-1,k)-\widehat  \rho(1,j) \, ]\;, 
\end{equation*}
and $\widehat{\ms C}$, $\widehat{\ms T}$ are the generators of the
Markov chains on $\Lambda^{\rm ext}_p$ characterized by the rates
$\widehat c$, $\widehat t$ given by
\begin{align*}
& \widehat c\; [\, (\pm 1,j) \,,\, (\pm 1,k) \,] \,=\, c_{j,k}\;, \quad
\widehat c\, [\, (\pm 1,j) \,,\, (\mp 1,k) \,] \,=\, 0\;, \\
&\quad \widehat t\; [\, (\pm 1,j) \,,\, (\pm 1,k) \,] \,=\, t_{j,k}\;, \quad
\widehat t\, [\, (\pm 1,j) \,,\, (\mp 1,k) \,] \,=\, 0\;.
\end{align*}

Multiply equation \eqref{f05a} by $-1$ to rewrite it as 
\begin{equation}
\label{f05c}
0 \;=\; r_{(-1,j)} \, [ \alpha_{(-1,j)} - \widehat \rho(-1,j)]  
\,+\, (\widehat{\ms C} \, \widehat \rho )(-1,j)
\;+\; (\widehat{\ms A}\, \widehat\rho)(-1,j) \;+\; 
(\widehat{\ms T} \, \widehat \rho)(-1,j) 
\end{equation}
for any $j\in \Lambda^*_p$, where $r_{(-1,j)} = r_{j}$,
$\alpha_{(-1,j)} = 1-\alpha_j$, and
\begin{equation*}
(\widehat{\ms A}\, \widehat \rho)(-1,j) \;=\; \sum_{k \in\Lambda^*_p} a_{j,k}\,
[\, \widehat \rho(1,k)-\widehat  \rho(-1,j)\, ]\;.
\end{equation*}

Since the operator $\widehat {\ms C} + \widehat {\ms A} + \widehat
{\ms T}$ defines an irreducible random walk on $\Lambda^{\rm ext}_p$,
we may proceed as in the first part of the proof to conclude that
there exists a unique solution of \eqref{f05a}.

Finally, suppose that $\sum_{j\in\Lambda^*_p} r_j = 0$ and
$\sum_{j,k\in\Lambda^*_p} a_{j,k} >0$. Let $\rho$ be a solution to
\eqref{f05a}. Then, its extension $\widehat\rho$ is a solution to
\eqref{f05b}, \eqref{f05c}. The argument presented in the first part
of the proof yields that any solution of these equations is
constant. Since $\widehat \rho(1,k) = \rho(k) = 1 - \widehat
\rho(-1,k)$, we conclude that this constant must be $1/2$. This proves
that in the case where $\sum_{j\in\Lambda^*_p} r_j= 0$,
$\sum_{j,k\in\Lambda^*_p} a_{j,k} >0$, the unique solution to
\eqref{f05a} is constant equal to $1/2$.
\end{proof}

Recall from \eqref{01} the definition of $\rho_N$.

\begin{lemma}
\label{ll03}
Suppose that $\sum_{j\in\Lambda^*_p} r_j + \sum_{j,k\in\Lambda^*_p}
a_{j,k}>0$.  Then, for $0\le k<N$,
\begin{equation}
\label{f07}
\rho_N(k) \;=\; \frac {k}{N} \, \beta \;+\; \frac {N-k}{N} \,
\rho_N(0) \;.
\end{equation}
Moreover, there exists a finite constant $C_0$, independent of $N$, such
that 
\begin{equation*}
\big|\, \rho_N(k) \,-\, \rho(k)\,\big| \;\le\; C_0/N \;, \quad
-p\, \le\,  k \,\le\, 0\;,
\end{equation*}
where $\rho$ is the unique solution of \eqref{f05a}. 
\end{lemma}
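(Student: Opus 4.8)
The plan is to extract from the stationarity of $\mu_N$ a closed linear system for the one-point functions $\rho_N(k)$ and to compare it with the system \eqref{f05a} satisfied by $\rho$.

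\emph{The affine profile on the bulk.} Writing out $E_{\mu_N}[L_N\eta_k]=0$ and using that $L_{b,N}$ and $L_{0,1}$ act on $\eta_k$ by nearest-neighbour exchanges while $L_{r,N}\eta_{N-1}=\beta-\eta_{N-1}$, one obtains $\rho_N(k-1)+\rho_N(k+1)-2\rho_N(k)=0$ for $2\le k\le N-2$, next $\rho_N(0)+\rho_N(2)-2\rho_N(1)=0$ at $k=1$, and $\rho_N(N-2)+\beta-2\rho_N(N-1)=0$ at $k=N-1$. Hence $\rho_N$ is discrete harmonic on $\{1,\dots,N-1\}$ with endpoint values $\rho_N(0)$ and $\rho_N(N):=\beta$, so it equals its affine interpolation, which is exactly \eqref{f07}.

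\emph{The equations at the left boundary.} Since $L_{b,N}$ and $L_{r,N}$ do not act on the sites $-p\le k\le 0$ and $L_{0,1}$ acts only at site $0$, the same computation that produced \eqref{f05a} from $E_{\mu}[L_l\eta_j]=0$ — each of $L_R,L_C,L_A,L_S$ acting on $\eta_j$ through a degree-one polynomial, so that the identities close on one-point functions, which is precisely condition \eqref{f15a} — shows that $E_{\mu_N}[L_N\eta_j]=0$ is exactly \eqref{f05a} with $\rho$ replaced by $\rho_N$ for $-p\le j\le -1$, and is \eqref{f05a} (with $\rho_N$) \emph{plus} the extra term $\rho_N(1)-\rho_N(0)$ at $j=0$, coming from $L_{0,1}$. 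By the affine profile just established, $\rho_N(1)-\rho_N(0)=\frac 1N(\beta-\rho_N(0))$, hence $|\rho_N(1)-\rho_N(0)|\le 1/N$.

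\emph{Comparison.} Set $\gamma_N(k)=\rho_N(k)-\rho(k)$ for $-p\le k\le 0$ and subtract \eqref{f05a} from the system above. The constants $r_j\alpha_j$ and $\sum_k a_{j,k}$ cancel, leaving $\ms L\gamma_N=g_N$, where $\ms L$ is the homogeneous operator underlying \eqref{f05a}, namely $(\ms L\gamma)(j)=-r_j\gamma(j)+(\ms C\gamma)(j)+\sum_k a_{j,k}[-\gamma(k)-\gamma(j)]+(\ms T\gamma)(j)$, and $g_N$ is supported at $j=0$ with $|g_N(0)|=|\rho_N(1)-\rho_N(0)|\le 1/N$. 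The uniqueness argument in Lemma \ref{ll02} is precisely the statement that, under \eqref{f15}, $\ms L\gamma=0$ forces $\gamma\equiv 0$ (directly when $\sum_j r_j>0$ and $\sum_{j,k}a_{j,k}=0$, and via the extension to $\Lambda^{\rm ext}_p$ otherwise). Being a linear map on the finite-dimensional space $\R^{\Lambda^*_p}$, $\ms L$ is therefore invertible, so $\|\gamma_N\|_\infty\le\|\ms L^{-1}\|\,\|g_N\|_\infty\le C_0/N$ with $C_0:=\|\ms L^{-1}\|$ independent of $N$, since $\ms L$ does not depend on $N$. This is the asserted bound. I do not expect a genuine obstacle; the one point to verify carefully is that the operator governing $\gamma_N$ — including the reflecting term $\ms T$ at the endpoints $j=-p,0$ and the fact that, once the $\rho_N(1)-\rho_N(0)$ term has been moved to the right-hand side, no $L_{0,1}$ contribution remains on the left — coincides with the operator shown to be injective in Lemma \ref{ll02}.
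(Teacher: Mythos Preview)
Your proof is correct. The first assertion and the derivation of the equations satisfied by $\rho_N$ on $\Lambda^*_p$ match the paper exactly. For the bound on $\gamma_N=\rho_N-\rho$, however, you take a different and shorter route. The paper proceeds by an explicit energy estimate: it multiplies the equation for $\gamma_N$ by $\pi(j)\gamma_N(j)$ (with $\pi$ the stationary measure of $\ms C+\ms T$), sums, and uses Young's inequality to absorb the forcing term $\theta_N\gamma_N(0)\pi(0)$ into the Dirichlet form, concluding that both $\gamma_N(k)^2$ and the increments $[\gamma_N(j+1)-\gamma_N(j)]^2$ are $O(1/N^2)$; the case $\ms A\neq 0$ is only sketched by reference to the space-doubling of Lemma~\ref{ll02}. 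You instead observe that Lemma~\ref{ll02} already establishes that the homogeneous operator $\ms L$ has trivial kernel on the finite-dimensional space $\bb R^{\Lambda^*_p}$, hence is invertible with an $N$-independent inverse, so $\|\gamma_N\|_\infty\le\|\ms L^{-1}\|\,\|g_N\|_\infty\le C_0/N$. Your argument is cleaner and treats all cases at once; the paper's argument is more hands-on and in principle tracks the constant, though it does not actually do so. Your closing caveat is well placed: once $\rho_N(1)-\rho_N(0)$ is moved to the right-hand side, the remaining operator at $j=0$ is exactly $(\ms T\cdot)(0)=\rho(-1)-\rho(0)$, so the left-hand side is indeed the same $\ms L$ whose injectivity Lemma~\ref{ll02} proves.
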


\begin{proof}
Fix $1\le k<N$. As $\mu_N$ is the stationary state,
$E_{\mu_N}[L_N \, \eta_k]=0$. Hence, if we set $\rho_N(N)=\beta$,
$(\Delta_N \rho_N)(k) := \rho_N(k-1) + \rho_N(k+1) - 2 \rho_N(k)
=0$. In particular, $\rho_N$ solves the discrete difference equation
\begin{equation*}
(\Delta_N \rho_N)(k) \,=\, 0\;, \;\;  1 \, \le\,  k \,<\, N \;,  \quad
\rho_N(N) \,=\, \beta\;, \quad \rho_N(0) \,=\, \rho_N(0)\;,  
\end{equation*}
whose unique solution is given by \eqref{f07}. This proves the first
assertion of the lemma.

We turn to the second statement.  It is clear that $\rho_N(j)$
fulfills \eqref{f05a} for $-p\le j<0$. For $j=0$ the equation is
different due to the stirring dynamics between $0$ and $1$ induced by
the generator $L_{0,1}$. We have that
\begin{equation*}
0 \;=\; r_0 \, [\alpha_0 - \rho_N(0)]  \,+\, (\ms C \rho_N )(0)
\;+\; (\ms A\rho_N)(0) \;+\;  (\Delta_N \rho_N)(0)\;.
\end{equation*}
By \eqref{f07}, we may replace $\rho_N(1)$ by $[1-(1/N)]\, \rho_N(0) +
(1/N)\beta$, and the previous equation becomes
\begin{equation}
\label{f09}
0 \;=\; r_0 \, [\alpha_0 - \rho_N(0)]  \,+\, (\ms C \rho_N )(0)
\;+\; (\ms A\rho_N)(0) \;+\;  (\ms T \rho_N)(0)
\;+\; \frac 1N \,\big[\, \beta - \rho_N(0)\,\big ]  \;.
\end{equation}
This equation corresponds to \eqref{f05a} with $r'_0 = r_0 + (1/N)$
and $\alpha'_0 =(\alpha_0\, r_0 + \beta/N) /[r_0 +
(1/N)]$.

By Lemma \ref{ll02}, equation \eqref{f05a} for $j\not =0$ and
\eqref{f09} for $j=0$ has a unique solution. Let $\gamma_N = \rho_N -
\rho$, where $\rho$ is the solution of \eqref{f05a}. $\gamma_N$
satisfies
\begin{equation*}
0 \;=\; \frac 1N \, [\beta - \rho_N(0)] \,\delta_{0,j}
\; -\, r_j \,  \gamma_N(j) \,+\, (\ms C \gamma_N )(j)
\;+\; (\ms A \gamma_N)(j) \;+\;  (\ms T \gamma_N)(j)\;,
\end{equation*}
where $\delta_{0,j}$ is equal to $1$ if $j=0$ and is equal to $0$
otherwise. 

We complete the proof in the case $\ms A=0$. The other cases can be
handled by increasing the space, as in the proof of Lemma
\ref{ll02}. Denote by $\pi$ the stationary state of the generator $\ms
C + \ms T$. Multiply both sides of the previous equation by $\pi(j)
\gamma_N(j)$ and sum over $j$ to obtain that
\begin{equation*}
\sum_{j\in\Lambda^*_p} r_j \,  \gamma_N(j)^2 \pi(j) \,+\, 
\< -\, (\ms C + \ms T) \, \gamma_N \,,\, \gamma_N \>
\;=\; \theta_N\, \gamma_N(0) \, \pi(0)  \;,
\end{equation*}
where $\theta_N = (1/N) \, [\beta - \rho_N(0)]$. Let
$k\in \Lambda^*_p$ such that $r_k>0$. Such $k$ exists by assumption.
Rewrite $\gamma_N(0)$ as
$\sum_{k<j\le 0} [\gamma_N(j) - \gamma_N(j-1)] + \gamma_N(k)$ and use
Young's inequality to obtain that there exists a finite constant
$C_0$, depending only on $p$, $\pi$ and on the rates $c_{j,k}$, $r_j$
such that
\begin{equation*}
\theta_N\, \gamma_N(0) \, \pi(0) \;\le\;
(1/2) \, r_k \,  \gamma_N(k)^2\, \pi(k) \,+\, 
(1/2)\, \< -\, (\ms C + \ms T) \, \gamma_N \,,\, \gamma_N \>
\;+\; C_0 \, \theta_N^2\;.
\end{equation*}
Here and throughout the article, the value of the constant $C_0$ may
change from line to line. The two previous displayed equations and the
fact that $|\beta - \rho_N(0)|\le 1$ yield that
\begin{equation*}
\sum_{j\in\Lambda^*_p} r_j \,  \gamma_N(j)^2 \pi(j)
\,+\, 
\< -\, (\ms C + \ms T) \, \gamma_N \,,\, \gamma_N \>
\;\le\; \frac {C_0}{N^2} \;\cdot
\end{equation*}
In particular, $\gamma_N(k)^2 \le C_0/N^2$ and $[\gamma_N(j+1) -
\gamma_N(j)]^2 \le C_0/N^2$ for $-p \le j <0$. This completes the
proof of the lemma.
\end{proof}

\section{Proof of Theorem \ref{mt0}: two point functions.}
\label{sec03}

We examine in this section the two-point correlation function under
the stationary state $\mu_N$. Denote by $\bb D_N$ the discrete simplex
defined by
\begin{equation*}
\bb D_N \;=\; \{(j,k) : -p \le j<k\le N-1\}\quad
\text{and set}\quad 
\Xi_N = \{-1,1\} \times \bb D_N\;.
\end{equation*}
Let
\begin{equation*}
\bar\eta_m \;=\;  1 - \eta_m \;, \quad 
\bar\rho_N(m) \;=\; 1 - \rho_N(m)\;, \quad m\in \Lambda_{N,p}\;,
\end{equation*}
and define the two-point correlation function $\varphi_N(\sigma,j,k)$,
$(\sigma, j,k) \in \Xi_N$, by
\begin{equation}
\label{g10}
\begin{aligned}
& \varphi_N(1,j,k) \;=\; E_{\mu_N} \big[\, \{\eta_j-\rho_N(j)\}\,
\{\eta_k-\rho_N(k)\}\,\big] \;, \\
&\quad \varphi_N(-1,j,k) \;=\; 
E_{\mu_N} \big[\, \{\bar \eta_j- \bar \rho_N(j)\}\,
\{\eta_k-\rho_N(k)\}\,\big] \;. 
\end{aligned}
\end{equation}
Note that $\varphi_N(-1,j,k) = -\, \varphi_N(1,j,k)$.  The identity
$E_{\mu_N}[L_N \{\eta_j-\rho_N(j)\}\, \{\eta_k-\rho_N(k)\} ]=0$ provides
a set of equations for $\varphi_N$. Their exact form requires some
notation.

Denote by $\ms L^{\rm rw}_N$ the generator of the symmetric,
nearest-neighbor random walk on $\bb D_N$. This generator is defined
by the next two sets of equations. If $k-j>1$,
\begin{equation*}
(\ms L^{\rm rw}_N \phi)(j,k) \;=\; 
\begin{cases}
(\bs \Delta \phi) (j,k) & \text{if $j>-p$, $k < N-1$}, \\
(\bs \nabla^+_1 \phi)(-p,k) + (\bs \Delta_2 \phi) (-p,k) & \text{if $j=-p$, $k< N-1$}, \\
(\bs \Delta_1 \phi) (j,N-1)+ (\bs \nabla^-_2 \phi) (j,N-1)  & \text{if $j>-p$, $k=N-1$}, \\
(\bs \nabla^+_1 \phi)(-p,N-1) + (\bs \nabla^-_2 \phi) (-p,N-1) & \text{if $j=-p$, $k=N-1$}, \\
\end{cases}
\end{equation*}
while for $-p< k < N-2$,
\begin{gather*}
(\ms L^{\rm rw}_N \phi)(k,k+1) \;=\; (\bs \nabla^-_1 \phi)(k,k+1) 
\;+\;  (\bs \nabla^+_2 \phi)(k,k+1) \;, \\
(\ms L^{\rm rw}_N \phi)(-p,-p+1) \;=\;  (\bs \nabla^+_2 \phi)(-p,-p+1) \;, \\
(\ms L^{\rm rw}_N \phi)(N-2,N-1) \;=\;  (\bs \nabla^-_1 \phi)(N-2,N-1) \;.
\end{gather*}
In these formulae, $\bs \nabla^\pm_i$, resp. $\bs \Delta_i$, represents the
discrete gradients, resp. Laplacians, given by
\begin{align*}
& (\bs \nabla^\pm_1 \phi)(j,k) \;=\; \phi(j\pm 1,k) - \phi(j,k)\;, \quad
(\bs \nabla^\pm_2 \phi)(j,k) \;=\; \phi(j,k\pm 1) - \phi(j,k)\;, \\
&\quad (\bs \Delta_1 \phi) (j,k) = \phi(j-1,k) + \phi(j+1,k) - 2 \phi(j,k)\; , \\
&\qquad (\bs \Delta_2 \phi) (j,k) = \phi(j,k-1) + \phi(j,k+1)-2 \phi(j,k)\;, \\
&\qquad\quad (\bs \Delta \phi) (j,k) = (\bs \Delta_1 \phi) (j,k) + (\bs \Delta_2 \phi) (j,k)\;.
\end{align*}

Let $L^{\rm ex}_N$ be the generator given by $L^{\rm ex}_N = L_S +
L_{0,1} + L_{b,N}$.  A straightforward computation yields that for
$(j,k) \in \bb D_N$,
\begin{equation*}
E_{\mu_N} \big[\, L^{\rm ex}_N  \, \{\eta_j-\rho_N(j)\}\,
\{\eta_k-\rho_N(k)\} \, \big] 
\;=\;  (\ms L^{\rm rw}_N  \varphi_N) (1,j,k) \;+\; F_N(1,j,k)\;,
\end{equation*}
where it is understood that the generator $\ms L^{\rm rw}_N$ acts on
the last two coordinates keeping the first one fixed, and 
\begin{equation}
\label{g09}
F_N(\sigma, j,k) \;=\; -\, \sigma\, [\rho_N(j+1)-\rho_N(j)]^2 \, 
\bs 1\{k=j+1\} \;.
\end{equation}
Similarly, 
\begin{equation*}
E_{\mu_N} \big[\, L^{\rm ex}_N  \, \{\bar \eta_j- \bar \rho_N(j)\}\,
\{\eta_k-\rho_N(k)\} \, \big] 
\;=\;  (\ms L^{\rm rw}_N  \varphi_N) (- 1,j,k) \;+\; F_N(- 1,j,k)\;.
\end{equation*}
For the next generators, we do not repeat the computation of the
action of the generator on the product $\{\bar \eta_j- \bar
\rho_N(j)\}\, \{\eta_k-\rho_N(k)\}$ because it can be inferred from
the action on $\{\eta_j- \rho_N(j)\}\, \{\eta_k-\rho_N(k)\}$. \smallskip

We turn to the remaining generators. Extend the definition of the
rates $r_j$, $c_{j,k}$ and $a_{j,k}$ to $\Lambda_{N,p}$ by setting
\begin{equation*}
r_j \,=\, c_{j,k} \,=\, a_{j,k} \,=\, 0 \quad 
\text{ if $j\not\in\Lambda_p^*$  or $k\not\in\Lambda_p^*$}\; .
\end{equation*}
To present simple expressions for the equations satisfied by the
two-point correlation function, we add cemetery points to the state
space $\Xi_N$. Let $\overline{\Xi}_N = \Xi_N \cup \partial\, \Xi_N$,
where
\begin{align}
\nonumber
\partial\, \Xi_N \; & =\; \big\{ (\sigma, k) :
\sigma = \pm 1 \,,\, -p\le k<N \big\} \,\cup\,
\big\{ (\sigma, k,k) :
\sigma = \pm 1 \,,\, -p\le k\le 0 \big\} \\
\, & \cup\,
\big\{ (\sigma, k,N) :
\sigma = \pm 1 \,,\, -p\le k<N-1 \big\}
\label{eqf1}
\end{align}
 is the set of absorbing points.

A straightforward computation yields that for $(j,k) \in \bb D_N$,
\begin{equation*}
E_{\mu_N} \big[\, L_R \, \{\eta_j-\rho_N(j)\}\,
\{\eta_k-\rho_N(k)\} \, \big] 
\;=\; (\ms L^{\dagger}_R  \, \varphi_N) (1,j,k) \;,
\end{equation*}
where
\begin{equation*}
(\ms L^{\dagger}_R  \, \phi) (\sigma, j,k)  \;=\;
r_j \, [ \varphi_N(\sigma,k) - \varphi_N(\sigma,j,k) ]
\,+\, r_k \, [ \varphi_N(\sigma,j) - \varphi_N(\sigma,j,k) ]
\end{equation*}
provided we set
\begin{equation}
\label{g08}
\varphi_N(\sigma,m) \;=\; 
b_N(\sigma,m) \; :=\;  0 \;, \quad -p\,\le\, m\,<\, N\;, \;\;
\sigma \;=\; \pm 1 \;.
\end{equation}

Similarly, an elementary computation yields that for $(j,k) \in \bb
D_N$,
\begin{equation*}
E_{\mu_N} \big[\, L_{r,N} \, \{\eta_j-\rho_N(j)\}\,
\{\eta_k-\rho_N(k)\} \, \big] 
\;=\; (\ms L^{\dagger}_{r,N}  \, \varphi_N) (1,j,k) \;,
\end{equation*}
where
\begin{equation*}
(\ms L^{\dagger}_{r,N}  \, \varphi_N) (\sigma,j,k)  
\;=\; \bs 1\{k=N-1\} \, [ \varphi_N(\sigma,j,N) -
\varphi_N(\sigma,j,k)] \;, 
\end{equation*}
provided we set
\begin{equation}
\label{g07}
\varphi_N(\sigma,m,N) \;=\; b_N(\sigma,m,N) \;:=\; 0 \;, 
\quad -p\,\le\, m\,\le\, N-2\;, \;\; \sigma \;=\; \pm 1 \;.
\end{equation}

We turn to the generator $L_C$. An elementary computation yields that
for $(j,k) \in \bb D_N$,
\begin{equation*}
E_{\mu_N} \big[\, L_C \, \{\eta_j-\rho_N(j)\}\,
\{\eta_k-\rho_N(k)\} \, \big] 
\;=\;  (\ms L^{\dagger}_C  \, \varphi_N) (1,j,k) \;, 
\end{equation*}
where 
\begin{align*}
(\ms L^\dagger_C \phi)(\sigma, j,k) \; =\; 
\sum_{m: m\not = j} c_{j,m} 
\{ \phi(\sigma,m,k) - \phi(\sigma,j,k) \}
\; +\;  \sum_{m: m\not = k} c_{k,m} 
\{ \phi(\sigma,j,m) - \phi(\sigma,j,k) \}\;,
\end{align*}
provided we set
\begin{equation}
\label{g06}
\varphi_N(\sigma,m,m) \;=\; b_N(\sigma,m,m) \; :=\;
\sigma\, \rho_N(m)\, [1-\rho_N(m)]\;, \quad
-p\le m\le 0\;.
\end{equation}

Finally, we claim that 
for $(j,k) \in \bb D_N$,
\begin{equation*}
E_{\mu_N} \big[\, L_A \, \{\eta_j-\rho_N(j)\}\,
\{\eta_k-\rho_N(k)\} \, \big] 
\;=\;  (\ms L^{\dagger}_A  \, \varphi_N) (1,j,k) \;, 
\end{equation*}
where
\begin{equation*}
(\ms L^\dagger_A \phi)(\sigma,j,k) \; =\; 
\sum_{m: m\not = j} a_{j,m} \{ \phi(-\sigma,m,k) - \phi(\sigma,j,k) \}
\;+\;  \sum_{m: m\not = k} a_{k,m} \{ \phi(-\sigma,j,m) - \phi(\sigma,j,k) \}\;,
\end{equation*}
and $\varphi_N (\sigma,k,k)$ is given by \eqref{g06}.  Hence, the
generator $\ms L^\dagger_A$ acts exactly as $\ms L^\dagger_C$, but it
flips the value of the first coordinate.  Note that it is the only
generator which changes the value of the first coordinate. \smallskip

Let $\ms L^\dagger_N$ be the generator on $\overline{\Xi}_N$ given by
\begin{equation*}
\ms L^\dagger_N \;=\; \ms L^{\rm rw}_N \,+\, \ms L^\dagger_{R} 
\,+\, \ms L^\dagger_{r,N} \,+\, \ms L^\dagger_{C}
\,+\, \ms L^\dagger_{A}\;.
\end{equation*}
If $\sum_j \sum_{j, k} a_{j,k}=0$, the generator $\ms L^\dagger_{A}$
vanishes, the first coordinate is kept constant by the dynamics and we
do not need to introduce the variable $\sigma$.  Note that the points
in $\partial\, \Xi_N$ are absorbing points.

As $E_{\mu_N}[L_N \{\eta_j-\rho_N(j)\}\, \{\eta_k-\rho_N(k)\} ]=0$,
the previous computations yield that the two-point correlation
function $\varphi_N$ introduced in \eqref{g10} solves
\begin{equation}
\label{g11}
\begin{cases}
\vphantom{\Big\{}
(\ms L^\dagger_N \psi_N) (\sigma,j,k) + F_N (\sigma,j,k) = 0 \;,
\;\; (\sigma,j,k) \in \Xi_N\;, \\
\vphantom{\Big\{}
\psi_N(\sigma,j,k) \,=\,  b_N(\sigma,j,k) \;,
\;\; (\sigma,j,k) \in \partial \, \Xi_N\;,
\end{cases}   
\end{equation}
where $F_N$ and $b_N$ are the functions defined in \eqref{g09},
\eqref{g08}, \eqref{g07}, \eqref{g06}.

As $\ms L^\dagger_N$ is a generator, \eqref{g11} admits a unique
solution [on the set $\{(1,j,k) : (j,k)\in\bb D_N\}$ if $\ms
L^\dagger_{A}$ vanishes]. This solution can be represented in terms of
the Markov chain induced by the generator $\ms L^\dagger_N$. 

Denote by $\varphi^{(1)}_N$, resp. $\varphi^{(2)}_N$, the solution of
\eqref{g11} with $b_N=0$, resp. $F_N=0$. It is clear that $\varphi_N
= \varphi^{(1)}_N + \varphi^{(2)}_N$. Denote by $X_N(t)$ the
continuous-time Markov chain on $\overline{\Xi}_N$ associated to the
generator $\ms L^\dagger_N$. Let $\bs P_{(\sigma,j,k)}$ be the
distribution of the chain $X_N$ starting from
$(\sigma,j,k)$. Expectation with respect to $\bs P_{(\sigma,j,k)}$ is
represented by $\bs E_{(\sigma,j,k)}$.

Let $H_N$ be the hitting time of the boundary $\partial\, \Xi_N$:
\begin{equation*}
H_N\;=\; \inf \big\{t\geq 0 : X_N(t) \in \partial\, \Xi_N \,\big\}\;.
\end{equation*}
It is well known (cf. \cite[Theorem 6.5.1]{fri75} in the continuous
case) that
\begin{equation*}
\varphi^{(1)}_N (\sigma,j,k)  \;=\;
\bs E_{(\sigma,j,k)} \Big[ \int_0^{H_N} F_N(X_N(s)) \, ds \Big]\;.
\end{equation*}
 It is also well known that
\begin{equation*}
\varphi^{(2)}_N (\sigma,j,k)  \;=\;
\bs E_{(\sigma,j,k)} \big[ b_N(X_N(H_N)) \big]\;.
\end{equation*}

To estimate $\varphi^{(1)}_N$ and $\varphi^{(2)}_N$ we need to show
that the process $X_N(t)$ attains the boundary $\partial\, \Xi_N$ at
the set $\{ (\sigma, k,k) : \sigma = \pm 1 \,,\, -p\le k\le 0 \} $
with small probability. This is the content of the next two lemmata.

For a subset $A$ of $\overline{\Xi}_N$, denote by $H(A)$,
resp. $H^+(A)$, the hitting time of the set $A$, resp. the return time
to the set $A$:
\begin{equation*}
H(A) \;=\; \inf \big\{t\ge 0 : X_N(t) \in A
\big\} \;, \quad 
H^+(A) \;=\; \inf \big\{t\ge \tau_1 : X_N(t) \in A
\big\}\;,
\end{equation*}
where $\tau_1$ represents the time of the first jump: $\tau_1 = \inf
\{s>0 : X_N(s) \not = X_N(0)\}$. 


The next lemma, illustrated in Figure \ref{fig:lemmal07}, translates
to the present model the fact that starting from $(1,0)$ the
two-dimensional, nearest-neighbor, symmetric random walk hits the line
$\{(0,k) : k\in \bb Z\}$ at a distance $n$ or more from the origin
with a probability less than $C/n$.

Let $\widehat{\bs Q}_{(l,m)}$ be the law of such a random walk
evolving on $\bb Z^2$ starting from $(l,m)$.  Denote by $B_r(l,m)$ the
ball of radius $r>0$ and center $(l,m)\in \bb Z^2$, and by $\bb L$ the
segment $\{(\sigma, 0,a) : \sigma = \pm 1 \,,\, 1\le a<N \}$.
Represent the coordinates of $X_N(t)$ by $(\sigma_N(t), X^1_N(t),
X^2_N(t))$.

\begin{figure}
\includegraphics{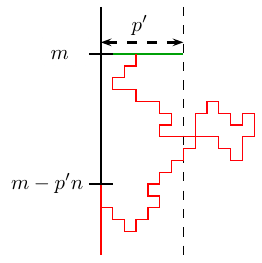}
\caption{Lemma \ref{l07} states that a random walk (red
    trajectory) started from the green segment has a probability at
    most of order $1/n$ of hitting $\bb L$ in the red half-line.}
\label{fig:lemmal07} 
\end{figure}

\begin{lemma}
\label{l07}
Let $p'=p+1$. There exists a finite constant $C_0$ such that for all $n$,
\begin{equation*}
\max_{\sigma = \pm 1} \max_{l,m} 
\bs P_{(\sigma,l,m )} \big[\, H(\bb L)  = \infty
\;\text{or}\; X^2_N(H(\bb L)) \le m - p'n \,\big]
\;\le\; \frac {C_0}n\;,
\end{equation*}
where the maximum is carried over all pairs $(l,m)$ such that
$1\le l\le p'$, $\{(a,b) \in B_{p'n}(0,m) : a\ge 0\} \subset \bb D^0_N
= \{(a,b) \in \bb D_N : a\ge 0\}$.
\end{lemma}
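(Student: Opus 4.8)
The statement is essentially a quantitative escape estimate for the auxiliary Markov chain $X_N(t)$ on $\overline\Xi_N$, and the natural strategy is to compare $X_N$ with the free two-dimensional symmetric nearest-neighbor walk $\widehat{\bs Q}_{(l,m)}$ on $\bb Z^2$ and reduce to the classical gambler's-ruin-type fact recalled in the remark preceding the lemma. First I would observe that the event in the display decomposes into two pieces: (i) the walk never hits $\bb L$, i.e. the second coordinate $X^2_N$ either drifts off to the right boundary $\{\cdot = N\}$ or the chain gets absorbed at a point of $\partial\,\Xi_N$ other than $\bb L$ before touching the line $\{X^1_N = 0\}$; and (ii) the walk does hit $\bb L$ but at a far-away point, $X^2_N(H(\bb L)) \le m - p'n$. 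The key reduction is that, as long as the chain stays inside the region $\{(a,b)\in B_{p'n}(0,m):a\ge 0\}\subset \bb D^0_N$ and has not flipped $\sigma$ in a way that matters, the generator $\ms L^\dagger_N$ restricted to these coordinates acts exactly like the generator $\ms L^{\rm rw}_N$ of the two-dimensional walk augmented by the ``long jumps'' coming from $\ms L^\dagger_C$ and $\ms L^\dagger_A$ (the copy/anti-copy dynamics), all of whose jump targets lie in $\Lambda_p^*$, hence within the relevant ball once $n$ is large. So the first coordinate of the chain behaves like a symmetric walk on $\{0,1,\dots\}$ started at $l\in\{1,\dots,p'\}$ plus extra moves confined to a bounded strip near $0$, and the relevant hitting statistics of $\{X^1_N=0\}$ are therefore comparable to those of the pure random walk.

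Concretely, I would argue as follows. Let $\tau = H(\{X^1_N=0\}\cap\partial\,\Xi_N)$; on the set in question, either the chain is absorbed somewhere in $\partial\,\Xi_N$ before $\tau$, or it reaches $\{X^1_N=0\}$ but with $X^2_N$ already decreased by at least $p'n$, or $X^2_N$ reaches $N$ first. In each scenario, projecting onto the pair $(X^1_N, X^2_N)$ and stopping the chain upon exiting the ball $B_{p'n}(0,m)$ (restricted to $a\ge 0$), the pair agrees in law with the stopped free walk $\widehat{\bs Q}_{(l,m)}$ except for the bounded-range excursions of the first coordinate inside the strip $\{0\le a \le p\}$, which only help it return to $\{a=0\}$ quickly. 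Hence $\bs P_{(\sigma,l,m)}[\cdots] \le \widehat{\bs Q}_{(l,m)}[\,\widehat H(\{a=0\}) = \infty \text{ or the first coordinate stays} \ge 1 \text{ until } b \text{ drops by } p'n\,]$, up to a harmless factor. Now I invoke the classical estimate: for the planar symmetric nearest-neighbor walk started at distance $O(1)$ from the line $\{a=0\}$, the probability of first hitting that line at Euclidean distance $\ge R$ from the starting height (or never hitting it, which for the recurrent planar walk has probability zero, but the relevant ``never within $B_{p'n}$'' event) is $O(1/R)$; here $R \asymp p'n \asymp n$, giving the bound $C_0/n$. Since $l$ ranges over the finite set $\{1,\dots,p'\}$ and $\sigma$ over $\{\pm1\}$, the maxima are over finitely many cases and the constant $C_0$ is uniform.

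For the ``$H(\bb L)=\infty$'' part I would be slightly more careful: the chain can be absorbed at a cemetery point of $\partial\,\Xi_N$ lying off $\bb L$ (e.g. at a diagonal point $(\sigma,k,k)$ with $k\ge1$ after the two coordinates collide, or at the top boundary $(\sigma,m,N)$). But such an absorption, to happen before hitting $\bb L$, forces either the first coordinate to have reached $0$ already (contradicting ``before hitting $\bb L$'') — so diagonal absorption is only possible after the second coordinate has itself come down to meet the first near $0$, which again requires $X^2_N$ to have dropped — or forces $X^2_N$ to reach $N$, which by the gambler's-ruin estimate for the second coordinate between its start $m=O(1)$ and $N$ again costs $O(1/N)=o(1/n)$ unless... actually since we only need $O(1/n)$ and $N\ge p'n$ eventually, this is absorbed into the same bound. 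So the whole event is dominated by the single planar escape probability and the lemma follows.

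**Main obstacle.** The delicate point is controlling the long-range jumps of $\ms L^\dagger_C$ and $\ms L^\dagger_A$: these move a coordinate anywhere within $\Lambda_p^*$ and, crucially, $\ms L^\dagger_A$ flips $\sigma$. One must check that (a) these jumps never take the pair $(X^1_N,X^2_N)$ outside the ball $B_{p'n}(0,m)$ while the ``first'' coordinate is still positive — true because their targets lie in the bounded set $\Lambda_p^*$, so they matter only once a coordinate is already within distance $p$ of $0$, well inside the ball once $n$ is large — and (b) the $\sigma$-flips do not create a way to ``avoid'' $\bb L$, since $\bb L = \{(\sigma,0,a):\sigma=\pm1, 1\le a<N\}$ already includes both signs. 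This is why $\bb L$ was defined to be sign-blind. I expect writing out the coupling/domination between $X_N$ restricted to this window and the free planar walk — precisely formalizing ``the extra moves only help'' — to be the one genuinely technical step; everything else is the standard $O(1/n)$ harmonic-measure estimate for the two-dimensional walk.
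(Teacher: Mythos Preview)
Your strategy---reduce to the free planar walk and invoke the $O(1/n)$ harmonic-measure estimate---is the right one and matches the paper. But you have misidentified the main obstacle and thereby overcomplicated the argument. The point you are missing is that the rates $r_j$, $c_{j,m}$, $a_{j,m}$ were \emph{extended to vanish} whenever the index lies outside $\Lambda_p^* = \{-p,\dots,0\}$. Before the chain hits $\bb L$ one has $X^1_N \ge 1$, and since $X^2_N > X^1_N$ always, also $X^2_N \ge 2$; hence \emph{neither} coordinate is in $\Lambda_p^*$, and the generators $\ms L^\dagger_R$, $\ms L^\dagger_C$, $\ms L^\dagger_A$ are identically zero along the trajectory up to $H(\bb L)$. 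There are no long-range jumps and no $\sigma$-flips to worry about; up to $H(\bb L)$ the pair $(X^1_N, X^2_N)$ \emph{is} the free symmetric nearest-neighbor walk (and the ball hypothesis keeps it away from the right absorbing boundary). The coupling/domination step you flag as ``the one genuinely technical step'' is vacuous.

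With this simplification the paper's proof is three lines: quote Lawler's estimate
$\widehat{\bs Q}_{(1,0)}[H(B_n(0,0)^\complement) < H(\bb L_n)] \le C_0/n$;
iterate it via the strong Markov property to obtain
$\bs P_{(\sigma,l+i,m)}[H(B_{in}(l,m)^\complement) < H(\bb L_{in}(l,m))] \le C_0\, i/n$;
and finish with the single containment
\[
\{\,H(\bb L)=\infty \ \text{or}\ X^2_N(H(\bb L)) \le m - p'n\,\}\ \subseteq\ \{\,H(B_{p'n}(0,m)^\complement) < H(\bb L_{p'n}(0,m))\,\},
\]
which handles both parts of your decomposition at once. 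Your separate analysis of absorption at $(\sigma,k,N)$---where you also write ``$m=O(1)$'', which is wrong since the ball hypothesis forces $m$ to be large---is subsumed by this containment.
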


\begin{proof}
Let $\bb L_r = \{(0,l) : -r\le l\le r\}$.  By \cite[Proposition
2.4.5]{law91}, there exists a finite constant $C_0$ such that for all
$n\ge 1$,
\begin{equation*}
\widehat{\bs Q}_{(1,0)} \big[ H(B_n(0,0)^\complement) < H (\bb L_{n}) \,\big]
\;\le\; \frac {C_0}n\;\cdot 
\end{equation*}

Let $\bb L_r (l,m) = \{(\sigma, l,a) : \sigma = \pm 1 \,,\, m-r\le
a\le m+r\}$. By the previous displayed equation, if $\bb L_{n} (l,m)$
is contained in $\bb D^0_N$,
\begin{equation*}
\bs P_{(\sigma,l+1,m )} \big[\, H(B_{n}(l,m)^\complement) 
< H (\bb L_{n} (l,m)) \,\big] \;\le\; \frac {C_0}n\;\cdot
\end{equation*}
Iterating this estimate $i$ times yields that
\begin{equation*}
\bs P_{(\sigma,l+i,m )} \big[\, H(B_{in}(l,m)^\complement) 
< H (\bb L_{in} (l,m)) \,\big]
\;\le\; \frac {C_0\, i}n
\end{equation*}
provided all sets appearing in this formula are contained in $\bb
D^0_N$. The assertion of the lemma follows from this estimate
 and the following observation: 
\begin{equation*}
\{\, H(\bb L)  = \infty
\;\text{or}\; X^2_N(H(\bb L)) \le m - p'n \,\}\subseteq
\{\, H(B_{p'n}(0,m)^\complement) 
< H (\bb L_{p'n} (0,m)) \,\}.
\end{equation*}
\end{proof}

The next lemma presents the main estimate needed in the proof of the
bounds of the two-point correlation functions. Recall from
\eqref{eqf1} that we denote by $(\sigma, k)$, $(\sigma, k, N)$ some
cemitery points. Let
\begin{gather*}
\Sigma = \{ (\sigma, l,0) : \sigma = \pm 1 \,,\, -p\le l<0\}\;, \\
\partial_N \,=\, \big\{ (\sigma, k) :
\sigma = \pm 1 \,,\, -p\le k<N \big\} 
\, \cup\, \big\{ (\sigma, k,N) :
\sigma = \pm 1 \,,\, -p\le k<N-1 \big\}\;.
\end{gather*}

\begin{lemma}
\label{l08}
For all $\delta>0$, 
\begin{equation*}
\lim_{N\to\infty} \max_{\substack{(j,k) \in \bb D_N \\ j>\delta N}}
\bs P_{(1,j,k)} \big[ H(\Sigma) < H(\partial_N) \big] \;=\; 0\;.
\end{equation*}
\end{lemma}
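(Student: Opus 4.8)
The statement says that starting from a point $(1,j,k)$ with $j>\delta N$, the dual chain $X_N$ reaches the "bad" set $\Sigma$ (the segment $\{(\sigma,l,0): -p\le l<0\}$ sitting on the line where $\ms L^\dagger_C$ fuses the two walkers) before hitting the cemetery $\partial_N$ with vanishing probability. The plan is to decompose the trajectory into three stages and control each one. First, before the chain reaches $\Sigma$, its first coordinate $X^1_N$ must come down from $j>\delta N$ to $l<0$; in particular it must pass through every column between, and it must do so without ever having hit $\partial_N$. The key structural remark is that, away from the left boundary block (columns $\le 0$), the first coordinate $X^1_N$ evolves as a genuine symmetric nearest-neighbor random walk on $\Z$ with reflection only implicit through the interaction with the second coordinate, and the dynamics $\ms L^\dagger_R+\ms L^\dagger_{r,N}$ act on the second coordinate (sending it toward the cemetery $\partial_N$) but never on the first. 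So I would first estimate the probability that $X^1_N$ descends from $\delta N$ to, say, $p'$ (the neighborhood of the left block) before the first coordinate hits a cemetery point; this is a one-dimensional gambler's-ruin–type estimate and is $O(1)$, not small, so stage one alone is not enough.

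The real mechanism for smallness is stage two, and it is exactly Lemma \ref{l07}. Once the chain is in the region $1\le X^1_N\le p'$, with its ball $B_{p'n}(0,X^2_N)$ still inside $\bb D^0_N$, Lemma \ref{l07} says that it will hit the line $\bb L=\{(\sigma,0,a):1\le a<N\}$ at a vertical coordinate at least $m-p'n$ (or never hit it, having gone to the cemetery first) except with probability $C_0/n$. The point is that $\Sigma$ lives at vertical coordinate $0$, while to reach $\Sigma$ the chain must first cross the line $X^1_N=0$; and when it does so its second coordinate is still $\Theta(N)$ away from $0$ (because it started at $X^2_N=k\ge j>\delta N$ and, while $X^1_N$ was coming down, the second coordinate $X^2_N$ performs a random walk that has not had time to travel distance $\Theta(N)$ with non-negligible probability unless many steps elapse — during which the absorption at $\partial_N$ has a chance to act). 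Thus I would iterate Lemma \ref{l07}: every time the chain enters the strip $1\le X^1_N\le p'$ from the right with current height $m$, with probability $\ge 1-C_0/n$ its next visit to the line $X^1_N=0$ occurs at height $>m-p'n$; and each such "excursion into the left block and back out to $X^1_N=0$" either lands at a point of the cemetery $\partial_N$ (good) or lands on $\bb L$ with height still far from $0$. Taking $n=n_N\to\infty$ slowly (e.g. $n_N=\lfloor \delta N/(p')^2\rfloor^{1/2}$ or similar), a union bound over the $O(N/n_N)$ possible descents shows the total probability of ever landing on $\Sigma$ is $O(N/n_N)\cdot O(1/n_N)=O(N/n_N^2)\to0$.

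More carefully, I would set this up as follows. Let $n=n_N\to\infty$ with $n_N^2/N\to 0$ chosen at the end. Define stopping times $0=\tau_0\le\tau_1\le\dots$ as the successive times the chain enters the column region $\{X^1_N=p'\}$ (coming from the right) alternating with the times it either hits $\partial_N$ or returns to $\{X^1_N=0\}$. Between $\tau_i$ and $\tau_{i+1}$ the first coordinate is confined to $[0,p']$ while the ball of radius $p'n$ around the current point is inside $\bb D^0_N$ (this uses $j>\delta N$ and that we only need finitely many such excursions, so the second coordinate stays $\ge \delta N/2$ throughout — this confinement claim is the one genuinely fiddly bookkeeping point). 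Apply the strong Markov property and Lemma \ref{l07} at each $\tau_i$ to get: conditionally on $\mc F_{\tau_i}$, the probability that the chain hits $\Sigma$ before $\partial_N$ during $[\tau_i,\tau_{i+1}]$, or that it exits this excursion at $\{X^1_N=0\}$ with $X^2_N\le X^2_N(\tau_i)-p'n$, is at most $C_0/n$. Since $X^2_N$ starts $\ge\delta N$ above $0$, the number of such excursions before $X^2_N$ could possibly reach height $0$ is at least $\delta N/(p'n)$, so either the chain is absorbed at $\partial_N$ first, or one of the first $\delta N/(p'n)$ excursions realizes the exceptional event. A union bound gives
\begin{equation*}
\max_{\substack{(j,k)\in\bb D_N\\ j>\delta N}}\bs P_{(1,j,k)}\big[H(\Sigma)<H(\partial_N)\big]
\;\le\;\frac{\delta N}{p'n}\cdot\frac{C_0}{n}\;=\;\frac{C_0\,\delta N}{p'\,n^2}\;,
\end{equation*}
which tends to $0$ once $n=n_N$ is chosen with $N/n_N^2\to0$.

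The main obstacle I expect is not the probabilistic estimate itself — that is a clean iteration of Lemma \ref{l07} — but the geometric/combinatorial bookkeeping: verifying that throughout the (finitely many) relevant excursions the chain stays in a region where the hypotheses of Lemma \ref{l07} apply, i.e. that the relevant balls $B_{p'n}(0,m)$ with $a\ge0$ remain inside $\bb D^0_N$, and correctly handling the first coordinate's behavior at the left edge $j=-p$ (reflection) versus at $\{X^1_N=0\}$ (where it can enter $\Sigma$). One must also be a little careful that Lemma \ref{l07} is stated for starting points with $1\le l\le p'$, so the initial descent of $X^1_N$ from $\delta N$ down to $p'$ needs its own (trivial, gambler's-ruin) argument showing that this descent, conditioned not to hit $\partial_N$, is extremely unlikely to overshoot and reach $\{X^1_N=0\}$ directly without passing through the strip — or, more simply, one just waits for the first entrance into the strip $[1,p']$, which happens (if at all before $\partial_N$) with the chain's height still $\ge\delta N/2$ by a short second-coordinate displacement estimate. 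Finally, since $\varphi_N(-1,\cdot)=-\varphi_N(1,\cdot)$ and the chain started from $(1,j,k)$ can flip its sign only via $\ms L^\dagger_A$, the argument is insensitive to the sign coordinate and the stated maximum over $\sigma=1$ suffices.
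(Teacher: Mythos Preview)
Your plan has the right skeleton --- iterate Lemma~\ref{l07} to control the drop in the second coordinate --- but there are two genuine gaps.

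\emph{First, the initial descent.} Your claim that at the first entrance to the strip $\{1\le X^1_N\le p'\}$ the height satisfies $X^2_N\ge\delta N/2$ ``by a short second-coordinate displacement estimate'' is false. While $X^1_N$ descends from $j>\delta N$ to $p'$, the chain performs a two-dimensional random walk in the simplex for a time of order $N^2$, during which $X^2_N$ moves by order $N$; the exit height is spread over $[1,N]$ with nondegenerate harmonic measure, so $X^2_N<\delta N/2$ with probability bounded away from zero. The paper proves the correct (weaker) statement --- that $X^2_N(\tau)>\ell_N$ with high probability for any $\ell_N\ll N$ --- via a potential-theory estimate for two-dimensional random walk (comparing hitting probabilities of balls of radii $\ell_N$ and $2N$, which gives a bound of order $\log(1/\delta)/\log(N/\ell_N)$). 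This is not a displacement estimate and needs its own argument.

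\emph{Second, and more seriously, your union bound does not close.} You assert $\{H(\Sigma)<H(\partial_N)\}\subset E_1\cup\dots\cup E_K$ with $K=\delta N/(p'n)$, but this inclusion is false: on $E_1^c\cap\dots\cap E_K^c$ the height is still positive after $K$ excursions, yet nothing prevents the chain from hitting $\Sigma$ in excursion $K+1,K+2,\dots$. There is no a priori upper bound on the number of excursions --- each right-excursion may drop the height by as little as zero. The mechanism you are missing is the \emph{contraction} coming from $\sum_j r_j>0$: each time the chain visits $\bb L=\{X^1_N=0\}$, it is absorbed into $\partial_N$ within $p'$ steps with probability $1-\varrho>0$ uniformly in the height. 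The paper exploits this by deriving a recursion $T_N(b)\le\varrho\{T_N(b-p'q_N)+C_0/q_N\}$ for $T_N(b)=\max_{m\ge b}\bs P_{(\sigma,0,m)}[H(\Sigma)<H(\partial_N)]$, and iterating $r_N$ times yields $T_N(\ell_N)\le \varrho^{r_N}+C_0\varrho/(q_N(1-\varrho))$, which vanishes once $q_N,r_N\to\infty$ with $q_Nr_N\ll\ell_N$. Your union bound would only recover this if you summed $\sum_i\varrho^{i-1}\cdot C_0/n$ rather than $K\cdot C_0/n$; as written, you never invoke $\varrho<1$ and the argument does not terminate. You also need (as the paper does) a separate geometric bound $\varrho_1^{q_N}$ to control the height drop during sojourns in the left block $\{X^1_N<0\}$, where $\Sigma$ actually lives.
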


\begin{proof}
Fix $\delta>0$ and $(j,k) \in \bb D_N$ such that $j>\delta N$. 
Let 
\begin{equation*}
\partial^0_N \;=\; \{ (\sigma, 0, m) : \sigma = \pm 1 \,,\, 0<m<N\} 
\,\cup\, \big\{ (\sigma, k,N) :
\sigma = \pm 1 \,,\, -p\le k<N-1 \big\} \;,
\end{equation*}
and set $\tau = H(\partial^0_N)$. Clearly, $\tau<H(\Sigma)$. Hence, by
the strong Markov property, the probability appearing in the statement
of the lemma is equal to
\begin{equation}
\label{g17b}
\bs E_{(1,j,k)} \Big[ \bs P_{X_N(\tau)} \big[ H(\Sigma) < H(\partial_N) \big]\,
\Big]\;. 
\end{equation}
Up to time $\tau$, the process $X_N$ evolves as a symmetric random
walk on $\bb D_N$

Let $\ell_N$ be a sequence such that $\ell_N \ll N$. We claim that for
all $\delta>0$,
\begin{equation}
\label{g16b}
\lim_{N\to\infty} \max_{(l,m)} 
\, \bs P_{(1,l,m)} \big[ \, X^2_N(\tau) \le \ell_N \,
\big] \;=\;0\;,
\end{equation}
where the maximum is carried out over all pairs $(l,m)\in \bb D_N$
such that $l>\delta N$.  The proof of this statement relies on the
explicit form of the harmonic function for a $2$-dimensional Brownian
motion.

Up to time $\tau$, the process $Y_N(t) = (X^1_N(t), X^2_N(t))$ evolves
on the set $\triangle_N = \{(a,b) : 0\le a<b\le N\}$. Let $\Box_N =
\{0, \dots, N-1\}\times \{1, \dots, N\}$. Denote by $Z_N(t) =
(Z^1_N(t), Z^2_N(t))$ the random walk on $\Box_N$ which jumps from a
point to any of its neighbors at rate $1$. Let $\Phi_N: \Box_N \to
\triangle_N$ the projection defined by $\Phi_N(a,b)=(a,b)$ if
$(a,b)\in \triangle_N$, and $\Phi_N(a,b)=(b-1,a+1)$ otherwise. The
process $\Phi_N(Z_N(t))$ does not evolve as $Y_N(t)$ because the jumps
of $\Phi_N(Z_N(t))$ on the diagonal $\{(d,d+1) : 0\le d<N\}$ are
speeded-up by $2$, but the sequence of sites visited by both processes
has the same law. Therefore,
\begin{equation*}
\bs P_{(1,l,m)} \big[ \, X^2_N(\tau) \le \ell_N \,
\big] \;=\; \bs Q_{(l,m)} \big[ \, Z_N(\widehat \tau)
\in \, \angle_N \big]\;,
\end{equation*}
where $\bs Q_{(l,m)}$ represents the law of the process $Z_N$ starting
from $(l,m)$, $\widehat \tau$ the hitting time of the boundary of
$\Box_N$ and $\angle_N$ the set $\{(0,a) : 1\le a\le \ell_N\} \cup
\{(b,1) : 0\le b\le \ell_N-1\}$.

Denote by $B(r) \subset \bb R^2$, $r>0$, the ball of radius $r$
centered at the origin. In the event $\{Z_N(\widehat \tau) \in \,
\angle_N\}$, the process $Z_N$ hits the ball of radius $\ell_N$
centered at the origin before reaching the ball of radius $ 2N$
centered at the origin: $\{Z_N(\widehat \tau) \in \, \angle_N \}
\subset \{ H(B(\ell_N)) < H(B(2N)) \}$, so that
\begin{equation*}
\bs Q_{(l,m)} \big[ \, Z_N(\widehat \tau) \in \, \angle_N \big]
\;\le\;
\widehat{\bs Q}_{(l,m)} \big[ \, H(B(\ell_N)) < H(B(2N)) \, \big]\;.
\end{equation*}
By \cite[Exercice 1.6.8]{law91}, this later quantity is bounded by
\begin{equation*}
\frac{\log 2N - \log |(l,m)| + C \ell_N^{-1}}{\log 2N - \log \ell_N}
\end{equation*}
for some finite constant independent of $N$.  This proves \eqref{g16b}
because $|(l,m)|\ge \delta N$ and $\ell_N \ll N$.

We return to \eqref{g17b}. If $X_N(\tau)\in \partial_N$, the
probability vanishes. We may therefore insert inside the expectation
the indicator of the set $X_N(\tau)\not\in \partial_N$ It is also
clear that $\sigma_N(t)$ does not change before time $\tau$. Hence, by
\eqref{g16b}, \eqref{g17b} is bounded by
\begin{equation*}
\begin{aligned}
& \bs E_{(1,j,k)} \Big[ \bs 1\{ X_N(\tau) \in \bb L^+(\ell_N) \} \,
\bs P_{X_N(\tau)} \big[ H(\Sigma) < H(\partial_N) \big]\,
\Big] \;+\; o_N(1) \\
&\quad \le\; \max_{m \ge \ell_N } \bs P_{(1, 0,m)} 
\big[ H(\Sigma) < H(\partial_N) \big] \;+\; o_N(1) \;,
\end{aligned}
\end{equation*}
where $\bb L^+(r) = \{(\sigma, 0,l): \sigma = \pm 1 \,,\, l \ge r\}$,
$o_N(1)$ converges to $0$ as $N\to\infty$, uniformly over all
$(j,k)\in \bb D_N$, $j>\delta N$, and $\ell_N$ is a sequence such that
$\ell_N\ll N$. Hence, up to this point, we proved that
\begin{equation}
\label{g18b}
\max_{\substack{(j,k) \in \bb D_N \\ j>\delta N}}
\bs P_{(1,j,k)} \big[ H(\Sigma) < H(\partial_N) \big] 
\; \le\; \max_{m \ge \ell_N } \bs P_{(1, 0,m)} 
\big[ H(\Sigma) < H(\partial_N) \big] \;+\; o_N(1)\;, 
\end{equation}
where $o_N(1)$ converges to $0$ as $N\to\infty$, and $\ell_N$ is a
sequence such that $\ell_N\ll N$. \smallskip

It remains to estimate the probability appearing in the previous
formula.  If $m>p'$, starting from $(1, 0,m)$, in $p'$ jumps the
process $X_N(t)$ can not hit $\Sigma$. Hence, if $\tau(k)$ stands for
the time of the $k$-th jump, by the strong Markov property,
\begin{align*}
& \bs P_{(1, 0,m)} \big[ H(\Sigma) < H(\partial_N) \big] \;=\;
\bs P_{(1, 0,m)} \big[ H(\partial_N) > \tau(p')  \,,\, 
H(\Sigma) < H(\partial_N) \big] \\
&\quad \;=\;
\bs E_{(1, 0,m)} \Big[ \, \bs 1\{H(\partial_N) > \tau(p')\}  \,
\bs P_{X_N(\tau(p'))}  \big[ H(\Sigma) < H(\partial_N) \big]\, \Big]\;.
\end{align*}
Let $\varrho = \bs P_{(1, 0,m)} [ H(\partial_N) > \tau(p')] = \bs
P_{(-1, 0,m)} [ H(\partial_N) > \tau(p')]$. Note that this quantity
does not depend on $m$ in the set $\{(\sigma, 0,b) : \sigma = \pm 1
\,,\, b>p'\}$. Moreover, as $\sum_j r_j>0$, $\varrho<1$. With this
notation, the previous expression is less than or equal to
\begin{equation*}
\varrho\, \max_{\sigma = \pm 1}\,
\max_{a,b}\, \bs P_{(\sigma, a,b)} \big[ H(\Sigma) < H(\partial_N) \big]\;, 
\end{equation*}
where the maximum is carried over all $(a,b)$ which can be attained in
$p'$ jumps from $(0,m)$. This set is contained in the set $\{(c,d) :
-p\le c \le p' \,,\, m-p' \le d\le m+p'\}$.

Recall the definition of the set $\bb L$ introduced just before the
statement of Lemma \ref{l07}.  If $a\ge 1$, the process $X_N(t)$ hits
the set $\bb L$ before the set $\Sigma$. Hence, by Lemma \ref{l07}, if
$q_N$ is an increasing sequence to be defined later, by the strong
Markov property, for $1\le a\le p'$, $b\gg q_N$,
\begin{align*}
& \bs P_{(\sigma, a,b)} \big[ H(\Sigma) < H(\partial_N) \big] \\
&\qquad  \le \; \frac{C_0}{q_N} \;+\;
\bs P_{(\sigma, a,b)} \big[ X^2_N(H(\bb L)) \ge  b - p'q_N \,,\, H(\Sigma) <
H(\partial_N) \big] \\
&\qquad  \le\; \frac{C_0}{q_N} \;+\; \max_{b'\ge b - p'q_N}
\bs P_{(\sigma, 0,b')} \big[ H(\Sigma) < H(\partial_N) \big]\;.
\end{align*}

On the other hand, if $a\le -1$, let $\bb C_d = \{(\sigma, c,d) :
\sigma = \pm 1 \,,\, -p\le c<0 \}$. In this case, starting from
$(a,b)$, in $p'$ jumps the process $X_N(t)$ may hit the set $\bb
L$. Hence, by the strong Markov property, for $a<0$, $b>np'$, $\bs
P_{(\sigma, a,b)} \big[ H(\bb C_{b-np'}) < H(\bb L) \wedge
H(\partial_N) \big] \le \varrho^n_1$ for some
$\varrho_1<1$. Therefore, by the strong Markov property, for $a<0$ and
$b\gg q_N$,
\begin{align*}
& \bs P_{(\sigma, a,b)} \big[ H(\Sigma) < H(\partial_N) \big] \\
&\qquad  \le \;  
\bs P_{(\sigma, a,b)} \big[ H(\bb L) \wedge H(\partial_N)  < 
H(\bb C_{b-p'q_N}) \,,\,  H(\Sigma) < H(\partial_N) \big] \;+\; \varrho^{q_N}_1 \\
&\qquad \le\; \max_{\sigma'=\pm 1}\, 
\max_{b'\ge b - p'q_N} \bs P_{(\sigma', 0,b')} \big[H(\Sigma) < H(\partial_N) \big] 
\;+\; \varrho^{q_N}_1 \;.
\end{align*}

Let 
\begin{equation*}
T_N(b) \;=\; \max_{\sigma=\pm 1}\, \max_{c\ge b } \bs P_{(\sigma,
  0,c)} \big[H(\Sigma) < H(\partial_N) \big] \;. 
\end{equation*}
Note that the first term appearing on the right-hand side of
\eqref{g18b} is $T_N(\ell_N)$ because the probability does not depend
on the value of $\sigma$.  By the previous arguments, there exists a
finite constant $C_0$ such that for all $b\gg q_N$,
\begin{equation*}
T_N(b) \;\le\;  \varrho\, \Big\{ T_N(b-p'q_N) \;+\;
\frac{C_0}{q_N}\Big\}
\end{equation*}
because $\varrho^{q}_1 \le 1/q$ for all $q$ large enough.  Iterating
this inequality $r_N$ times, we get that for all $b\gg q_Nr_N$,
\begin{equation*}
T_N(b) \;\le\;  \frac{C_0}{q_N} \{ \varrho \,+\, \cdots \,+\,
\varrho^{r_N} \} \;+\; \varrho^{r_N} \;\le\; 
\frac{\varrho}{1-\varrho}\, \frac{C_0}{q_N}
\;+\; \varrho^{r_N} \;. 
\end{equation*}
In view of \eqref{g18b} and of the previous estimate, to complete the
proof of the lemma, it remains to choose sequences $q_N$, $r_N$ such
that $q_N\to \infty$, $r_N\to\infty$, $r_N\, q_N \ll \ell_N$.
\end{proof}

\begin{lemma}
\label{l02}
Assume that $\sum_j r_j>0$. Then, for every $\delta>0$,
\begin{equation*}
\lim_{N\to\infty} \max_{\substack{(j,k) \in \bb D_N \\ j>\delta N}}\,
\big|\, \varphi^{(1)}_N(1,j,k) \, \big| \; =\; 0\;. 
\end{equation*}
\end{lemma}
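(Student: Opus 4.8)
The plan is to use the probabilistic representation
\[
\varphi^{(1)}_N(1,j,k) \;=\; \bs E_{(1,j,k)}\Big[ \int_0^{H_N} F_N(X_N(s))\, ds\Big],
\]
together with the explicit form \eqref{g09} of $F_N$, which shows that $F_N$ is supported on the diagonal cells $(\sigma,m,m+1)$ with $-p\le m\le -1$ (since $[\rho_N(m+1)-\rho_N(m)]^2$ is of order $1/N^2$ for $m\ge 0$ by \eqref{f07}, and strictly it is nonzero only near the left boundary where the density profile is not affine). More precisely, $|F_N(\sigma,m,m+1)|$ is bounded by a constant uniformly in $m<0$, and is $O(1/N^2)$ for $m\ge 0$. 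Hence the integral is controlled by the time the chain $X_N$ spends in a bounded neighborhood of the ``corner'' region $\Sigma = \{(\sigma,l,0): -p\le l<0\}$, plus a negligible $O(1/N)$ contribution from the rest of the trajectory.

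First I would split the integral according to whether the chain has reached the set $\Sigma$ (equivalently, a bounded neighborhood of the left-boundary diagonal) or not. Before hitting $\Sigma$, the only contribution to $\int F_N$ comes from the far-from-boundary diagonal cells, where $|F_N| = O(1/N^2)$; since $H_N$ is at most polynomial in $N$ in expectation (the chain is a random walk on a region of diameter $O(N)$, absorbed at the boundary), this piece is $o(1)$ — actually one should be slightly careful and bound $\bs E[H_N]$ by $O(N^2)$, giving an $O(1)$ bound, so instead I would bound the number of visits to each far diagonal cell and use that there are $O(N)$ such cells each contributing $O(1/N^2)$ times an $O(1)$ expected local time, or more cleanly compare with the Dirichlet problem directly. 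Once the chain hits $\Sigma$, the remaining contribution is bounded by a constant times the expected time spent near $\Sigma$ before absorption, which is $O(1)$ deterministically in $N$ but multiplied by the probability of ever reaching $\Sigma$ starting from $(1,j,k)$ with $j>\delta N$. That probability is exactly $\bs P_{(1,j,k)}[H(\Sigma)<H(\partial_N)]$, which by Lemma \ref{l08} tends to $0$ uniformly over $j>\delta N$.

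So the key steps, in order, are: (i) write $\varphi^{(1)}_N$ via the Feynman–Kac / Dirichlet formula above; (ii) bound the contribution of the trajectory before $H(\Sigma)$ using $|F_N|=O(1/N^2)$ off the left boundary together with a crude bound on the relevant local times (this is the routine part); (iii) bound the contribution after $H(\Sigma)$ by $C\cdot\bs P_{(1,j,k)}[H(\Sigma)<H(\partial_N)]$, since once near the left boundary the chain is absorbed at $\partial_N$ in expected time $O(1)$ and $|F_N|\le C$ there; (iv) invoke Lemma \ref{l08} to make step (iii) vanish as $N\to\infty$, uniformly in $j>\delta N$.

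The main obstacle I expect is step (ii): controlling the cumulative contribution of the $O(1/N^2)$-sized values of $F_N$ over the whole pre-$H(\Sigma)$ trajectory, since the naive bound $\bs E[H_N]\cdot\|F_N\|_\infty$ would only give $O(N^2)\cdot O(1/N^2)=O(1)$, not $o(1)$. The resolution is that $F_N$ is supported only on the thin diagonal strip $\{k=j+1\}$, which the two-dimensional random walk visits only $O(N)$ times in expectation (each individual cell $O(1)$ times, and the walk escapes the diagonal strip), so the pre-$H(\Sigma)$ contribution is really $O(N)\cdot O(1/N^2)=O(1/N)=o(1)$; making this rigorous requires a Green's function estimate for the killed walk on $\bb D_N$, or alternatively absorbing this bound into a separate lemma. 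A cleaner route, which I would adopt if the local-time bookkeeping becomes unwieldy, is to decompose $F_N = F_N^{\mathrm{near}} + F_N^{\mathrm{far}}$ with $F_N^{\mathrm{far}}$ supported on $\{j\ge 0\}$ and satisfying $\|F_N^{\mathrm{far}}\|_\infty=O(1/N^2)$, solve the two Dirichlet problems separately, bound the $F_N^{\mathrm{far}}$-solution by $\|F_N^{\mathrm{far}}\|_\infty \cdot \bs E[H_N] = O(1)$ uniformly — wait, that still isn't $o(1)$; so one genuinely needs the refinement that $F_N^{\mathrm{far}}$ is supported on a lower-dimensional set. I would therefore prove a Green's function bound $\sum_{m\ge 0}\bs E_{(1,j,k)}[\text{local time at }(\cdot,m,m+1)]\le C N$ and conclude.
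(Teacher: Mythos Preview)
Your plan is essentially the paper's, and the key ingredients you identify --- the $O(N)$ bound on the expected occupation time of the bulk diagonal and Lemma~\ref{l08} --- are exactly what the paper uses. The paper proves your Green's function bound for step~(ii) as a self-contained claim (their display \eqref{g13}): before the first visit to the line $\{j=0\}$ the chain is a plain symmetric random walk on $\bb D_N$ and the expected time spent on the far diagonal is $j(N-k)/(N-1)\le C_0N$ by an explicit harmonic-function computation; after the first visit to $\{j=0\}$ one restarts and, because $\sum_j r_j>0$, the chain is absorbed with probability bounded below on each return, giving a geometric sum that stays $O(1)$.

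There is, however, a genuine gap in your step~(iii). The claim ``once near the left boundary the chain is absorbed at $\partial_N$ in expected time $O(1)$'' is false: from a point of $\Sigma$ the chain can wander back into the bulk and take time of order $N^2$ to be absorbed. What you actually need is not that $\bs E_x[H_N]=O(1)$ for $x\in\Sigma$, but that $\bs E_x\big[\int_0^{H_N}|F_N(X_N(s))|\,ds\big]=O(1)$, and this requires separating the near-boundary diagonal $D_{N,p}=\{(\sigma,l,l+1):-p\le l\le 0\}$ from the rest. The paper sidesteps this by decomposing \emph{by the support of $F_N$} rather than by the stopping time $H(\Sigma)$: write $|F_N|\le (C_0/N^2)\,\bs 1_{D_N\setminus D_{N,p}} + C_0\,\bs 1_{D_{N,p}}$ and treat the two pieces over the \emph{entire} interval $[0,H_N]$. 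The first piece uses the $O(N)$ diagonal-time bound directly. For the second, one observes that $\Sigma$ must be hit before $D_{N,p}$, so the $D_{N,p}$-contribution is bounded by $\bs P_{(1,j,k)}[H(\Sigma)<H(\partial_N)]\cdot\max_{x\in D_{N,p}}\bs E_x\big[\int_0^{H_N}\bs 1_{D_{N,p}}\big]$; the maximum is then shown to be $O(1)$ by a return-time argument that again relies on $\sum_j r_j>0$ (the chain escapes to $\partial_N$ with uniformly positive probability on each return to $D_{N,p}$). Your decomposition by $H(\Sigma)$ can be made to work, but you would need both of these bounds anyway, so the paper's split is cleaner.
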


\begin{proof}
Fix $(j,k) \in \bb D_N$ such that $0<j<k$.
Denote by $D_N$ the diagonal, $D_N = \{(\sigma, l,l+1): \sigma = \pm 1
\,,\, -p\le l <N-1\}$, and by $D_{N,p}$ its restriction to
$\Lambda^*_p$, $D_{N,p} = \{(\sigma, l,l+1): \sigma = \pm 1 \,,\,
-p\le l \le 0\}$.  By Lemma \ref{ll03}, there exists a finite constant
$C_0$ such that for all $(l,m)\in\bb D_N$,
\begin{equation*}
|F_N (\sigma, l.m)| \,\le\, 
\frac {C_0}{N^2}\, \bs 1\{D_N\} (\sigma,
l,m) \;+\; C_0 \, \bs 1\{D_{N,p}\} (\sigma, l,m) \;.
\end{equation*}
Therefore, recalling that $H_N$ was defined as the hitting time of the
boundary $\partial \Xi_N$,
\begin{equation}
\label{g12}
\begin{aligned}
\big|\, \varphi^{(1)}_N(1,j,k) \, \big| \, 
& \le\, \frac {C_0}{N^2}\, 
\bs E_{(1,j,k)} \Big[ \int_0^{H_N}  
\bs 1\{D_N\setminus D_{N,p}\} (X_N(s)) \, ds \Big] \\
\; & +\; C_0 \, \bs E_{(1,j,k)} \Big[ \int_0^{H_N}  
\bs 1\{D_{N,p}\} (X_N(s)) \, ds \Big] \;.
\end{aligned}
\end{equation}

We claim that there exists a finite constant $C_0$ such that
\begin{equation}
\label{g13}
\max_{\sigma = \pm 1} \max_{\substack{(j,k) \in \bb D_N \\ 0<j<k}}\, 
\bs E_{(\sigma,j,k)} \Big[ \int_0^{H_N}  \bs 1\{D_N\setminus D_{N,p}\} 
(X_N(s)) \, ds \Big] \;\le\;  C_0 \,N \;.
\end{equation}
To bound this expectation, let $R_N = \{(\sigma, 0,m) : \sigma = \pm 1
\,,\, 2\le m \le N-1\}$, and denote by $G_N$ the hitting time of the
set $R_N \cup \partial \, \Xi_N$. Note that starting from $(j,k)$,
$0<j<k$, only the component $\big\{ (\sigma, l,N) : -p\le l<N-1
\big\}$ of the set $\partial\, \Xi_N$ can be attained before the set
$R_N$. Moreover, before $G_N$ the process $X_N(t)$ behaves as a
symmetric random walk.

Rewrite the expectation in \eqref{g13} as
\begin{equation}
\label{g14}
\bs E_{(\sigma,j,k)} \Big[ \int_0^{G_N}  \bs 1\{D_N\setminus D_{N,p}\} (X_N(s)) \, ds
\Big] \;+\; 
\bs E_{(\sigma,j,k)} \Big[ \int_{G_N}^{H_N} \bs 1\{D_N\setminus D_{N,p}\} (X_N(s)) \, ds
\Big]\;.
\end{equation}
Since before time $G_N$ the process $X_N(t)$ evolves as a symmetric
random walk, the first expectation can be computed. It is equal to $
j(N-k)/(N-1) \le C_0\, N$. By the strong Markov property, the second
expectation is bounded above by
\begin{equation*}
\max_{2\le m<N}
\bs E_{(\sigma,0,m)} \Big[ \int_{0}^{H_N}  \bs 1\{D_N\setminus D_{N,p}\} (X_N(s)) \, ds
\Big]  \;.
\end{equation*}

Denote by $\Upsilon_N$ the previous expression and by $G^+_N$ the
return time to $R_N \cup \partial \, \Xi_N$. By the strong Markov
property, the previous expectation is bounded above by
\begin{align*}
\bs E_{(\sigma,0,m)} \Big[ \int_{0}^{G^+_N}  \bs 1\{D_N\setminus D_{N,p}\} (X_N(s)) \, ds
\Big]  \;+\; \Upsilon_N\, \max_{0\le m'<N-1}
\bs P_{(\sigma,0,m')} \big[ \, G^+_N < H_N \, \big] \;.
\end{align*}
The first term vanishes unless the first jump of $X_N(s)$ is to
$(\sigma,1,m)$. Suppose that this happens. Starting from
$(\sigma,1,m)$, up to time $G^+_N$, $X_N(s)$ behaves as a symmetric
random walk. Hence, by explicit formula for the first term in
\eqref{g14}, the expectation is equal to $(N-m)/(N-1)\le 1$. Hence,
\begin{equation*}
\Upsilon_N  \;\le\;  \max_{0\le m'<N-1} \frac 1{\bs
  P_{(\sigma,0,m')} \big[ \, H_N < G^+_N \, \big] } \;\cdot
\end{equation*}
As $\sum_j r_j>0$, $P_{(\sigma,0,m')} [ \, H_N < G^+_N \, ]$ is
bounded below by the probability that the process jumps to a site
$(\sigma, l,m')$ such that $r_l>0$ and then hits the set $\partial\,
\Xi_N$. Hence, there exists a positive constant $c_0$ such that
$P_{(\sigma,0,m')} [ \, H_N < G^+_N \, ] \ge c_0$ for all $2\le m'\le
N-1$. This proves that $\Upsilon_N \le C_0$. Assertion \eqref{g13} follows
from this bound and the estimate for the first term in \eqref{g14}. 

We turn to the second term in \eqref{g12}. Recall the notation
introduced just before Lemma \ref{l08}. Since the integrand vanishes
before hitting the set $D_{N,p}$ and since the set $\Sigma$ is
attained before $D_{N,p}$, for $j>\delta N$
\begin{align*}
& \bs E_{(1,j,k)} \Big[ \int_0^{H_N}  \bs 1\{D_{N,p}\} (X_N(s)) \, ds
\Big]  \\
&\quad \;=\; \bs E_{(1,j,k)} \Big[ \bs 1\{ H(\Sigma) <
H(\partial_N)\} \, \int_{H(D_{N,p})}^{H_N}  \bs 1\{D_{N,p}\} (X_N(s)) \, ds
\Big] \;.
\end{align*}
Applying the strong Markov property twice, we bound this expression by
\begin{equation*}
\bs P_{(1,j,k)} \big[ \, H(\Sigma) < H(\partial_N) \, \big] \,
\max_{(\sigma, a,b) \in D_{N,p}} \bs E_{(\sigma,a,b)} 
\Big[ \int_0^{H_N}  \bs 1\{D_{N,p}\} (X_N(s)) \, ds \Big]\;.
\end{equation*}
By Lemma \ref{l08} the first term vanishes as $N\to\infty$, uniformly
over $(j,k)\in \bb D_N$, $j>\delta N$.

It remains to show that there exists a finite constant $C_0$ such that
\begin{equation}
\label{g15}
\max_{(\sigma,j,k) \in D_{N,p}}\, 
\bs E_{(\sigma,j,k)} \Big[ \int_0^{H_N}  
\bs 1\{D_{N,p}\} (X_N(s)) \, ds \Big] 
\;\le\;  C_0 \;.
\end{equation}
Denote this expression by $\Upsilon_N$, and by $J^+_N$ the return time
to $D_{N,p}$. For $(\sigma,j,k) \in D_{N,p}$, the previous expectation is
less than or equal to
\begin{equation*}
C_0 \;+\; \Upsilon_N \, \bs P_{(\sigma,j,k)} \big[ \, J^+_N < H_N \,
\big] \;. 
\end{equation*}
As in the first part of the proof, since $\sum_j r_j>0$, the process
hits $\partial\,\Xi_N$ before returning to $D_{N,p}$ with a
probability bounded below by a strictly positive constant independent
of $N$: $\min_{(\sigma, j,k) \in D_{N,p}}\, \bs P_{(\sigma,j,k)} [ \, H_N <
J^+_N \, ] \ge c_0>0$. Therefore, $\Upsilon_N \le C_0$. This completes
the proof of assertion \eqref{g15} and the one of the lemma.
\end{proof}

\begin{lemma}
\label{l06}
Assume that $\sum_j r_j>0$. Then, for every $\delta>0$,
\begin{equation*}
\lim_{N\to\infty} \max_{\substack{(j,k) \in \bb D_N \\ j>\delta N}}\, 
\big|\, \varphi^{(2)}_N(1,j,k) \, \big| \; =\;  0\;.
\end{equation*}
\end{lemma}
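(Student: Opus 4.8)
The plan is to exploit the probabilistic representation $\varphi^{(2)}_N(1,j,k) = \bs E_{(1,j,k)}[\, b_N(X_N(H_N))\,]$ recorded above, together with the observation that $b_N$ lives on a negligible part of the boundary. Indeed, by \eqref{g08}--\eqref{g07} the function $b_N$ vanishes on $\partial_N$, and by \eqref{g06} it equals $\sigma\,\rho_N(m)\,[1-\rho_N(m)]$ --- hence has absolute value at most $1/4$ --- on the ``collision'' set $\mathcal D := \{(\sigma,m,m): \sigma=\pm1\,,\,-p\le m\le 0\}$. Since $b_N=0$ on all of $\partial\Xi_N$ outside $\mathcal D$,
\[
\big|\, \varphi^{(2)}_N(1,j,k)\, \big| \;\le\; \tfrac14\, \bs P_{(1,j,k)}\big[\, X_N(H_N)\in\mathcal D \,\big]\;,
\]
so it suffices to bound this hitting probability uniformly over $(j,k)\in\bb D_N$ with $j>\delta N$.

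First I would record the geometric fact that the chain $X_N$ cannot reach $\mathcal D$ without first visiting the segment $\Sigma=\{(\sigma,l,0):\sigma=\pm1\,,\,-p\le l<0\}$. To verify this one inspects the pieces of $\ms L^\dagger_N = \ms L^{\rm rw}_N + \ms L^\dagger_R + \ms L^\dagger_{r,N} + \ms L^\dagger_C + \ms L^\dagger_A$: the random walk $\ms L^{\rm rw}_N$ reflects off the near-diagonal and never produces a configuration $(\sigma,m,m)$, while $\ms L^\dagger_R$ and $\ms L^\dagger_{r,N}$ only send the chain into $\partial_N$. Hence every jump into $\mathcal D$ comes from $\ms L^\dagger_C$ or $\ms L^\dagger_A$, and such a jump originates from a state $(\sigma,j,k)$ with $j,k\in\Lambda^*_p$, in particular with second coordinate $X^2_N\le 0$. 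On the other hand, while $X^2_N\ge1$ only $\ms L^{\rm rw}_N$ can modify the second coordinate --- the rates $c_{k,m}$, $a_{k,m}$ vanish unless $k\in\Lambda^*_p$ --- and it moves it by $\pm1$. Therefore $X^2_N$ cannot skip the value $0$: the first instant at which $X^2_N\le0$ the chain sits at some $(l,0)$ with $l<0$, i.e.\ on $\Sigma$, and this instant precedes $H_N$ because the state from which the eventual jump into $\mathcal D$ is made already has $X^2_N\le0$. As $\Sigma\subset\bb D_N$ is disjoint from $\partial\Xi_N$ and $H_N\le H(\partial_N)$, this yields the inclusion $\{X_N(H_N)\in\mathcal D\}\subseteq\{H(\Sigma)<H(\partial_N)\}$.

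Combining the two steps, $|\varphi^{(2)}_N(1,j,k)| \le \tfrac14\,\bs P_{(1,j,k)}[H(\Sigma)<H(\partial_N)]$, and Lemma \ref{l08} --- whose hypothesis $\sum_j r_j>0$ is in force here --- makes the right-hand side tend to $0$ as $N\to\infty$, uniformly over $(j,k)\in\bb D_N$ with $j>\delta N$; this is the assertion of the lemma. The only step I expect to require care is the bookkeeping in the second paragraph: reading off, case by case from the formulas of Section \ref{sec03}, that no jump of $\ms L^\dagger_N$ enters $\mathcal D$ except from the left block $\{(\sigma,j,k):-p\le j<k\le0\}$ through $\ms L^\dagger_C$ or $\ms L^\dagger_A$, and that the second coordinate of $X_N$ indeed moves by unit steps as long as it stays positive. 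Everything else reduces to Lemma \ref{l08}.
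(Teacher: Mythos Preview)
Your proof is correct and follows essentially the same route as the paper's own argument: bound $|\varphi^{(2)}_N|$ by the probability that $X_N$ is absorbed on the diagonal piece of $\partial\Xi_N$, observe that this event forces a prior visit to $\Sigma$, and invoke Lemma \ref{l08}. The paper's proof is simply the terse two-line version of what you wrote, using the cruder bound $|b_N|\le 1$ in place of your $1/4$; the geometric verification you spell out in your second paragraph is exactly what underlies the paper's one-line inequality $|\varphi^{(2)}_N(1,j,k)| \le \bs P_{(1,j,k)}[H(\Sigma)<H(\partial_N)]$.
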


\begin{proof}
Fix $\delta>0$ and $(j,k) \in \bb D_N$ such that $j>\delta N$. 
Recall the notation introduced just before Lemma \ref{l08}.
In view of the definition of $b_N$, given in \eqref{g08}, \eqref{g07},
\eqref{g06}, 
\begin{equation*}
| \varphi^{(2)}_N(1,j,k) |\; \le\; \bs P_{(1,j,k)} \big[ \,
H(\Sigma) < H(\partial_N)  \, \big]\;.
\end{equation*}
The assertion of the lemma follows from Lemma \ref{l08}.
\end{proof}

\begin{proof}[Proof of Theorem \ref{mt0}]
The proof is straightforward.  It is enough to prove the result for
continuous functions with compact support in $(0,1)$.  Fix such a
function $G$ and let $\delta>0$ such that the support of $G$ is
contained in $[\delta, 1-\delta]$. By Schwarz inequality and by
\eqref{g10}, the square of the expectation appearing in the statement
of the theorem is bounded above by 
\begin{equation*}
C(G)\, \Big(\frac 1{N} \sum_{k=1}^{N-1} 
\big|\, \rho_N(k) - \bar u(k/N)\,\big| \Big)^2
\;+\; \frac {C(G)}{N^2} \sum_{j,k=1}^{N-1} 
G(j/N)\, G(k/N) \, \varphi_N(1,j,k) \;,
\end{equation*}
where $\varphi_N$ has been introduced in \eqref{g10} and $C(G)$ a
finite constant which depends only on $G$.  By Lemmata \ref{ll03},
\ref{l02} and \ref{l06} this expression vanishes as $N\to\infty$.
\end{proof}

\begin{remark}
\label{rm0bis}
Assume that $\sum_{j\in\Lambda^*_p} r_j=0$ and
$\sum_{j,k\in\Lambda^*_p} a_{j,k} >0$. The proof that the correlations
vanish, presented in Lemmata \ref{l02} and \ref{l06}, requires a new
argument based on the following observation. Under the conditions of
this remark, the boundary $\partial\,\Xi_N$ of the set $\Xi_N$ is
reduced to the set
\begin{equation*}
\big\{ (\sigma, k,k) :
\sigma = \pm 1 \,,\, -p\le k\le 0 \big\} 
\,\cup\,
\big\{ (\sigma, k,N) :
\sigma = \pm 1 \,,\, -p\le k<N-1 \big\}\;.
\end{equation*}
To prove that the correlations vanish, one has to show that by the
time the process $X_N(t)$ hits the set $\{ (\sigma, k,k) : \sigma =
\pm 1 \,,\, -p\le k\le 0 \}$ its coordinate $\sigma$ has equilibrated
and takes the value $\pm 1$ with probability close to $1/2$.
\end{remark}

\section{Proof of Theorem \ref{mt1}}
\label{sec02}

The proof of Theorem \ref{mt1} is based on a graphical construction of
the dynamics through independent Poisson point processes. 

Recall the definition of the rates $A$, $B$ introduced in \eqref{f12},
that $\Omega_p = \{0,1\}^{\{1, \dots, p-1\}}$, and that
$\lambda (0,\xi) = c (0,\xi)-A$, $\lambda (1,\xi) = c (1,\xi)-B$,
$\xi\in \Omega_p$. Further, recall that we assume 
\begin{equation*}
(p-1) \sum_{\xi\in\Omega_p} \{\, \lambda(0,\xi) + \lambda(1,\xi)\, \} 
\;<\; A \,+\, B \;. 
\end{equation*}

The left boundary generator can be rewritten as 
\begin{align*}
(L_{l} f)(\eta) \; &=\; A\, [f(T^1\eta)-f(\eta)] \;+\; B\,
[f(T^0\eta)-f(\eta)] \\
& +\; \sum_{a=0}^1 \sum_{\xi\in \Omega_p} \lambda(a,\xi) \,
\bs 1 \{\Pi_p \eta = (a,\xi)\}\, [f(T^{1-a}\eta)-f(\eta)]\;,
\end{align*}
provided $\Pi_p : \Omega_N \to \Omega_p^\star := \{0,1\}^{\{1, \dots,
  p\}}$ represents the projection on the first $p$ coordinates:
$(\Pi_p\eta)_k = \eta_k$, $1\le k\le p$. Similarly, the right boundary
generator can be expressed as
\begin{equation*}
(L_{r,N} f)(\eta) \; =\; \beta\, [f(S^1\eta)-f(\eta)]
\;+\; (1-\beta)\, [f(S^0\eta)-f(\eta)] \;,
\end{equation*}
where
\begin{equation*}
(S^a\eta)_k \;=\;
\begin{cases}
a & \text{if $k=N-1$,} \\
\eta_k & \text{otherwise.}
\end{cases}
\end{equation*}

\subsection{Graphical construction}
\label{sec:GraphicalConstruction}
Let $P:=2^{p-1}=\abs{\Omega_p}$.  We present in this subsection a
graphical construction of the dynamics based on $N+2P+2$ independent
Poisson point processes defined on $\bb R$.
\begin{itemize}
\item [--] $(N-2)$ processes $\mf N_{i,i+1}(t)$, $1\le i\le N-2$, with
  rate $1$.
\item [--] $2$ processes $\mf N^{+,l}(t), \; \mf N^{-,l}(t)$ with
  rates $A$, $B$, respectively, representing creation and annihilation
  of particles at site $1$, regardless of the boundary condition.
\item [--] $2P$ processes $\mf N_{(a,\xi)}(t)$, $a=0$, $1$,
  $\xi\in\Omega_p$, with rates $\lambda(a,\xi)$ to take into
  account the influence of the boundary in the creation and
  annihilation of particles at site $1$.
\item [--] $2$ processes $\mf N^{+,r}(t), \; \mf N^{-,r}(t)$, with
  respective rates $\beta$ and $1-\beta$, to trigger creation and
  annihilation of particles at site $N-1$.
\end{itemize}

Place arrows and daggers on $\{1, \dots, N-1\} \times \bb R$ as
follows. Whenever the process $\mf N_{i,i+1}(t)$ jumps, place a
two-sided arrow over the edge $(i,i+1)$ at the time of the jump to
indicate that at this time the occupation variables $\eta_i$,
$\eta_{i+1}$ are exchanged. Analogously, each time the process $\mf
N_{(a,\xi)}(t)$ jumps, place a dagger labeled $(a,\xi)$ over the
vertex $1$. Each time $\mf N^{\pm,l}(t)$ jumps, place a dagger labeled
$\pm$ over the vertex $1$.  Finally, each time $\mf N^{\pm,r}(t)$
jumps, place a dagger labeled $\pm$ over the vertex $N-1$.

Fix a configuration $\zeta\in\Omega_N$ and a time $t_0\in \bb
R$. Define a path $\eta(t)$, $t\ge t_0$, based on the configuration
$\zeta$ and on the arrows and daggers as follows.  By independence, we
may exclude the event that two of those processes jump
simultaneously. Let $\tau_1>t_0$ be the first time a mark (arrow or
dagger) is found after time $t_0$. Set $\eta(t) = \zeta$ for any
$t\in[t_0,\tau_1)$. If the first mark is an arrow labeled $(i,i+1)$,
set $\eta(\tau_1) = \sigma^{i,i+1} \eta(\tau_1-)$. If the mark is a
dagger labeled $(a,\xi)$, set $\eta(\tau_1) = T^{a} \eta(\tau_1-)$ if
$\Pi_p\eta(\tau_1-) = (a,\xi)$. Otherwise, let $\eta(\tau_1) =
\eta(\tau_1-)$. Finally, if the mark is a dagger on site $1$,
resp. $N-1$, labeled $\pm$, set $\eta(\tau_1) = T^{[1 \pm 1]/2}
\eta(\tau_1-)$, resp. $\eta(\tau_1) = S^{[1 \pm 1]/2} \eta(\tau_1-)$.

At this point, the path $\eta$ is defined on the segment
$[t_0,\tau_1]$.  By repeating the previous construction on each
time-interval between two consecutive jumps of the Poisson point
processes, we produce a trajectory $(\eta(t) : t\ge t_0)$. We leave
the reader to check that $\eta(t)$ evolves as a continuous-time Markov
chain, started from $\zeta$, whose generator is the operator $L_N$
introduced in \eqref{f13}.

\subsection{Dual Process}
\label{sec:devices}

To determine whether site $1$ is occupied or not at time $t=0$ we have
to examine the evolution backward in time. This investigation, called
the revealment process, evolves as follows.

Let mark mean an arrow or a dagger. To know the value of $\eta_1(0)$
we have to examine the past evolution.  Denote by $\tau_1<0$ the time
of the last mark involving site $1$ before $t=0$. By the graphical
construction, the value of $\eta_1$ does not change in the time
interval $[\tau_1, 0]$.

Suppose that the mark at time $\tau_1$ is an arrow between $1$ and
$2$. In order to determine if site $1$ is occupied at time $0$ we need
to know if site $2$ is occupied at time $\tau_1-$. The arrows are thus
acting as a stirring dynamics in the revealment process. Each time an
arrow is found, the site whose value has to be determined changes.

If the mark at time $\tau_1$ is a dagger labeled $+$ at site $1$,
$\eta_1(0)=\eta_1(\tau_1)=1$, and we do not need to proceed
further. Analogously, daggers labeled $-$ or $+$ at sites $1$, $N-1$
reveal the value of the occupation variables at these sites at the
time the mark appears. Hence, these marks act an annihilation
mechanism.

Suppose that the mark at time $\tau_1$ is a dagger labeled
$(a,\xi)$. To determine whether site $1$ is occupied at time $0$ we
need to know the values of $\eta_1(\tau_1-), \dots,
\eta_p(\tau_1-)$. Indeed, if $\Pi_p \eta(\tau_1-)=(a,\xi)$, $\eta_1(0)
= \eta_1(\tau_1)=1-a$, otherwise, $\eta_1(0) = \eta_1(\tau_1) =
\eta_1(\tau_1-)$. Hence, marks labeled $(a,\xi)$ act as branching
events in the revealment process.

It follows from this informal description that to determine the value
at time $0$ of site $1$, we may be forced to find the values of the
occupation variables of a larger subset $\ms A$ of $\Lambda_N$ at a
certain time $t<0$. 

Suppose that we need to determine the values of the occupation
variables of the set $\ms A \subset \Lambda_N$ at time $t<0$. Let
$\tau<t$ be the first [backward in time] mark of one of the Poisson
processes: there is a mark at time $\tau$ and there are no marks in
the time interval $(\tau, t]$.  Suppose that the mark at time $\tau$
is

\begin{itemize}
\item[(a)] an arrow between $i$ and $i+1$;
\item[(b)] a dagger labeled $\pm$ at site $1$;
\item[(c)] a dagger labeled $\pm$ at site $N-1$;
\item[(d)] a dagger labeled $(a,\xi)$ at site $1$.
\end{itemize}

\noindent Then, to determine the values of the occupation variables in
the set $\ms A$ at time $\tau$ (and thus at time $t$), we need to find
the values of the occupation variables in the set

\begin{itemize}
\item[(a)] $\sigma^{i,i+1}\ms A$, defined below in \eqref{eq:sigmaA};
\item[(b)] $\ms A\setminus \{1\}$;
\item[(c)] $\ms A\setminus \{N-1\}$;
\item[(d)] $\ms A \cup \{1, \dots, p\}$ if $1\in \ms A$, and $\ms A$
  otherwise 
\end{itemize}

\noindent at time $\tau-$. Since independent Poisson processes run
backward in time are still independent Poisson processes, this
evolution corresponds to a Markov process taking values in $\Xi_N$,
the set of subsets of $\Lambda_N$, whose generator $\mf L_N$ is given
by
\begin{equation*}
\mf L_N \;=\; \mf L_{l} \;+\; \mf L_{0,N} \;+\; \mf L_{r,N} \;,
\end{equation*}
where
\begin{equation*}
(\mf L_{0,N} f)(\ms A) \; =\; \sum_{i=1}^{N-2} [f(\sigma^{i,i+1} \ms A) -
f(\ms A)] \;;
\end{equation*}
\begin{align*}
(\mf L_{l} f)(\ms A)  & \;=\; (A+B)  \, \bs 1\{1\in \ms A\}\,
\pa{f(\ms A \setminus \{1\}) - f(\ms A)} \\
& \;+\; \sum_{\xi\in\Omega_p} \lambda(\xi) \, \bs 1\{1\in \ms A\}\,
\pa{f(\ms A \cup \{1,\dots,p\}) - f(\ms A)}\;;
\end{align*}
\begin{equation*}
(\mf L_{r,N} f)(\ms A) = f(\ms A \setminus \{N-1\}) - f(\ms A) \;.
\end{equation*}
In these formulae, $\lambda(\xi) = \lambda(0,\xi) + \lambda(1,\xi)$, and
\begin{equation}
\label{eq:sigmaA}
\sigma^{i,i+1}\ms A=\begin{cases}
\ms A \cup\{i+1\}\setminus \{i\} &\mbox{ if }i\in \ms A, i+1\notin \ms A\\
\ms A \cup\{i\}\setminus \{i+1\} & \mbox{ if }i\notin \ms A, i+1\in \ms A\\
\ms A &\mbox{ otherwise }.
\end{cases}
\end{equation}

Denote by $\ms A(s)$ the $\Xi_N $-valued process whose generator is
$\mf L_N$ and which starts from $\{1\}$. If $\ms A(s)$ hits the empty
set at some time $T>0$ due to the annihilations, this means that we
can reconstruct the value of site $1$ at time $0$ only from the
Poisson point processes in the time interval $[-T,0]$, and with no
information on the configuration at time $-T$, $\eta(-T)$.

On the other hand, it should be verisimilar that if the number of
daggers labeled $\pm$ is much larger that the number of daggers
labeled $(a,\xi)$, that is, if the rates $\lambda(a,\xi)$ are much
smaller than $A+B$, the process $\ms A(s)$ should attain the empty
set. The next lemmata show that this is indeed the case. \smallskip

Let
\begin{equation*}
T\;=\; \inf \{s>0 : \ms A(s)=\varnothing \}\;.
\end{equation*}
It is clear that for any $s>0$, the value of $\eta_1(0)$ can be
recovered from the configuration $\eta(-s)$ and from the Poisson marks
in the interval $[-s, 0]$. The next lemma asserts that $\eta_1(0)$ can
be obtained only from the Poisson marks in the interval $[-T, 0]$.

\begin{lemma}
\label{cl01}
Assume that $T<\infty$. The value of $\eta_1(0)$ can be recovered from
the marks in the time interval $[-T, 0]$ of the $N+ 2(P+1)$ Poisson
point processes $\mf N$ introduced in the beginning of this section.
\end{lemma}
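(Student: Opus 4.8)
The plan is to establish, by induction on the marks of the $N+2(P+1)$ Poisson processes $\mf N$ contained in $[-s,0]$ (equivalently, processing them one at a time backward in time), the following invariant, valid for all $s\in[0,T]$: the occupation variable $\eta_1(0)$ is a deterministic function of the finite family $(\eta_k(-s))_{k\in\ms A(s)}$ together with the positions and labels of the marks of $\mf N$ lying in $[-s,0]$. Granting this, the lemma follows at once: on the event $\{T<\infty\}$ the interval $[-T,0]$ is bounded and, the Poisson processes being locally finite, contains only finitely many marks, so the induction reaches $s=T$; moreover $\ms A(T)=\varnothing$ (the infimum defining $T$ is attained because $\ms A$ is a right-continuous pure-jump process, and $\ms A(0)=\{1\}\neq\varnothing$), so the invariant at $s=T$ expresses $\eta_1(0)$ as a function of the marks in $[-T,0]$ alone.

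The base case $s=0$ is trivial since $\ms A(0)=\{1\}$. For the inductive step, let $-\tau$ be a mark time in $[-T,0)$ and denote by $\ms A_{\mathrm{old}}$, resp. $\ms A_{\mathrm{new}}$, the value of $\ms A$ just before, resp. just after, the corresponding jump of the revealment process, so that $\ms A_{\mathrm{new}}$ is obtained from $\ms A_{\mathrm{old}}$ by the $\mf L_N$-transition attached to that mark. Since the whole configuration is frozen between consecutive marks, it suffices to write the family $(\eta_k)_{k\in\ms A_{\mathrm{old}}}$ evaluated just \emph{after} the mark as a deterministic function of $(\eta_k)_{k\in\ms A_{\mathrm{new}}}$ evaluated just \emph{before} it, together with the single mark at $-\tau$; composing with the inductive hypothesis then gives the invariant with one more mark incorporated. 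I would check the four cases of the graphical construction. For an arrow over $(i,i+1)$, the post-mark configuration is $\sigma^{i,i+1}$ of the pre-mark one, so the post-mark family $(\eta_k)_{k\in\ms A_{\mathrm{old}}}$ is a relabeling of the pre-mark family $(\eta_k)_{k\in\sigma^{i,i+1}\ms A_{\mathrm{old}}}=(\eta_k)_{k\in\ms A_{\mathrm{new}}}$. For a dagger labeled $\pm$ at site $1$ or $N-1$, the definitions of $T^1,T^0,S^1,S^0$ force the post-mark value at that site to be $1$ or $0$, hence readable off the label, while the other coordinates are unchanged; accordingly the site is dropped from the set (and if it was not in $\ms A_{\mathrm{old}}$ the mark is simply irrelevant and $\ms A_{\mathrm{new}}=\ms A_{\mathrm{old}}$). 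Finally, for a dagger labeled $(a,\xi)$ at site $1$ with $1\in\ms A_{\mathrm{old}}$, the post-mark configuration is $T^{1-a}\eta$ if $\Pi_p\eta=(a,\xi)$ and $\eta$ otherwise, so the post-mark value of $\eta_1$, hence the whole post-mark family $(\eta_k)_{k\in\ms A_{\mathrm{old}}}$, is a function of the pre-mark values $(\eta_j)_{1\le j\le p}$ and $(\eta_k)_{k\in\ms A_{\mathrm{old}}}$, that is, of the family indexed by $\ms A_{\mathrm{new}}=\ms A_{\mathrm{old}}\cup\{1,\dots,p\}$ (again the mark is irrelevant, $\ms A_{\mathrm{new}}=\ms A_{\mathrm{old}}$, when $1\notin\ms A_{\mathrm{old}}$).

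The proof is essentially a bookkeeping exercise, and the only step that genuinely requires care is the branching case: there the deterministic function acquires a true case split according to whether $\Pi_p\eta=(a,\xi)$ holds, and one must check that the enlarged index set $\ms A_{\mathrm{old}}\cup\{1,\dots,p\}$ does carry enough information to evaluate it. I also emphasize that this lemma uses nothing about the sizes of the marginal rates $\lambda(a,\xi)$; the hypothesis \eqref{fc01} will enter only in the next lemmata, which bound the tail of $T$ and in particular show that $T<\infty$ with the required probability.
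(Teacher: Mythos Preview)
Your proof is correct and captures the same content as the paper's argument: both are inductions over the finitely many marks in $[-T,0]$, checking in each of the four cases (arrow, $\pm$-dagger at $1$, $\pm$-dagger at $N-1$, $(a,\xi)$-dagger) that the revealment set $\ms A(s)$ tracks precisely the occupation variables needed to determine $\eta_1(0)$. The only organizational difference is direction: you run the induction \emph{backward} from $0$ to $-T$, maintaining the invariant that $\eta_1(0)$ is a function of $(\eta_k(-s))_{k\in\ms A(s)}$ and the marks in $[-s,0]$; the paper instead runs \emph{forward} from $-T$ to $0$, constructing an auxiliary $\{0,1,u\}$-valued process $\zeta(s)$ (where $u$ means ``unknown'') and proving that the set $B(s)=\{k:\zeta_k(s)\neq u\}$ of sites whose values are determined by the marks alone always contains $\ms A([-s]-)$. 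Your formulation is arguably more direct, since it simply formalizes the informal description of the revealment process already given in the text; the paper's forward construction has the mild advantage of being explicitly algorithmic---it tells you how to compute $\eta_1(0)$ from the marks, not just that such a computation exists.
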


\begin{proof}
Let $\Xi'_N = \{0,1,u\}^{\Lambda_N}$, where $u$ stands for unknown.
Denote by $\zeta$ the configurations of $\Xi'_N$.  We first construct,
from the marks of the Poisson point processes $\mf N(t)$ on $[-T,0]$,
a $\Xi'_N$-valued evolution $\zeta (s)$ on the time interval $[(-T)-, 0]$ in
which the set $B(s) = \{k\in \Lambda_N : \zeta_k(s) \not = u\}$ represents
the sites whose occupation variables can be determined by the
Poisson point processes only.

Let $\zeta_k([-T]-) = u$ for all $k\in \Lambda_N$. By definition of
the evolution of $\ms A(s)$, $T$ corresponds to a mark of one of the
Poisson point processes $\mf N^{\pm,l}$, $\mf N^{\pm,r}$. We define
$\zeta(-T)$ as follows. If it is a mark from $\mf N^{\pm,l}$ we set
$\zeta_1(-T)=[1\pm 1]/2$ and $\zeta_k(-T)=u$ for $k\not
=1$. Analogously, if it is a mark from $\mf N^{\pm,r}$ we set
$\zeta_{N-1}(-T)=[1\pm 1]/2$ and $\zeta_k(-T)=u$ for $k\not =N-1$.

Denote by $-T = \tau_0 < \tau_1 < \cdots < \tau_M<0<\tau_{M+1}$ the
successive times at which a dagger of type $\pm$ occurs at site $1$ or
$N-1$. If $\tau_j$ corresponds to a mark from $\mf N^{\pm,l}$ we set
$\zeta_1(\tau_j)=[1\pm 1]/2$ and we leave the other values
unchanged. We proceed analogously if $\tau_j$ corresponds to a mark
from $\mf N^{\pm,r}$. There are (almost surely) a finite number of
such times because $T<\infty$ by assumption.

In the intervals $(\tau_j, \tau_{j+1})$, holes, particles and unknowns
exchange their positions according to the marks of $\mf
N_{i,i+1}(t)$. Each time $\sigma$ a dagger of type $\lambda(a,\xi)$ is
found, if $(\zeta_1(\sigma-), \dots, \zeta_p(\sigma-)) = (a, \xi)$, we
update the configuration accordingly. Otherwise, we leave the
configuration unchanged. This completes the description of the
evolution of the process $\zeta(s)$.

We claim that 
\begin{equation}
\label{g20}
B(s) \,\supset\, \ms A([-s]-) \quad\text{for all}\quad
-T \,\le\, s \,\le\, 0\;.
\end{equation}
The left limit $(-s)-$ in $\ms A([-s]-)$ appears because by convention the
processes $\zeta(s)$ and $\ms A(s)$ are both right-continuous and the
latter one is run backwards in time.

We prove this claim by recurrence. By construction, $B([-T]-) = \ms
A(T) = \varnothing$ and $B(-T) = \ms A(T-) = \{1\}$ or $\{N-1\}$,
depending on the mark occurring for $\ms A$ at time $T$. It is clear
that if $B(\tau-) \supset \ms A(-\tau)$, where $\tau \in [-T, 0)$ is
an arrow of type $\mf N_{i,i+1}$ or a mark of type $\mf N^{\pm,l}$,
$\mf N^{\pm,r}$, then $B(\tau) \supset \ms A([-\tau]-)$. Observe that
the inclusion may be strict. For example, if $\tau \in [-T, 0)$ is a
mark of type $\mf N^{+,l}$ and $\ms A([-\tau]-)$ does not contain
$1$. This mark permits to determine the value of site $1$ at time
$\tau$, so that $B(\tau) \ni 1$ but $\ms A([-\tau]-) \not \ni 1$.

Similarly, suppose that $B(\tau-) \supset \ms A(-\tau)$ and that $\tau
\in (-T, 0)$ is a mark of type $\mf N_{(a,\xi)}$. If $1$ belongs to
$\ms A([-\tau]-)$, then $\ms A(-\tau)$ contains $\{1, \dots, p\}$ and
so does $B(\tau-)$ because $B(\tau-) \supset \ms A(-\tau)$. Hence, all
information to update site $1$ is available at time $\tau-$ and $1\in
B(\tau) =B(\tau-)$. Since $\ms A([-\tau]-)$ is contained in $\ms
A(-\tau)$ [it can be strictly contained because some points $m\in \{2,
\dots, p\}$ may not belong to $\ms A([-\tau]-)$], $B(\tau) \supset \ms
A([-\tau]-)$. 

On the other hand, if $1$ does not belong to $\ms A([-\tau]-)$, then
$\ms A([-\tau]-) = \ms A(-\tau)$, while $B(\tau) \supset
B(\tau-)$. [This relation may be strict because it might happen that
$1\not \in B(\tau-)$ and there might be enough information to
determine the value of site $1$ at time $\tau$.] Thus $B(\tau) \supset
B(\tau-) \supset \ms A(-\tau) = \ms A([-\tau]-)$. This proves claim
\eqref{g20}. 

Since $\ms A(0)=\ms A(0-)=\{1\}$, by \eqref{g20}, $B(0)\ni 1$, which
proves the lemma.
\end{proof}

Denote by $\bb Q_N$ the probability measure on $D(\bb R_+, \Xi_N)$
induced by the process $\ms A(s)$ starting from $\{1\}$. Expectation
with respect to $\bb Q_N$ is represented by $\bb Q_N$ as well.

Denote by $C(s)$ the total number of particles created up to time
$s$. The next lemma provides a bound for the total number of particles
created up to the absorbing time $T$.

\begin{lemma}
\label{cl02}
Let $\lambda = \sum_{\xi\in\Omega_p} \{\lambda(0,\xi) +
\lambda(1,\xi)\}$. Then, 
\begin{equation*}
\bb Q_N \, [C(T)] \;\le\; \frac{(p-1) \lambda}
{A+B - (p-1) \lambda}\;\cdot
\end{equation*}
\end{lemma}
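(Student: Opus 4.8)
The plan is to bound the expected number of branching events (daggers labeled $(a,\xi)$ that are "effective", i.e.\ that actually enlarge $\ms A$) by comparing the process $\ms A(s)$ to a supermartingale. Each effective branching event at time $\sigma$ replaces a set $\ms A(\sigma-)$ containing site $1$ by $\ms A(\sigma-) \cup \{1,\dots,p\}$, so it adds at most $p-1$ new sites to $\ms A$; correspondingly it increases $C(\cdot)$ by at most $p-1$ (in fact the created particles are the sites in $\{2,\dots,p\}$ not previously in $\ms A$). The crucial point is that annihilation events — daggers labeled $\pm$ at sites $1$ or $N-1$ — remove site $1$ or site $N-1$ from $\ms A$, hence decrease $|\ms A(s)|$ by one whenever the relevant site is present, while arrows (stirring) never change $|\ms A(s)|$.

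\textbf{Key steps.} First I would track the quantity $|\ms A(s)|$ and compute its infinitesimal drift under the dual generator $\mf L_N$. The stirring part $\mf L_{0,N}$ contributes $0$. The part $(A+B)\,\bs 1\{1\in\ms A\}$ removing $\{1\}$ contributes $-(A+B)\,\bs 1\{1\in\ms A\}$, and $\mf L_{r,N}$ contributes $-\bs 1\{N-1\in\ms A\} \le 0$. The branching part contributes $\sum_{\xi\in\Omega_p}\lambda(\xi)\,\bs 1\{1\in\ms A\}\,\big(|\ms A\cup\{1,\dots,p\}| - |\ms A|\big) \le (p-1)\,\lambda\,\bs 1\{1\in\ms A\}$, where $\lambda = \sum_\xi \lambda(\xi) = \sum_{\xi\in\Omega_p}\{\lambda(0,\xi)+\lambda(1,\xi)\}$. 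Hence, writing $g(s) = |\ms A(s)|$, the process
\begin{equation*}
M(s) \;=\; g(s\wedge T) \;+\; \big(A+B-(p-1)\lambda\big)\int_0^{s\wedge T} \bs 1\{1\in\ms A(r)\}\,dr
\end{equation*}
is a supermartingale (its predictable drift is $\le 0$, the jumps being integrable). Since $A+B-(p-1)\lambda>0$ by hypothesis \eqref{fc01}, and since $g\ge 0$, taking expectations and letting $s\to\infty$ — using that $g(0)=|\{1\}|=1$ and $g(T)=0$ on $\{T<\infty\}$ — gives
\begin{equation*}
\big(A+B-(p-1)\lambda\big)\,\bb Q_N\Big[\int_0^{T} \bs 1\{1\in\ms A(r)\}\,dr\Big] \;\le\; 1\;.
\end{equation*}
Second, I would observe that the rate at which particles are created at time $r$ is at most $(p-1)\sum_\xi \lambda(\xi)\,\bs 1\{1\in\ms A(r)\} = (p-1)\lambda\,\bs 1\{1\in\ms A(r)\}$, since a branching event requires $1\in\ms A(r)$ and adds at most $p-1$ particles. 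Therefore
\begin{equation*}
\bb Q_N[C(T)] \;\le\; (p-1)\lambda\;\bb Q_N\Big[\int_0^T \bs 1\{1\in\ms A(r)\}\,dr\Big] \;\le\; \frac{(p-1)\lambda}{A+B-(p-1)\lambda}\;,
\end{equation*}
which is the claim.

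\textbf{Main obstacle.} The delicate point is the justification of the optional stopping / monotone convergence step: a priori we do not yet know $\bb Q_N[T]<\infty$ (that is presumably the content of a companion lemma), so $M(s)$ must be handled with care on the event $\{T=\infty\}$. The clean way around this is to note that $g(s\wedge T)\ge 0$ always, so from $\E[M(s\wedge T)]\le \E[M(0)] = 1$ we get $\big(A+B-(p-1)\lambda\big)\,\bb Q_N\big[\int_0^{s\wedge T}\bs 1\{1\in\ms A(r)\}\,dr\big]\le 1$ for every $s$, and then monotone convergence in $s$ yields the bound with $s\wedge T$ replaced by $T$, regardless of whether $T$ is finite a.s. A secondary technical point is checking that $g(s)$ has integrable jumps and that the compensator computation is valid up to the stopping time $T$; this is routine since $\ms A(s)\subset\Lambda_N$ is bounded and all jump rates are bounded, so $g$ is a bounded-variation pure-jump process with the stated generator. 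I would phrase the argument directly via Dynkin's formula applied to the bounded functional $|\ms A\cap\Lambda_N|$ stopped at $T\wedge s$, to avoid any subtlety.
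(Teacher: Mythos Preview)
Your proof is correct and rests on the same supermartingale idea as the paper, but the route is slightly different. The paper introduces an auxiliary random walk $X(t)$ on $\bb Z$ that jumps by $-1$ at rate $A+B$ and by $+(p-1)$ at rate $\lambda$, uses the martingale $X(t\wedge T_0) + [A+B-(p-1)\lambda](t\wedge T_0)$ to bound $E[T_0]$, deduces $E[R(T_0)]=\lambda\,E[T_0]$ for the number $R$ of right jumps, and then couples so that $|\ms A(s)|\le X(s)$ and $C(s)\le (p-1)R(s)$. You instead work directly with $|\ms A(s)|$ and the occupation time $\int_0^T \bs 1\{1\in\ms A(r)\}\,dr$, bypassing the auxiliary walk and the coupling altogether. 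Your version is arguably cleaner: the paper's coupling is only sketched, and to make it precise one has to time-change (since $|\ms A|$ evolves only when $1$ or $N-1$ is in $\ms A$, while $X$ has constant rates). Both arguments exploit the same drift inequality $\mf L_N|\ms A| \le -(A+B-(p-1)\lambda)\,\bs 1\{1\in\ms A\}$; you simply apply Dynkin directly to $|\ms A|$ rather than transferring the computation to $X$.
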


\begin{proof}
Let $X(t)$ be a continuous-time random walk on $\bb Z$ which jumps
from $k$ to $k-1$, resp. $k+p-1$, at rate $A+B$,
resp. $\lambda$. Suppose that $X(0)=1$, and let $T_0$ be the first
time the random walk hits the origin. As $X(t\wedge T_0) + [A+B -
(p-1)\, \lambda] \, (t\wedge T_0)$ is an integrable, mean-$1$
martingale,
\begin{equation*}
[A+B - (p-1)\, \lambda] \, E \big[\, t\wedge T_0\, \big] \;=\;
1\;-\; E \big[\, X(t\wedge T_0) \, \big] \;\le\; 1\;.
\end{equation*}
Letting $t\to\infty$ we conclude that $E[T_0]\le 1/(A+B - (p-1)
\lambda)$.

Let $R(s)$ be the total number of jumps to the right of the random
walk $X$ up to time $s$. $R$ is a Poisson process of rate $\lambda$ so
that $R(s) - \lambda\, s$ is a martingale. Hence, $E[R(s\wedge T_0)] =
\lambda\, E[s\wedge T_0]$. Letting $s\to\infty$, we obtain that
\begin{equation*}
E[R(T_0)] \;=\; \lambda\, E[T_0] \;\le\;
\frac{\lambda} {A+B - (p-1) \lambda}\;\cdot
\end{equation*}

Consider the process $\ms A(s)$ associated to the generator $\mf L_N$.
Denote the cardinality of a set $B\in \Xi_N$ by $|B|$. $|\ms A(s)|$
only changes when the set $\ms A(s)$ contains $1$ or $N-1$. The
Poisson daggers at $N-1$ may only decrease the cardinality of the set.
When $\ms A(s)$ contains $1$, Poisson daggers of type $\pm$ appear at
site $1$ at rate $A+B$ and they decrease the cardinality of $\ms A(s)$
by $1$. Analogously, the other daggers appear at site $1$ at rate
$\lambda$ and increase the cardinality by at most $p-1$. This shows
that we may couple $|\ms A(s)|$ with the random walk $X(s)$ in such a
way that $|\ms A(s)|\le X(s)$ and that $C(s) \le (p-1) R(s)$ for all
$0\le s\le T_0$. The assertion of the lemma follows from the bound
obtained in the first part of the proof.
\end{proof}

As the total number of particles created in the process $\ms A(s)$ has
finite expectation, and since these particles are killed at rate $A+B$
when they reach site $1$, the life-span $T_0$ of $\ms A(s)$ can not be
large and the set of sites ever visited by a particle in $\ms A(s)$
can not be large. This is the content of the next two lemmata.

\begin{lemma}
\label{cl03}
For any sequence $\ell_N \to \infty$,
\begin{equation*}
\lim_{N\to \infty}  \bb Q_N \big[\, T > N\, \ell_N\,\big ] \;=\;
0\; .
\end{equation*}
\end{lemma}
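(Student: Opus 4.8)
The plan is to establish the stronger bound $\bb Q_N[T] \le C_0\, N$ for a finite constant $C_0$ independent of $N$; Markov's inequality then yields $\bb Q_N[T > N\ell_N] \le \bb Q_N[T]/(N\ell_N) \le C_0/\ell_N$, which vanishes as $N\to\infty$. Since $|\ms A(s)| \ge 1$ for $s<T$, we have $T \le \int_0^T |\ms A(s)|\, ds$. Enumerate as $1,2,\dots$ the particles ever present in the dual process, in order of creation, the first one being the particle located at site $1$ at time $0$; let $L_i$ be the total amount of time the $i$-th particle is present, so that $\int_0^T |\ms A(s)|\, ds = \sum_i L_i$. By Lemma \ref{cl02} the number $1+C(T)$ of particles ever created satisfies $\bb Q_N[1+C(T)] \le c_0 := (A+B)/[A+B-(p-1)\lambda]$, which is finite by \eqref{fc01}. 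It thus suffices to bound $\bb Q_N\big[\sum_i L_i\big]$.

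The heart of the matter is the estimate $\bb Q_N[L_i \mid \mc G_i] \le c_1 N$, where $\mc G_i$ is the history of the graphical construction up to the birth time of the $i$-th particle and $c_1$ depends only on $A$, $B$, $\lambda$ and $p$. In the graphical construction of $\ms A(s)$, each labelled particle follows the nearest-neighbour arrows regardless of the occupation of the other sites, and is therefore a continuous-time symmetric random walk on $\Lambda_N$ which jumps at rate $1$ from site $1$ to site $2$ and from site $N-1$ to site $N-2$, and which is killed at rate $A+B$ while at site $1$ and at rate $1$ while at site $N-1$; the branching daggers at site $1$ leave it in place. Consequently, conditionally on $\mc G_i$, the lifetime $L_i$ has the law of the killing time of such a walk started from its birth site, which always lies in $\{1,\dots,p\}$, so $\bb Q_N[L_i \mid \mc G_i]$ equals $v(x_i) \le \max_{1\le k\le p} v(k)$, where $v(k)$ denotes the expected killing time starting from site $k$. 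To see that $v(k) = O(N)$ for $k\le p$, ignore the killing at site $N-1$ (which only shortens the life-span), so that site $N-1$ becomes reflecting, and decompose the trajectory into successive excursions away from site $1$: each visit to site $1$ ends in death before the particle leaves site $1$ with probability $(A+B)/(A+B+1)>0$, so the number of such excursions is a geometric random variable with $O(1)$ mean, while a direct computation shows that a single excursion — the time for the reflected walk to return to site $1$ after stepping to site $2$ — has expected length $N-2$. This gives $v(1) = O(N)$, and then $v(k) \le (\text{expected hitting time of site }1\text{ from }k) + v(1) = O(pN)$ for $1\le k\le p$ by the strong Markov property and an elementary estimate. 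Hence $v(k) \le c_1 N$ for all $1\le k\le p$.

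To conclude, note that the event that the $i$-th particle is ever created is $\mc G_i$-measurable, whence by Tonelli and Lemma \ref{cl02},
\begin{equation*}
\bb Q_N\Big[\, \sum_i L_i \,\Big] \;=\; \sum_i \bb Q_N\big[\, \bs 1\{i\text{ is created}\}\; \bb Q_N[L_i \mid \mc G_i]\, \big] \;\le\; c_1 N \sum_i \bb Q_N\big[\, \bs 1\{i\text{ is created}\}\, \big] \;=\; c_1 N\, \bb Q_N[1+C(T)] \;\le\; c_0 c_1\, N\;.
\end{equation*}
Therefore $\bb Q_N[T] \le \bb Q_N\big[\int_0^T |\ms A(s)|\, ds\big] \le c_0 c_1 N$, and the lemma follows. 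The delicate step is the per-particle estimate $v(k)=O(N)$ for $k\le p$: a naive Dynkin/Lyapunov argument only produces $v(k)=O(N^2)$ in the bulk, hence $\bb Q_N[T]=O(N^2)$, which is too weak; one must use that every particle is born within distance $p$ of site $1$ together with the fact that a single excursion away from site $1$, although it may reach depth of order $N$, returns in expected time of order $N$ rather than $N^2$. Some care is also needed to justify that each labelled particle performs a random walk notwithstanding the exclusion interaction and the branching, and that the interchange of summation and expectation above is legitimate.
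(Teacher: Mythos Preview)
Your proof is correct and follows the same route as the paper: decompose $T$ into the individual particle lifespans (the paper's $T_k$, your $L_i$), bound each by $O(N)$ via the excursion decomposition at site $1$, and invoke Lemma \ref{cl02} to control the number of particles. The only difference is in the packaging: the paper first truncates on the event $\{C(T)\le \sqrt{\ell_N}\}$ and then applies Markov's inequality to the finite sum $\sum_{k\le \sqrt{\ell_N}} T_k$, whereas you condition on the birth filtration $\mc G_i$ to obtain the Wald-type estimate $\bb Q_N[T]\le c_1 N\,\bb Q_N[1+C(T)]\le C_0 N$ directly --- a slightly stronger conclusion than the lemma actually requires.
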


\begin{proof}
Fix a sequence $\ell_N\to\infty$, let $m_N=\sqrt{\ell_N}$, and write
\begin{equation*}
\bb Q_N \big[\, T > N\, \ell_N\,\big ] \;\le \;
\bb Q_N \big[\, T > N\, \ell_N\, ,\, C(T) \leq m_N \, \big ]
\;+\; \bb Q_N \big[\, C(T) >  m_N \, \big ] \;.
\end{equation*}
By the Markov inequality and Lemma \ref{cl02}, the second term vanishes as
$N\to\infty$. 

Denote by $T_1$ the lifespan of the particle initially at $1$, and by
$T_k$, $2\leq k\leq C(T)$, the lifespan of the $k$-th particle created
in the process $\ms A(s)$. By lifespan, we mean the difference
$\tau_k-\sigma_k$, where $\sigma_k$, resp. $\tau_k$, represents the
time the $k$-th particle has been created, resp. annihilated. Clearly,
\begin{equation*}
T\;\le\; \sum_{k=1}^{C(T)} T_k\;.
\end{equation*}
Set $T_k=0$ for $k>C(T)$. The first term on the right-hand side of the
penultimate formula is bounded above by
\begin{equation*}
\bb Q_N \Big[\, \sum_{k=1}^{m_N} T_k > N\, \ell_N \, \Big ]
\;\leq\;  \frac{m_N}{N\, \ell_N} \sup_{k\geq 1} \bb Q_N [\, T_k\,] \; .  
\end{equation*}
It remains to show that there exists a finite constant $C_0$ such that
for all $k\ge 1$,
\begin{equation}
\label{2-14}
\bb Q_N [\, T_k\,] \;\leq\;  C_0\, N\; .
\end{equation}

Particles are created at one of the first $p$ sites. After being
created, they perform a symmetric random walk at rate $1$ on
$\Lambda_N$. Each time a particle hits site $1$, resp. $N-1$, it is
destroyed at rate $A+B$, resp. $1$. We overestimate the lifespan by
ignoring the annihilation at the right boundary.
	
Consider a particle performing a rate $1$ random walk on $\Lambda_N$
with reflection at the boundary $N-1$ and annihilated at rate $A+B$ at
site $1$.  Denote by $\bs P_k$ the distribution of this random walk
started from site $k$, and by $\bs E_k$ the corresponding
expectation. Let $T_Y$ be the time this particle is killed at site
$1$, and $Y_t$, $t\le T$ its position at time $t$. By the strong
Markov property, $\bs E_k[T_Y]$ increases with $k$. Hence,
\begin{equation*}
\bb Q_N [\, T_k\,] \;\leq\; \bs E_p[\,T_Y\,]\;.
\end{equation*}

Divide the lifespan $T_Y$ in excursions away from $1$. To keep
notation simple, assume that the random walk $Y$ keeps evolving after
being killed. Denote by $\{t_j : j\ge 1\}$ the successive hitting
times of site $1$: $t_0=0$, and for $i\ge 1$,
\begin{equation*}
t_{i}\;=\; \inf \big\{t>t_{i-1} :  Y(t)=1 \text{ and } Y(t-)\neq 1
\big\}\; .
\end{equation*}
Denote by $u_i$, $i\ge 1$, the time the random walk $Y(t)$ leaves site
$1$ after $t_i$:
\begin{equation*}
u_{i}\;=\; \inf \big\{t>t_{i} :  Y(t)\not =1 \big\}\; ,
\end{equation*}
and set $u_0=0$.  Let $\sigma_i = u_{i} - t_i$,
resp. $s_i=t_i-u_{i-1}$, be duration of the $i$-th sojourn at $1$,
resp. the duration of the $i$-th excursion away from $1$.

Denote by $A_k$ the event ``the particle is annihilated during its
$k$-th sojourn at site $1$''.  With this notation we have that
\begin{equation*}
T_Y  \; \le \; (s_1 + \sigma_1) \;+\;  \sum_{i\ge 2}   
\bs 1\{ A^c_1 \cap \cdots \cap A^c_{i-1}\} \, (s_i \;+\; \sigma_i) \;.
\end{equation*}
By the strong Markov property at time $u_{i-1}$,
\begin{equation*}
\bs E_p \Big[\, \bs 1\{ A^c_1 \cap \cdots \cap A^c_{i-1}\} \, (s_i \;+\;
\sigma_i) \,\Big]\;=\; \bs P_p \Big[\, A^c_1 \cap \cdots \cap A^c_{i-1} \,\Big]
\, \bs E_2 \big[\, s_1 + \sigma_1 \,\big]\;.
\end{equation*}
Since the particle is annihilated at rate $A+B$ and leaves site $1$ at
rate $1$, each time it hits site $1$ it is killed during its sojourn
at $1$ with probability $(A+B)/(A+B+1)$. Thus, by the strong Markov
property, the probability on the right hand side of the previous
displayed equation is equal to $\alpha^{i-1}$, where $\alpha =
1/(A+B+1)$, so that
\begin{equation*}
\bs E_p \big[\, T_Y \,\big]
\; \le \; \bs E_p \big[\,  s_1 + \sigma_1 \,\big]
\;+\;  \frac 1{A+B}\, \bs E_2 \big[\,  s_1 + \sigma_1 \,\big]\;.
\end{equation*}

On the one hand, for any $k\in \Lambda_N$, $\bs E_k [\, \sigma_1 \,]=
1$, On the other hand, $\bs E_2 [\, s_1 \,] \le \bs E_p [\, s_1
\,]$. Since the random walk is reflected at $N-1$, by solving the
elliptic difference equation satisfied by $f(k) = \bs E_k [\, s_1
\,]$, we obtain that $\bs E_p [\, s_1 \,] \le C_0N$ for some finite
constant $C_0$ independent of $N$. This completes the proof
\eqref{2-14} and the one of the lemma.
\end{proof}

The proof of the previous lemma shows that each new particle performs
only a finite number of excursions, where by excursion we mean the
trajectory between the time the particle leaves site $1$ and the time
it returns to $1$. In each excursion the particle visits only a finite
number of sites. This arguments yields that during its lifespan the
process $\ms A(s)$ does not visit many sites. This is the content of
the next result.

\begin{lemma}
\label{2-l2}
For any sequence $\ell_N$ such that $\ell_N \to \infty$, $\ell_N\le N -1$,   
\begin{equation*}
\lim_{N\to \infty}
\bb Q_N \big[ \ms A(s) \ni \ell_N \text{ for some } s\ge 0 \, \big]
\;=\; 0\;.
\end{equation*}
\end{lemma}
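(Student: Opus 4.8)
The plan is to exploit the stirring/graphical representation of the dual process to write $\ms A(s)$ as the set of positions of a (random) family of labelled particles, each of which, marginally, performs a rate-one simple random walk on $\Lambda_N$ absorbed at rate $A+B$ at site~$1$; I will then bound by $C_0/\ell_N$ the probability that any \emph{single} such particle ever reaches site $\ell_N$. Since Lemma~\ref{cl02} bounds the expected number of particles ever created uniformly in $N$, a union bound produces a bound of order $1/\ell_N$ for the probability in the statement, and this tends to $0$ because $\ell_N\to\infty$.

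First I would fix the particle decomposition. Recall that the marks driving $\ms A(s)$ are stirring arrows on the edges $(i,i+1)$, annihilation daggers $\pm$ at sites $1$ and $N-1$, and branching daggers $(a,\xi)$ at site $1$, the last of which replace $\ms A$ by $\ms A\cup\{1,\dots,p\}$ whenever $1\in\ms A$. Using the usual stirring coupling, label the elements of $\ms A(s)$: the initial particle sits at site $1$; each branching mark (occurring at rate $\lambda=\sum_\xi\{\lambda(0,\xi)+\lambda(1,\xi)\}$ when $1\in\ms A(s)$) creates at most $p-1$ new particles, all at sites of $\{1,\dots,p\}$; between its creation and its death each labelled particle moves along the stirring arrows, so marginally it is a rate-one symmetric random walk on $\Lambda_N$; a particle at site $1$ is killed at rate $A+B$ by the $\pm$-daggers and a particle at $N-1$ at rate $1$, while branching marks neither move nor kill an already present particle. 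Thus $\ms A(s)$ is exactly the set of positions of the alive labelled particles, and the event in the statement equals $\bigcup_i V_i$, where $V_i$ is the event that particle $i$ ever visits site $\ell_N$. Writing $\mathcal N=1+C(T)$ for the total number of particles created,
\begin{equation*}
\bb Q_N\big[\ms A(s)\ni\ell_N\text{ for some }s\ge0\big]
\;\le\;\bb Q_N\Big[\sum_{i=1}^{\mathcal N}\bs 1\{V_i\}\Big]
\;=\;\sum_{i\ge1}\bb Q_N\big[\{i\le\mathcal N\}\cap V_i\big]\;.
\end{equation*}

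The core step is the single-particle estimate. By the strong Markov property of the graphical construction at the creation time of particle $i$, conditionally on $\{i\le\mathcal N\}$ and on the creation site $j_i\in\{1,\dots,p\}$, the future of particle $i$ is a rate-one symmetric random walk started at $j_i$ and killed at rate $A+B$ at site~$1$ (I discard the killing at $N-1$, which can only lower the chance of reaching $\ell_N$). Split this trajectory into successive excursions away from site~$1$: whenever the walk sits at $1$ it escapes to site~$2$ with probability $\alpha:=1/(A+B+1)$ before being killed, so the number of excursions is dominated by a geometric law, $\bb Q_N[\#\text{excursions}\ge k]\le\alpha^{k-1}$, and these survival events depend only on the killing clock at site~$1$, hence are independent of the spatial motion during the excursions. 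Each excursion starts from a site of $\{1,\dots,p\}$ (the first from $j_i$, the later ones from site~$2$), and by the gambler's ruin identity for the symmetric walk on $\{1,\dots,\ell_N\}$ it reaches $\ell_N$ before returning to $1$ with probability at most $(p-1)/(\ell_N-1)$ — here only $\ell_N\le N-1$ is used, so the right boundary is irrelevant. Summing over excursions,
\begin{equation*}
\bb Q_N[V_i\mid i\le\mathcal N,\ j_i]
\;\le\;\frac{p-1}{\ell_N-1}\sum_{k\ge1}\alpha^{k-1}
\;=\;\frac{(p-1)(A+B+1)}{(A+B)(\ell_N-1)}\;=:\;\frac{C_0}{\ell_N-1}\;,
\end{equation*}
a bound uniform in $i$, in $j_i$ and in $N$.

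Assembling the pieces, the union bound and the single-particle estimate give
\begin{equation*}
\bb Q_N\big[\ms A(s)\ni\ell_N\text{ for some }s\ge0\big]
\;\le\;\frac{C_0}{\ell_N-1}\sum_{i\ge1}\bb Q_N[i\le\mathcal N]
\;=\;\frac{C_0}{\ell_N-1}\,\bb Q_N[\mathcal N]
\;=\;\frac{C_0}{\ell_N-1}\big(1+\bb Q_N[C(T)]\big)\;,
\end{equation*}
and by Lemma~\ref{cl02} and assumption \eqref{fc01} the factor $1+\bb Q_N[C(T)]$ is bounded by $1+(p-1)\lambda/(A+B-(p-1)\lambda)$, a constant independent of $N$. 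Since $\ell_N\to\infty$ the right-hand side vanishes, which proves the lemma. I expect the only genuinely delicate point to be the justification that, in the stirring representation, each labelled particle moves \emph{marginally} as an autonomous random walk with boundary killing whose law past its creation time is independent of the history — this is the standard stirring coupling together with the strong Markov property for the graphical dynamics; once it is in place, the gambler's ruin and geometric-excursion bounds are routine and, crucially, produce an estimate of order $1/\ell_N$ with no concealed dependence on $N$, in contrast with the \emph{temporal} bound of Lemma~\ref{cl03}, which does grow with $N$.
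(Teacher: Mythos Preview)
Your proof is correct and follows essentially the same route as the paper's: decompose $\ms A(s)$ into labelled particles, show via the excursion/gambler's-ruin argument that each one reaches $\ell_N$ with probability at most $C_0/\ell_N$, and control the total number of particles through Lemma~\ref{cl02}. The only cosmetic difference is that the paper truncates the particle count at $m_N=\sqrt{\ell_N}$ and handles the tail $\{C(T)>m_N\}$ by Markov's inequality (yielding a final bound of order $1/\sqrt{\ell_N}$), whereas you sum directly over all particles weighted by $\bb Q_N[i\le\mathcal N]$, which is slightly cleaner and gives the sharper $O(1/\ell_N)$.
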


\begin{proof}
Fix a sequence $\ell_N$ satisfying the assumptions of the lemma.
Denote by $X_k(s)$ the position at time $s$ of the $k$-th particle
created. Before its creation and after its annihilation we set the
position of the particle to be $0$.  The probability appearing in the
statement of the lemma can be rewritten as
\begin{equation*}
\bb Q_N \Big[ \bigcup_{l=1}^{C(T)} \big\{ X_l(s)  = \ell_N
	\text{ for some } s\ge 0 \big\} \Big]\; .
\end{equation*}
Let $m_N=\sqrt{\ell_N}$. The previous expression is bounded by
\begin{equation*}	
\bb Q_N \Big[ \bigcup_{l=1}^{C(T)} \big\{ X_l(s)  =  \ell_N
\text{ for some } s\ge 0 \big\} \,,\,  C(T) \le m_N \Big]
\;+\;  \frac 1{m_N} \, \bb Q_N [C(T)] \;.
\end{equation*}
By Lemma \ref{cl02}, the second term vanishes as $N\to\infty$. Set
$X_l(s)=0$ for any $l>C(T)$, $s\geq 0$. With this notation, we
can replace $C(T)$ by $m_N$ in the union, to bound the first term in
the previous equation by
\begin{equation*}
\sum_{l=1}^{m_N} \bb Q_N \big[\, X_l(s)  = \ell_N 
\text{ for some } s\ge 0 \,\big]\; .
\end{equation*}
It remains to show that there exists a finite constant $C_0$ such that for
all $l\ge 1$,
\begin{equation}
\label{2-11}
\bb Q_N \big[\, X_l(s)  =  \ell_N 
\text{ for some } s\ge 0 \,\big]
\;\le\; \frac {C_0}{\ell_N}\;\cdot
\end{equation}

To derive \eqref{2-11}, recall the notation introduced in the proof of
the previous lemma. Clearly, for any $l\ge 1$,
\begin{equation*}
\bb Q_N \big[\, X_l(s)  =  \ell_N \text{ for some } s\ge 0 \,\big]
\;\le\; \bs P_p \big[\, Y(s)=\ell_N\mbox{ for some }s\leq T_Y
\,\big]\;.
\end{equation*}
Note that this is not an identity because the $l$-th particle may have
been created at a site $k<p$.

Denote by $U_k$ the event that the particle $Y$ visits the site
$\ell_N$ in the time interval $[u_{k-1}, t_k]$.  Hence,
\begin{equation*}
\big\{ Y^j(s)  = \ell_N \text{ for some } s\ge 0 \, \big\}
\;\subset \; U_1 \cup \bigcup_{i\ge 2} \Big(  A^c_1 \cap \cdots \cap
A^c_{i-1} \cap U_i \Big) \;.
\end{equation*}
By the strong Markov property applied at time $u_{i-1}$,
\begin{equation*}
\bs P_p \big[\, Y^j(s)  = \ell_N \text{ for some } s\ge 0 \, \big]
\; \le \; \bs P_p \big[\, U_1 \, \big]
\;+\; \sum_{i\ge 2}   \bs P_p \big[\, A^c_1 \cap \cdots \cap
A^c_{i-1} \, \big] \, \bs P_2 \big[\, U_1 \, \big] \;.
\end{equation*}
If $Y(0)=k$, the event $U_1$ corresponds to the event that a symmetric
random walk starting from $k$ hits $\ell_N$ before it attains $1$, so
that $\bs P_{k} [U_1] = [k-1]/[\ell_N -1]$. Since the particle is
annihilated with probability $(A+B)/(1+A+B)$ in each of its sojourn at
site $1$, by the strong Markov property, the previous sum is equal to
\begin{equation*}
\frac {p-1}{\ell_N-1} \;+\; \frac {1}{A+B}\, \frac {1}{\ell_N-1}\;\cdot
\end{equation*}
This proves assertion \eqref{2-11}.
\end{proof}

We have now all elements to show that the sequence $\rho_N(1)$
converges. 

\begin{proposition}
\label{cp01}
Suppose that conditions \eqref{fc01} are in force.  The limit
\begin{equation*}
\alpha:=\lim_{N\to \infty} \rho_N(1)
\end{equation*}
exists, and it does not depend on the boundary conditions at $N-1$.
\end{proposition}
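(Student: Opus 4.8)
The plan is to use the graphical construction and the revealment (dual) process $\ms A(s)$ to express $\rho_N(1)$ as the probability of an event depending only on the Poisson marks contained in a space--time region lying far from the right boundary, and then to couple systems of different sizes and with different boundary conditions at the right end.

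First I would record that, for each fixed $N$, the revealment process $\ms A(s)$ is a continuous-time Markov chain on the finite family of subsets of $\Lambda_N$ for which $\varnothing$ is the unique absorbing state and is accessible from every state (a particle at site $1$ is removed at rate $A+B>0$ by \eqref{fc01}); hence $T<\infty$ $\bb Q_N$-almost surely. Together with Lemma \ref{cl01}, and with the fact that, when the graphical construction is started from an arbitrary configuration at a time $t_0<-T$, the value $\eta_1(0)$ depends neither on $t_0$ nor on that configuration, this identifies $\eta_1(0)$ with a fixed functional $\widehat\eta_1$ of the marks of the Poisson processes $\mf N$ in the interval $[-T,0]$. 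Letting $t_0\to-\infty$ and invoking ergodicity yields $\rho_N(1)=E_{\mu_N}[\eta_1]=\bb Q_N[\widehat\eta_1=1]$. Moreover, it follows from the description of the revealment that the only marks consulted in the evaluation of $\widehat\eta_1$ are those lying within the space--time tube swept by $\ms A(s)$, $s\in[0,T]$: stirring marks on edges incident to $\ms A(s)$, daggers at site $1$ while $1\in\ms A(s)$, branching daggers while $1\in\ms A(s)$, and daggers at $N-1$ while $N-1\in\ms A(s)$.

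Next, take $\ell_N=N-1$ and set $\mc G_N=\{\ms A(s)\not\ni N-1\text{ for every }s\ge 0\}$, so that $\bb Q_N[\mc G_N^c]\to 0$ by Lemma \ref{2-l2}. On $\mc G_N$ the set $\ms A(s)$ stays inside $\{1,\dots,N-2\}$ for its entire (finite) lifetime and no dagger at $N-1$ is ever consulted; hence on $\mc G_N$ the functional $\widehat\eta_1$ is measurable with respect to the marks of $\mf N_{i,i+1}$ ($1\le i\le N-2$), $\mf N^{\pm,l}$ and $\mf N_{(a,\xi)}$ in $[-T,0]$ alone --- none of which involve the right boundary. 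For $N<M$ I would couple the two systems by using one and the same realization of these ``left-and-bulk'' Poisson processes, drawing the right-boundary processes $\mf N^{\pm,r}$ at sites $N-1$ and $M-1$ independently. On $\mc G_N$ the revealment processes of the two systems both start from $\{1\}$, are driven only by the shared marks, never reach $N-1$ (hence never reach $M-1$), therefore coincide for all $s\le T$ and are absorbed at the same time; in particular $\widehat\eta_1^{(N)}=\widehat\eta_1^{(M)}$ on $\mc G_N$. Consequently $|\rho_N(1)-\rho_M(1)|\le \bb Q_N[\mc G_N^c]$, uniformly in $M>N$, so that $(\rho_N(1))_{N\ge 1}$ is Cauchy and converges to a limit $\alpha$.

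Finally, to see that $\alpha$ does not depend on the boundary dynamics at $N-1$, I would repeat the coupling with $N$ fixed and the right-boundary dynamics replaced by an arbitrary admissible one: in the revealment process such a dynamics only erases the site $N-1$, so on $\mc G_N$ the value $\widehat\eta_1$ is unchanged, whence $|\rho_N(1)-\widetilde\rho_N(1)|\le\bb Q_N[\mc G_N^c]\to 0$ and $\widetilde\rho_N(1)\to\alpha$ as well. The delicate point, in my view, is the claim made on $\mc G_N$ that $\widehat\eta_1$ truly ignores the right boundary and the sites outside $\{1,\dots,N-2\}$; establishing it requires going back to the reconstruction in the proof of Lemma \ref{cl01} and checking that, because the set $\ms A(s)$ --- which records precisely which occupation variables have to be queried and at which sites --- remains in $\{1,\dots,N-2\}$ and is absorbed by time $T$, only the marks lying in that space--time window are ever read. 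Once this is granted, the Cauchy estimate and the change of right boundary condition are immediate from Lemma \ref{2-l2}.
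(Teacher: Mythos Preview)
Your proposal is correct and follows essentially the same approach as the paper: both couple the systems on $\Lambda_N$ and $\Lambda_M$ via shared Poisson marks for the bulk and the left boundary, observe that the dual processes $\ms A_N$ and $\ms A_M$ coincide as long as neither reaches site $N-1$, and invoke Lemma~\ref{2-l2} (with $\ell_N=N-1$) to conclude that $(\rho_N(1))$ is Cauchy and that the limit is insensitive to the right boundary mechanism. Your write-up is slightly more explicit than the paper's in justifying $T<\infty$ almost surely and in spelling out the measurability of $\widehat\eta_1$ with respect to the marks in the tube swept by $\ms A$, but the argument is the same.
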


\begin{proof}
The proof of this proposition is based on coupling a system evolving
on $\Lambda_N$ with a system evolving on $\Lambda_M$, $1< N<M$ by
using the same Poisson point processes to construct both evolutions.

Let $\{\mf N^{\pm, r, b}(t) : t\in \bb R\}$, $b=1$, $2$, be
independent Poisson point processes, where $\mf N^{+, r, b}$ has rate
$\beta$ and $\mf N^{-, r, b}$ rate $1-\beta$. Use the Poisson point
processes $\mf N_{i,i+1}(t)$, $1\le i< N-1$, $\mf N^{\pm, l}(t)$, $\mf
N_{(a,\xi)} (t)$, $\mf N^{\pm, r, 1}(t)$, $t\in\bb R$, to construct
trajectories of a Markov chain $\eta^N(t)$ whose generator is $L_N$
introduced in \eqref{f13}. Similarly, use the Poisson point processes
$\mf N_{i,i+1}(t)$, $1\le i< M-1$, $\mf N^{\pm, l}(t)$, $\mf N_{(a,\xi)}
(t)$, $\mf N^{\pm, r, 2}(t)$ to construct trajectories of a Markov
chain $\eta^M(t)$ whose generator is $L_M$. Note that on the left
boundary and on $\Lambda_N$ the same Poisson processes are used to
construct both chains.

Denote by $\ms A_N(t)$, $\ms A_M(t)$, $t\ge 0$, the dual processes
evolving according to the Poisson marks described at the beginning of
subsection \ref{sec:devices} with initial condition $\ms A_N(0) = \ms
A_M(0) =\{1\}$. By construction, $\ms A_N(t) = \ms A_M(t)$ for all
$t\ge 0$ if $N-1 \not \in \ms A_N(t)$ for all $t\ge 0$. Hence, since
the value of $\eta^N(0)$ can be recovered from the trajectory $\{\ms
A_N(t) : t\ge 0\}$,
\begin{equation}
\label{f14}
\{\eta^N(0) \not = \eta^M(0)\} \;\subset 
\{\ms A_N(t) \ni N-1 \text{ for some } t\ge 0\} \; .
\end{equation}

Denote by $\widehat{\bb P}_{N,M}$ the probability measure associated
to the Poisson processes $\mf N_{i,i+1}(t)$, $1\le i< M-1$,
$\mf N^{\pm, l}(t)$, $\mf N_{(a,\xi)} (t)$, $\mf N^{\pm, r, a}(t)$.
Expectation with respect to $\widehat{\bb P}_{N,M}$ is represented by
$\widehat{\bb E}_{N,M}$. With this notation,
$\rho_N(1) = E_{\mu_N}[\eta_1] = \widehat{\bb E}_{N,M}
[\eta^N_1(0)]$. Hence,
\begin{equation*}
\big|\, \rho_N(1) - \rho_M(1)\,\big| \;\le\;
\widehat{\bb E}_{N,M} \big[\, \big|\, \eta^N_1(0) - 
\eta^M_1(0) \,\big| \, \big] \;.
\end{equation*}
By \eqref{f14}, this expression is less than or equal to
\begin{equation*}
\widehat{\bb P}_{N,M} \big[\, 
\ms A_N(t) \ni N-1 \text{ for some } t\ge 0 \, \big]
\;=\; \bb Q_N \big[\, 
\ms A(t) \ni N-1 \text{ for some } t\ge 0 \, \big]\;.
\end{equation*}
By Lemma \ref{2-l2} the right-hand side vanishes as $N\to\infty$. This
shows that the sequence $\rho_N(1)$ is Cauchy and therefore converges.

Since the argument relies on the fact that the dual process $\ms
A_N(t)$ reaches $N-1$ with a vanishing probability, the same proof
works if the process $\eta^M(t)$ is defined with any other dynamics at
the right boundary, e.g., reflecting boundary condition.
\end{proof}

In the next result we derive an explicit expression for the density
$\rho_N(k)$ in terms of $\beta$ and $\rho_N(1)$.

\begin{lemma}
\label{2-l5}
For all $k\in\Lambda_N$,
\begin{equation*}
\rho_N(k) \;=\;  \frac{N-k}{N-1} \, \rho_N(1) \;+\; \frac{k-1}{N-1} \,
\beta\;. 
\end{equation*}
\end{lemma}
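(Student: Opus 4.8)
The plan is to proceed exactly as in the first part of the proof of Lemma \ref{ll03}: exploit stationarity through the identities $E_{\mu_N}[L_N \eta_k] = 0$, $k\in\Lambda_N$, and show that for the interior sites these identities reduce to a discrete Laplace equation whose solution is forced to be the affine interpolation between $\rho_N(1)$ and $\beta$.

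First I would note that the case $k=1$ is trivial, since there the claimed identity reads $\rho_N(1)=\rho_N(1)$. Then, for $2\le k\le N-2$, I would compute $L_N\eta_k$. The left boundary generator $L_l$ flips only $\eta_1$, so $L_l\eta_k=0$; the right boundary generator $L_{r,N}$ flips only $\eta_{N-1}$, so $L_{r,N}\eta_k=0$; hence only the stirring dynamics $L_{b,N}$ of \eqref{02} contributes, and a one-line computation gives $L_{b,N}\eta_k=\eta_{k-1}+\eta_{k+1}-2\eta_k$. Taking $\mu_N$-expectations yields $\rho_N(k-1)+\rho_N(k+1)-2\rho_N(k)=0$ for $2\le k\le N-2$. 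For $k=N-1$ I would additionally evaluate the reservoir term: from \eqref{03}, $L_{r,N}\eta_{N-1}=\{\beta(1-\eta_{N-1})+(1-\beta)\eta_{N-1}\}(1-2\eta_{N-1})=\beta-\eta_{N-1}$ (the second equality because $\eta_{N-1}\in\{0,1\}$), so that $E_{\mu_N}[L_{r,N}\eta_{N-1}]=\beta-\rho_N(N-1)$. Combined with the stirring contribution $\eta_{N-2}-\eta_{N-1}$, the identity $E_{\mu_N}[L_N\eta_{N-1}]=0$ becomes $\rho_N(N-2)-2\rho_N(N-1)+\beta=0$; setting $\rho_N(N):=\beta$, this is the same discrete Laplace equation as above, now valid for all $2\le k\le N-1$.

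Finally I would invoke uniqueness for the discrete Dirichlet problem. The function $k\mapsto\rho_N(k)$ on $\{1,\dots,N\}$ satisfies $(\Delta_N\rho_N)(k)=0$ for $2\le k\le N-1$ with boundary values $\rho_N(1)$ at $k=1$ and $\rho_N(N)=\beta$, and the only such function is the affine interpolation $\rho_N(k)=\rho_N(1)+\tfrac{k-1}{N-1}\,(\beta-\rho_N(1))$, which rearranges to $\rho_N(k)=\tfrac{N-k}{N-1}\,\rho_N(1)+\tfrac{k-1}{N-1}\,\beta$, as asserted. I do not anticipate any genuine obstacle: the whole argument is the bookkeeping of which generator acts on which site plus an elementary linear recursion, and it is essentially identical to the derivation of \eqref{f07} in Lemma \ref{ll03}, the only model-dependent input being that here $L_l$ acts solely on site $1$.
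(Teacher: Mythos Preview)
Your proposal is correct and follows essentially the same approach as the paper's proof: use $E_{\mu_N}[L_N\eta_k]=0$ to obtain $(\Delta_N\rho_N)(k)=0$ for $2\le k\le N-1$ with the convention $\rho_N(N)=\beta$, then read off the affine solution. Your write-up is in fact more detailed than the paper's, which compresses the $k=N-1$ computation into the single line ``provided we define $\rho_N(N)$ as $\beta$''.
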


\begin{proof}
Recall that we denote by $\Delta_N $ the discrete Laplacian:
$(\Delta_N f)(k) = f(k-1) + f(k+1) - 2 f(k)$. Since $\mu_N$ is the
stationary state, $E_{\mu_N} [L_N f] = 0$ for all function
$f:\Omega_N\to\bb R$. Replacing $f$ by $\eta_k$, $2\le k\le N-1$, we
obtain that
\begin{equation*}
(\Delta_N \rho_N)(k) \;=\;0 \quad\text{for}\quad 2\le k\le N-1\;,
\end{equation*}
provided we define $\rho_N(N)$ as $\beta$. The assertion of the
lemma follows from these equations.
\end{proof}

Fix $k\in \Lambda_N \setminus\{1\}$, and place a second particle at
site $k$ at time $0$. This particle moves according to the stirring
dynamics in $\Lambda_N$ until it reaches site $1$, when it is
annihilated. This later specification is not very important in the
argument below, any other convention for the evolution of the particle
after the time it hits $1$ is fine.  Denote by $Z^k(s)$ the position
of the extra particle at time $s$ and by $d(A,j)$, $A
\subset\Lambda_N$, $j\in \Lambda_N$, the distance between $j$ and
$A$. The next lemma asserts that the process $\ms A(s)$ is extincted
before the random walk $Z^k(s)$ gets near to $\ms A(s)$ if $k\ge
\sqrt{N}$.

\begin{lemma}
\label{2-l9}
Let $\ell_N$ be a sequence such that $\ell_N\to\infty$, $\ell_N
\sqrt{N}\le N-1$. Then,
\begin{equation*}
\lim_{N\to \infty} \max_{\ell_N \sqrt{N} \le k<N}
\bb Q_N \big[ \, d(\ms A(s), Z^k(s)) =1 \text{ for some } s\ge 0 \, \big]
\;=\; 0\;.
\end{equation*}
\end{lemma}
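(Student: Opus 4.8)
The plan is to combine the two facts already established about the dual process~$\ms A(s)$---that it stays close to the left boundary (Lemma~\ref{2-l2}) and that it becomes extinct within time of order $N\ell_N$ (Lemma~\ref{cl03})---with an elementary random walk estimate showing that, on a window of that length, the walk $Z^k$ started from $k\ge\ell_N\sqrt N$ cannot descend to a neighbourhood of the range of $\ms A(s)$. Fix the intermediate scale $r_N=\lfloor\sqrt{\ell_N N}\,\rfloor$; since $\ell_N\to\infty$ and $\ell_N\sqrt N\le N-1$, one has $r_N\to\infty$, $r_N\le N-1$ for large $N$, and $r_N+1\le\tfrac12\,\ell_N\sqrt N$ for large $N$. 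Consider the events
\begin{equation*}
G^1_N=\big\{\ms A(s)\subset\{1,\dots,r_N\}\ \text{for all}\ s\ge0\big\}\,,
\qquad
G^2_N=\{T\le N\ell_N\}\,.
\end{equation*}
Every site ever occupied by $\ms A(s)$ is one of the creation sites $1,\dots,p$ or is reached from one of them by nearest-neighbour stirring steps; hence, once $p<r_N$, the complement of $G^1_N$ is contained in $\{\ms A(s)\ni r_N\ \text{for some}\ s\ge 0\}$, so $\bb Q_N[(G^1_N)^c]\to0$ by Lemma~\ref{2-l2} applied to the sequence $r_N$, while $\bb Q_N[(G^2_N)^c]\to0$ by Lemma~\ref{cl03}.

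On $G^1_N\cap G^2_N$, if $d(\ms A(s),Z^k(s))=1$ for some $s\ge0$ then $\ms A(s)\neq\varnothing$, so $s<T\le N\ell_N$, and $\ms A(s)\subset\{1,\dots,r_N\}$ forces $Z^k(s)\le r_N+1$; since $Z^k$ starts at $k>r_N+1$ and moves by nearest-neighbour steps, it must visit the site $r_N+1$ at some time $\le N\ell_N$. Consequently, uniformly over $k$ with $\ell_N\sqrt N\le k<N$,
\begin{equation*}
\bb Q_N\big[\,d(\ms A(s),Z^k(s))=1\ \text{for some}\ s\ge0\,\big]
\;\le\;\bb Q_N[(G^1_N)^c]+\bb Q_N[(G^2_N)^c]
+\bb Q_N\big[\,Z^k(s)=r_N+1\ \text{for some}\ s\le N\ell_N\,\big]\,.
\end{equation*}

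The last term is the only genuine computation. With the (permissible) convention that $Z^k$ freezes at $1$ once it reaches it, $Z^k$ is a nearest-neighbour random walk on $\{1,\dots,N-1\}$ reflected at $N-1$ and absorbed at $1$; since $(x+1-k)^2+(x-1-k)^2-2(x-k)^2=2$ at interior sites and the analogous quantity is no larger at $N-1$ and at $1$, the process $M_s:=(Z^k(s)-k)^2-\int_0^s\gamma(Z^k(r))\,dr$ is a mean-zero martingale for some $\gamma$ bounded above by $C_0:=2$. Optional stopping at $\tau=\inf\{s:Z^k(s)=r_N+1\}\wedge(N\ell_N)$ gives $\bb Q_N[(Z^k(\tau)-k)^2]\le C_0 N\ell_N$, and on the event $\{Z^k(s)=r_N+1\ \text{for some}\ s\le N\ell_N\}$ one has $Z^k(\tau)=r_N+1$, so
\begin{equation*}
\bb Q_N\big[\,Z^k(s)=r_N+1\ \text{for some}\ s\le N\ell_N\,\big]
\;\le\;\frac{C_0\,N\ell_N}{(k-r_N-1)^2}
\;\le\;\frac{C_0\,N\ell_N}{\big(\tfrac12\,\ell_N\sqrt N\big)^2}
\;=\;\frac{4C_0}{\ell_N}\,,
\end{equation*}
using $k\ge\ell_N\sqrt N$ and $r_N+1\le\tfrac12\,\ell_N\sqrt N$. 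Taking the maximum over the admissible $k$ and letting $N\to\infty$ in the previous two displays completes the proof.

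The delicate point is the hierarchy of scales rather than any single estimate: one needs the lifespan $\sim N\ell_N$ of $\ms A(s)$ to be negligible compared with the square $\sim\ell_N^2 N$ of the gap $Z^k$ must cross, and the choice $r_N=\sqrt{\ell_N N}$---lying between $\sqrt{N\ell_N}$ and $\ell_N\sqrt N$---is exactly what makes all three error terms vanish together.
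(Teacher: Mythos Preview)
Your proof is correct and follows essentially the same three-event decomposition as the paper (range of $\ms A$ via Lemma~\ref{2-l2}, extinction time via Lemma~\ref{cl03}, random-walk displacement of $Z^k$). The only cosmetic difference is that the paper introduces an auxiliary \emph{time} scale $m_N$ with $m_N\to\infty$, $m_N/\ell_N^2\to0$ while keeping the spatial threshold at $\ell_N\sqrt N/3$, whereas you keep the time at $N\ell_N$ and instead introduce an auxiliary \emph{spatial} scale $r_N=\sqrt{\ell_N N}$; your second-moment martingale argument for $Z^k$ is also more explicit than the paper's one-line appeal to the fact that $Z^k$ is a symmetric random walk.
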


\begin{proof}
Recall that we denote by $T$ the extinction time of the process $\ms
A(s)$.  The probability appearing in the lemma is bounded above by
\begin{equation*}
\bb Q_N \big[\, \ms A(s)\ni \ell_N \sqrt{N}/3 \text{ for some } s\ge 0
\,\big] \;+\;
\bb Q_N \big[\, \sup_{s\le T } |Z^k(s)-Z^k(0)| \ge \ell_N \sqrt{N}/3
\,\big] \;.  
\end{equation*}
By Lemma \ref{2-l2}, the first term vanishes as $N\to\infty$. Let
$m_N$ be a sequence such that $m_N\to\infty$, $m_N/\ell^2_N \to 0$. By
Lemma \ref{cl03}, the second term is bounded by
\begin{equation*}
\bb Q_N \big[\, \sup_{s\le N m_N } 
|Z^k(s)-Z^k(0)| \ge \ell_N \sqrt{N}/3 \,\big]  \;+\;  o_N(1) \;,
\end{equation*}
where $o_N(1) \to 0$ as $N\to\infty$. Since $Z^k$ evolves as a
symmetric, nearest-neighbor random walk and $m_N/\ell^2_N \to 0$, the
first term vanishes as $N\to\infty$.
\end{proof}

To prove a law of large numbers for the empirical measure under the
stationary state, we examine the correlations under the stationary
state. For $j$, $k\in \Lambda_N$, $j<k$, let
\begin{equation}
\label{2-15}
\rho_N(k) \;=\; E_{\mu_N} [\, \eta_k\,] \;, \quad
\varphi_N(j,k) \;=\; E_{\mu_N} [\,\eta_j \, \eta_k \,] 
\;-\;  \rho_N(j) \, \rho_N(k)\;.
\end{equation}

\begin{lemma}
\label{2-l6}
Let $\ell_N$ be a sequence such that $\ell_N\to\infty$, $\ell_N
\sqrt{N}\le N-1$. Then,
\begin{equation*}
\lim_{N\to\infty} \max_{\ell_N \sqrt{N} \le k< N} 
\big| \varphi_N(1,k) \big| \;= \; 0\;.
\end{equation*}
\end{lemma}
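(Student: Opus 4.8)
The plan is to use the graphical construction of Subsection~\ref{sec:GraphicalConstruction} together with the revealment process of Subsection~\ref{sec:devices}. Writing $\widehat{\bb P}$, $\widehat{\bb E}$ for the law and expectation of the stationary process built from the Poisson marks, we have, exactly as in the proof of Proposition~\ref{cp01}, $\rho_N(1)=\widehat{\bb E}[\eta_1(0)]$, $\rho_N(k)=\widehat{\bb E}[\eta_k(0)]$ and $E_{\mu_N}[\eta_1\eta_k]=\widehat{\bb E}[\eta_1(0)\eta_k(0)]$, so that $\varphi_N(1,k)=\widehat{\bb E}[\eta_1(0)\eta_k(0)]-\widehat{\bb E}[\eta_1(0)]\,\widehat{\bb E}[\eta_k(0)]$. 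The strategy is to show that, for $k$ far from the left boundary, $\eta_1(0)$ and $\eta_k(0)$ are reconstructed from disjoint families of Poisson marks with probability close to $1$, and then to deduce from this an almost-decoupling of the two variables.

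First I would recall that, by Lemmas~\ref{cl01} and~\ref{cl03}, almost surely $T<\infty$ and $\eta_1(0)$ is a deterministic function of the marks lying in the space-time trace $R$ of the dual process $\ms A(s)$, $0\le s\le T$. On the other hand, $\eta_k(0)$ is recovered by running backward from site $k$ the dual process $\ms B^k(s)$ with generator $\mf L_N$ started from $\{k\}$; until $\ms B^k$ first reaches site $1$ it coincides with the single stirring random walk $Z^k(s)$ of Lemma~\ref{2-l9}. Let $G$ be the event, appearing in complemented form in the proof of Lemma~\ref{2-l9}, that throughout $[0,T]$ the walk $Z^k$ stays in $\segint{\ell_N\sqrt{N}/3}{N-1}$ and $\ms A(s)\subseteq\segint{1}{\ell_N\sqrt{N}/3}$; that proof (which rests on Lemmas~\ref{2-l2} and~\ref{cl03}) shows that $\widehat{\bb P}(G^c)\to 0$ as $N\to\infty$, uniformly over $\ell_N\sqrt{N}\le k<N$. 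On $G$ the walk $Z^k$ never reaches site $1$ before dual-time $T$, so $\ms B^k$ has not yet branched and evolves, during $[0,T]$, as a stirring walk lying strictly to the right of $R$; after dual-time $T$ the process $\ms A$ is extinct, hence the marks used to finish reconstructing $\eta_k(0)$ lie at real times $<-T$. In both cases these marks avoid $R$, so on $G$ the variables $\eta_1(0)$ and $\eta_k(0)$ are functions of disjoint sets of Poisson marks.

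It then remains to convert this disjointness into an estimate. I would let $\widetilde\eta$ be the stationary process obtained from the marks after refreshing those inside $R$ by an independent Poisson field; since $R$ is a stopping set for the Poisson marks, $\widetilde\eta$ has the stationary law and is independent of $\eta_1(0)$, while on $G$ one has $\widetilde\eta_k(0)=\eta_k(0)$ because the reconstruction of $\eta_k(0)$ never touches $R$. Hence $\widehat{\bb E}[\eta_1(0)]\,\widehat{\bb E}[\eta_k(0)]=\widehat{\bb E}[\eta_1(0)\,\widetilde\eta_k(0)]$ and
\begin{equation*}
\big|\varphi_N(1,k)\big|\;=\;\big|\,\widehat{\bb E}\big[\eta_1(0)\,\big(\eta_k(0)-\widetilde\eta_k(0)\big)\big]\,\big|\;\le\;\widehat{\bb P}(G^c)\;,
\end{equation*}
and the right-hand side tends to $0$ uniformly over $\ell_N\sqrt{N}\le k<N$, which proves the lemma.

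The main obstacle is the decoupling step: one must justify, via the strong Markov property of the Poisson point processes over the random space-time region $R$ explored by $\ms A$, that reconstruction from disjoint marks yields genuine independence of $\eta_1(0)$ and $\widetilde\eta_k(0)$, and one must check carefully that on $G$ the reconstruction of $\eta_k(0)$ — including the branching that $\ms B^k$ undergoes once it reaches site $1$, which can only happen after dual-time $T$ on $G$ — never uses a mark lying in $R$. The remaining steps are routine consequences of Lemmas~\ref{cl01}, \ref{cl03}, \ref{2-l2} and~\ref{2-l9}.
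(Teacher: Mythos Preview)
Your proposal is correct and rests on the same idea as the paper: Lemma~\ref{2-l9} guarantees that the revealment process from $\{1\}$ and the one from $\{k\}$ do not interact with high probability, so $\eta_1(0)$ and $\eta_k(0)$ are asymptotically independent.

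The packaging of the decoupling differs, however. The paper couples the joint dual process $\ms A(s)$ started from $\{1,k\}$, split into descendants $\ms A_1$, $\ms A_2$ of $1$ and $k$, with two \emph{independent} dual processes $\hat{\ms A}_1$, $\hat{\ms A}_2$ started from $\{1\}$ and $\{k\}$; the coupling succeeds as long as $d(\ms A_1(s),\ms A_2(s))>1$, and Lemma~\ref{2-l9} bounds the failure probability directly. Your route instead freezes the space--time region $R$ explored by the dual from $\{1\}$ and resamples the Poisson marks inside $R$, invoking the strong Markov property of Poisson point processes over stopping sets to get independence of $\eta_1(0)$ and $\widetilde\eta_k(0)$. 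Your version is more explicit about what ``functions of disjoint marks'' means, but it carries the extra burden (which you flag) of checking that $R$ is a genuine stopping set and that the refreshed field $\widetilde M$ is again Poisson and independent of the exploration; the paper's direct coupling sidesteps this entirely and is shorter.
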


\begin{proof}
The probability $\rho_N(k)=\mu_N (\eta_k=1)$, $k\in\Lambda_N$, can be
computed by running the process $\ms A(s)$ starting from $\ms A(0)
=\{k\}$ until it is extincted, exactly as we estimated
$\rho_N(1)$. Similarly, to compute $E_{\mu_N} [\eta_1 \, \eta_k]$, we
run a process $\ms A(s)$ starting from $\ms A(0) =\{1, k\}$. In this
case, denote by $\ms A_1(s)$, $\ms A_2(s)$ the sets at time $s$ formed
by all descendants of $1$, $k$, respectively. Note that $\ms A_1(s)$
and $\ms A_2(s)$ may have a non-empty intersection. For instance, if a
particle in $\ms A_1(s)$ branches and a site $k\le p$ is occupied by a
particle in $\ms A_2(s)$.
	
To compare $E_{\mu_N}[\eta_1 \,\eta_k]$ with $E_{\mu_N} [\eta_1]\,
E_{\mu_N} [\eta_k]$, we couple a process $\ms A(s)$ starting from
$\{1,k\}$ with two independent processes $\hat{\ms A}_1(s)$, $\hat{\ms
  A}_2(s)$, starting from $\{1\}$, $\{k\}$, respectively. We say that
the coupling is successful if $\ms A_i(s) = \hat{\ms A}_i(s)$, $i=1$,
$2$, for all $s\ge 0$. In this case, the value of the occupation
variables $\eta_1$, $\eta_k$ coincide for both processes.
	
Until $d(\ms A_1(s), \ms A_2(s))=1$, it is possible to couple $\ms
A(s)$ and $\hat{\ms A}(s)$ in such a way that $\ms A_i(s) = \hat{\ms
  A}_i(s)$, $i=1$, $2$. Hence, by Lemma \ref{2-l9}, since $k\ge \ell_N
\sqrt{N}$, the coupling is successful with a probability which
converges to $1$ as $N\to\infty$.
\end{proof}

\begin{lemma}
\label{l09}
For every $\delta>0$,
\begin{equation*}
\lim_{N\to\infty} \max_{\delta N \le j<k<N}\,
\big|\, \varphi_N(j,k) \, \big| \; =\; 0\;. 
\end{equation*}
\end{lemma}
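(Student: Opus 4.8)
The plan is to run the argument behind Lemmas \ref{2-l6} and \ref{2-l9} one level up: compare the dual representation of the two-point function with two independent copies of the one-point dual, and reduce the bound to the statement that the two clouds of descendants essentially never meet. Fix $\delta>0$ and a pair $\delta N\le j<k<N$, and let $\ell_N\to\infty$, $m_N\to\infty$ be slowly growing sequences to be chosen at the end so that $m_N\ll\ell_N^2$ and $\ell_N=o(\sqrt N)$ (for instance $\ell_N=N^{2/5}$, $m_N=N^{1/5}$). As in the proof of Lemma \ref{2-l6}, $E_{\mu_N}[\eta_j\,\eta_k]$ is obtained by running the dual process $\ms A(s)$ from $\ms A(0)=\{j,k\}$ and splitting $\ms A(s)=\ms A_1(s)\cup\ms A_2(s)$ into the descendants of $j$ and of $k$, while $\rho_N(j)\,\rho_N(k)$ corresponds to two independent dual processes $\hat{\ms A}_1(s)$, $\hat{\ms A}_2(s)$ started from $\{j\}$ and from $\{k\}$. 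Coupling the two constructions through the same Poisson marks, one has $\ms A_i(s)=\hat{\ms A}_i(s)$ for all $s$, $i=1,2$ --- so that the reconstructed values $\eta_j(0)$, $\eta_k(0)$ agree for the two constructions --- on the event $\ms G$ that the clouds never interact, namely that there is never a stirring arrow across the edge separating $\ms A_1$ from $\ms A_2$ and never a branching event at site $1$ involving both. Since $|\eta_j\eta_k|\le1$, this gives $|\varphi_N(j,k)|\le\bb Q_N[\ms G^{\,c}]$, and the whole problem reduces to proving $\bb Q_N[\ms G^{\,c}]\to0$ uniformly over $\delta N\le j<k<N$.

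The next step is to use the order structure. The stirring dynamics preserves the order of the two particles and never identifies two sites, so $\max\ms A_1(s)<\min\ms A_2(s)$ for every $s$; in particular the cloud that first touches site $1$ must be $\ms A_1$, and as long as no cloud has touched site $1$ each of them is a single random walker, so no interaction is possible. If $\ms A_2$ is absorbed at the right boundary before $\ms A_1$ ever reaches site $1$ (hence before any branching), then $\eta_k(0)$ is read off a reservoir mark that is independent of everything seen by $\ms A_1$, and $\ms A_1$ proceeds alone, so $\ms G$ holds. Therefore, on $\ms G^{\,c}$, the cloud $\ms A_1$ reaches site $1$ at some random time $\tau_1$, and we condition there by the strong Markov property. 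After $\tau_1$ the cloud $\ms A_1$ evolves as a dual started from $\{1\}$: by Lemma \ref{2-l2} it remains inside $\{1,\dots,\ell_N\}$, and by Lemma \ref{cl03} (applied with $m_N$ in place of the sequence there) it is extinct before time $\tau_1+Nm_N$. During that window $\ms A_2$ is still the single walker $W_2(s)$, and $\ms G^{\,c}$ forces $\ms A_1$ and $\ms A_2$ to reach distance one before $\tau_1+Nm_N$, up to the event --- negligible by Lemma \ref{cl03} --- that $\ms A_1$ survives longer.

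The proof then splits on the position $W_2(\tau_1)$ of the trailing particle at time $\tau_1$. On $\{W_2(\tau_1)\ge\ell_N\sqrt N\}$, restarting the clocks at $\tau_1$ and viewing $\ms A_1$ as a dual from $\{1\}$ and $W_2$ as the auxiliary walker of Lemma \ref{2-l9} started at distance at least $\ell_N\sqrt N$ from the boundary, Lemma \ref{2-l9} --- together with a maximal inequality (using $m_N\ll\ell_N^2$) controlling the displacement of $W_2$ over a time $Nm_N$ --- shows that the clouds do not reach distance one before $\ms A_1$ dies, and afterwards there is nothing of $\ms A_1$ left to interact with; hence $\ms G$ holds with probability $1-o_N(1)$, uniformly. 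It therefore remains to bound $\bb Q_N[\,W_2(\tau_1)<\ell_N\sqrt N\,]$, and this is the crux. Heuristically, $\tau_1$ is the first time a walk started at distance $\ge\delta N$ from the boundary reaches it, so it is with overwhelming probability of order $N^2$ (a maximal inequality bounds $\bb Q_N[\tau_1\le\varepsilon N^2]$ by $C\varepsilon/\delta^2$); by that time the trailing particle has had a time comparable with its relaxation time on $\{1,\dots,N-1\}$ to spread out, so the law of $W_2(\tau_1)$ should put mass at most $C\,\ell_N\sqrt N/N=C\,\ell_N/\sqrt N=o(1)$ on $\{2,\dots,\ell_N\sqrt N\}$.

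The main obstacle is exactly this last estimate: controlling the conditional law of the trailing particle $W_2$ at the random, walker-dependent time $\tau_1$, uniformly in the starting points --- most delicately when $k-j$ is small, where the two particles start close together. The natural route is to couple the ordered pair of stirring particles with two independent random walks on $\{1,\dots,N-1\}$, so that $\tau_1$ becomes a stopping time of one of them and $W_2(\tau_1)$ the position of the other at that time, and then to combine Gaussian heat-kernel bounds for a single reflected walk on the segment with the lower bound on $\tau_1$. The remaining points --- the symmetric sub-cases, the possibility that $W_1$ reaches the right boundary before site $1$, and the negligible event that $\ms A_1$ outlives $\tau_1+Nm_N$ --- are routine.
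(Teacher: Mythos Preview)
Your approach is workable in principle but takes a different route from the paper, and the step you flag as ``the main obstacle'' is left genuinely incomplete.

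The paper does not extend the coupling of Lemma~\ref{2-l6} to general $(j,k)$. Instead it observes that for $j,k\ge 2$ the identity $E_{\mu_N}[L_N(\eta_j-\rho_N(j))(\eta_k-\rho_N(k))]=0$ yields a discrete Laplace equation for $\varphi_N$ on the simplex $\widehat{\bb D}_N=\{2\le j<k\le N-1\}$, with boundary data $\varphi_N(1,\cdot)$ on the left edge and $0$ on the right. This splits $\varphi_N=\varphi_N^{(1)}+\varphi_N^{(2)}$: the piece $\varphi_N^{(1)}$ coming from the diagonal forcing has the explicit form $-[\beta-\rho_N(1)]^2(j-1)(N-k)/[(N-1)^2(N-2)]=O(1/N)$, and the harmonic piece $\varphi_N^{(2)}(j,k)=\bs E_{(j,k)}[\varphi_N(1,X^2_N(H_N))\,\bs 1\{X^1_N(H_N)=1\}]$ is controlled by combining Lemma~\ref{2-l6} with the hitting estimate \eqref{g16b}, already proved in Section~\ref{sec03}. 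No new coupling argument is needed.

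Your crux --- the bound on $\bb Q_N[W_2(\tau_1)<\ell_N\sqrt N]$ --- is \emph{exactly} estimate \eqref{g16b}: the probability that the two-particle stirring walk started from $(j,k)$ with $j>\delta N$ exits the simplex through $\{(1,m):m\le\ell_N\sqrt N\}$. The paper proves this by unfolding the simplex $\triangle_N$ into the square $\Box_N$ (so the exclusion constraint disappears) and comparing with the explicit harmonic function $\log|\cdot|$ of planar random walk. Your proposed route --- ``couple the ordered pair of stirring particles with two independent random walks so that $\tau_1$ becomes a stopping time of one of them and $W_2(\tau_1)$ the position of the other'' --- is not correct as stated: two stirring particles are \emph{not} jointly distributed as two independent walks, and the hitting time $\tau_1$ of the left particle depends on the interaction. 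The unfolding argument, or an equivalent potential-theoretic estimate, is what is actually required here; once you have it, the PDE decomposition above gives the result in a few lines and makes the detour through the coupling and the case analysis on $W_2(\tau_1)$ unnecessary.
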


The proof of this lemma is similar to the one Lemmata \ref{l02},
\ref{l06}. As the arguments are exactly the same, we just present
the main steps.  Denote by $\widehat{\bb D}_N$ the discrete simplex
defined by
\begin{equation*}
\widehat{\bb D}_N \;=\; \{(j,k) : 2 \le j<k\le N-1\}\;,
\end{equation*}
and by $\partial\, \widehat{\bb D}_N$ its boundary:
$\partial\, \widehat{\bb D}_N = \{(1,k) : 3 \le k\le N-1\} \cup
\{(j,N) : 2 \le j\le N-2\}$. Note that the points $(1,k)$ belong to
the boundary and not to the set.  

Denote by $\ms L_N$ the generator of the symmetric, nearest-neighbor
random walk on $\widehat{\bb D}_N$ with absorption at the boundary:
For $(j,k)\in \widehat{\bb D}_N$,
\begin{gather*}
(\ms L_N \phi)(j,k) \;=\; (\bs \Delta \phi) (j,k) \;,
\quad\text{for}\quad k-j>1\;, \\
(\ms L_N \phi)(k,k+1) \;=\; (\bs \nabla^-_1 \phi)(k,k+1) 
\;+\;  (\bs \nabla^+_2 \phi)(k,k+1)
\quad\text{for}\quad  1< k < N-2 \;.
\end{gather*}
In these formulae, $\bs \nabla^\pm_i$, resp. $\bs \Delta$, represent
the discrete gradients, resp. Laplacians, introduced below equation
\eqref{g10}.

As $E_{\mu_N}[L_N \{\eta_j-\rho_N(j)\}\, \{\eta_k-\rho_N(k)\} ]=0$,
straightforward computations yield that the two-point correlation
function $\varphi_N$ introduced in \eqref{2-15} is the unique solution
of 
\begin{equation}
\label{g11b}
\begin{cases}
\vphantom{\Big\{}
(\ms L_N \psi_N) (j,k) + F_N (j,k) = 0 \;,
\;\; (j,k) \in \widehat{\bb D}_N \;, \\
\vphantom{\Big\{}
\psi_N(j,k) \,=\,  b_N(j,k) \;,
\;\; (j,k) \in \partial \, \widehat{\bb D}_N\;,
\end{cases}   
\end{equation}
where $F_N: \widehat{\bb D}_N \to \bb R$ and $b_N: \partial \,
\widehat{\bb D}_N \to \bb R$ are given by
\begin{equation*}
F_N(j,k) \;=\; -\,  [\rho_N(j+1)-\rho_N(j)]^2 \, \bs 1\{k=j+1\} \;,
\quad 
b_N(j,k) \;=\; \varphi_N(j,k) \, \bs 1\{j=1\}\;.
\end{equation*}

Denote by $\varphi^{(1)}_N$, resp. $\varphi^{(2)}_N$, the solution of
\eqref{g11b} with $b_N=0$, resp. $F_N=0$. It is clear that
$\varphi_N = \varphi^{(1)}_N + \varphi^{(2)}_N$. Let
$X_N(t) =(X^1_N(t),X^2_N(t))$ be the continuous-time Markov chain on
$\widehat{\bb D}_N \cup \partial \, \widehat{\bb D}_N$ associated to
the generator $\ms L_N$. Let $\bs P_{(j,k)}$ be the distribution of
the chain $X_N$ starting from $(j,k)$. Expectation with respect to
$\bs P_{(j,k)}$ is represented by $\bs E_{(j,k)}$.

\begin{proof}[Proof of Lemma \ref{l09}]
The piece $\varphi^{(1)}_N$ of the covariance has an explicit
expression. In view of Lemma \ref{2-l5}, for $1\le j<k\le N$,
\begin{equation*}
\varphi^{(1)}_N (j,k) \;=\; -\, \frac{[\beta - \rho_N(1)]^2}{(N-1)^2}\,
\frac{(j-1)\, (N-k)}{N-2} \;\le\; \frac{C_0}N
\end{equation*}
for some finite constant $C_0$, independent of $N$.  The piece
$\varphi^{(2)}_N$ requires a more careful analysis.

Let $H_N$ be the hitting time of the boundary $\partial\, \widehat{\bb
  D}_N$:
\begin{equation*}
H_N\;=\; \inf \big\{t\geq 0 : X_N(t) \in \partial\, \widehat{\bb D}_N \,\big\}\;.
\end{equation*}
We have that
\begin{equation*}
\varphi^{(2)}_N (j,k)  \;=\; \bs E_{(j,k)} \big[ b_N(X_N(H_N)) \big]
\;=\; \bs E_{(j,k)} \big[ \varphi_N(X_N(H_N)) \, \bs 1\{X^1_N(H_N)=1\}\, \big] \;.
\end{equation*}

Let $k_N$ be a sequence such that $k_N \ll N$. By \eqref{g16b}, for
all $\delta>0$,
\begin{equation*}
\lim_{N\to\infty} \max_{\delta N \le l<m<N} \, \bs P_{(l,m)} \big[ \, X^2_N(H_N) \le k_N \,
\big] \;=\;0\;.
\end{equation*}
Therefore, setting $k_N = \ell_N \sqrt{N}$, where
$1\ll \ell_N \ll \sqrt{N}$, by Lemma \ref{2-l6},
\begin{equation*}
\lim_{N\to\infty} \max_{\substack{(j,k) \in \widehat{\bb D}_N \\
    j>\delta N}} \big|\, \varphi^{(2)}_N (j,k) \,\big| 
\; \le\; \lim_{N\to\infty} 
\max_{\ell_N\sqrt{N} \le k <N} \big| \varphi_N(1,k) \big|  \;=\;0\;.
\end{equation*}
This proves the lemma.
\end{proof}

\begin{proof}[Proof of Theorem \ref{mt1}]
The first assertion of the theorem has been proved in Lemma
\ref{2-l5}. The proof of the second one is identical to the
proof of Theorem \ref{mt0}.
\end{proof}

\section{Speeded-up boundary conditions}
\label{sec5}

Recall that we denote by $\mu$, resp. $\mu_N$, the stationary state of
the Markov chain on $\Omega^*_p$, resp. $\Omega_{N,p}$.  Fix a smooth
profile $u: [0,1] \to (0,1)$ such that $u(0)=\rho(0)$, $u(1) = \beta$,
and let $\nu_{N,p}$ be the product measure defined by
\begin{equation*}
\nu_{N,p}(\xi,\eta) \;=\; \mu(\xi)\, \nu^N_{u}(\eta)\;, \quad \xi\in
\Omega^*_p\,,\, \eta\in\Omega_N\;,
\end{equation*}
where $\nu^N_{u}$ is the product measure on $\Omega_N$ with marginals
given by $\nu^N_{u}\{\eta_k=1\}=u(k/N)$.

Denote by $f_N$ the density of $\mu_N$ with respect to $\nu_{N,p}$,
and by $F_N: \Omega^*_p \to \bb R_+$ the density given by
\begin{equation*}
F_N(\xi) \;=\; \int_{\Omega_N}  f_N(\xi,\eta) \, \nu^N_{u}(d\eta)\;.
\end{equation*}

\begin{lemma}
\label{l10}
There exists a finite constant $C_0$ such that
\begin{equation*}
\big|\, \rho_N(0) - \rho(0)\,\big|\;\le\; C_0/\sqrt{\ell_N}
\end{equation*}
for all $N\ge 1$.
\end{lemma}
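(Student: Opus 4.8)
The plan is to make quantitative the heuristic given below Theorem~\ref{mt2}: because the left boundary is run at speed $\ell_N$, the $\Omega^*_p$-marginal $\bar\mu_N$ of $\mu_N$ is forced to be within $O(1/\sqrt{\ell_N})$ of its own invariant measure $\mu$ for the generator $L_l$. Concretely, $d\bar\mu_N/d\mu=F_N$, so $\rho_N(0)-\rho(0)=E_\mu\big[(\eta_0-\rho(0))(F_N-1)\big]$. Writing $F_N-1=(\sqrt{F_N}-1)(\sqrt{F_N}+1)$, Cauchy--Schwarz in $L^2(\mu)$ together with $\|\eta_0-\rho(0)\|_\infty\le 1$, $E_\mu[F_N]=1$ and Jensen's inequality ($E_\mu[\sqrt{F_N}]\le 1$) give
\[
|\rho_N(0)-\rho(0)| \;\le\; \|\sqrt{F_N}-1\|_{L^2(\mu)}\,\|\sqrt{F_N}+1\|_{L^2(\mu)} \;\le\; 2\,\|\sqrt{F_N}-1\|_{L^2(\mu)}.
\]
Since $\|\sqrt{F_N}-1\|_{L^2(\mu)}^2 = 2\big(1-E_\mu[\sqrt{F_N}]\big)$ and $\mathrm{Var}_\mu(\sqrt{F_N}) = \big(1-E_\mu[\sqrt{F_N}]\big)\big(1+E_\mu[\sqrt{F_N}]\big)\ge 1-E_\mu[\sqrt{F_N}]$, and since the chain with generator $L_l$ is irreducible on the finite set $\Omega^*_p$ with invariant measure $\mu$ — so its Dirichlet form $D_l(g):=\langle -L_l g,g\rangle_\mu=\tfrac12\sum_{\xi,\xi'}c^l(\xi,\xi')\,\mu(\xi)\,(g(\xi')-g(\xi))^2$ (second identity by stationarity of $\mu$) is that of an irreducible reversible chain and hence satisfies a Poincaré inequality $\mathrm{Var}_\mu(g)\le\gamma^{-1}D_l(g)$ with $\gamma>0$ depending only on $L_l$ — everything reduces to proving $\ell_N\,D_l(\sqrt{F_N})\le C_0$.

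The bound $\ell_N D_l(\sqrt{F_N})\le C_0$ I would obtain from the standard entropy-production estimate. Let $f_N=d\mu_N/d\nu_{N,p}$ and let $L_N^{*}$ be the adjoint of $L_N$ in $L^2(\nu_{N,p})$. Stationarity of $\mu_N$ gives $\int (L_N\log f_N)\,d\mu_N=0$; applying the elementary inequality $a\log(b/a)\le (b-a)-(\sqrt b-\sqrt a)^2$ jumpwise to each of the four pieces $\ell_N L_l$, $L_{0,1}$, $L_{b,N}$, $L_{r,N}$ of $L_N$ and summing yields
\[
2\,\ell_N\,\mathcal E_{L_l}(\sqrt{f_N}) \;\le\; \sum_{\text{pieces}} 2\,\mathcal E_{\bullet}(\sqrt{f_N}) \;\le\; \int (L_N^{*}\mathbf 1)\,d\mu_N \;\le\; \big\|L_N^{*}\mathbf 1\big\|_\infty,
\]
where $\mathcal E_{\bullet}$ is the (nonnegative) Dirichlet form of the corresponding piece with respect to $\nu_{N,p}$. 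Because $\nu_{N,p}=\mu\otimes\nu^N_u$ and $\mu$ is \emph{exactly} invariant for $L_l$, the $\ell_N L_l$-contribution to $L_N^{*}\mathbf 1$ vanishes identically; the remaining contributions stay bounded uniformly in $N$ (the right-reservoir term is $O(1/N)$ since $u(1)=\beta$, and each bulk or $0$--$1$ stirring bond contributes $O(1/N)$ because $u$ is $C^1$, there being $O(N)$ of them), so $\|L_N^{*}\mathbf 1\|_\infty\le C_0$. Finally $\mathcal E_{L_l}(\sqrt{f_N})\ge D_l(\sqrt{F_N})$ by convexity of the Dirichlet form under the marginalisation $F_N=\int f_N\,d\nu^N_u$ (the reverse triangle inequality in $L^2(\nu^N_u)$ applied termwise), which gives $\ell_N D_l(\sqrt{F_N})\le C_0$ and closes the argument.

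The main obstacle is the entropy-production step, and within it the verification that the ``drift error'' $\|L_N^{*}\mathbf 1\|_\infty$ produced by the mismatch between $\nu_{N,p}$ and the bulk and right-boundary dynamics remains bounded as $N\to\infty$; this is where one uses that $u$ is at least $C^1$ (so that consecutive-site density increments are $O(1/N)$) and that $u$ is chosen with $u(1)=\beta$. Everything else — the two Cauchy--Schwarz steps, the Poincaré inequality on a fixed finite state space, and the convexity of the Dirichlet form under marginalisation — is routine.
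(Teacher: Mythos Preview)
Your argument is correct and complete, but it takes a genuinely different route from the paper's.

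The paper never touches the full density $f_N$ or entropy production. Instead it exploits the simple observation that for any $g:\Omega^*_p\to\bb R$ one has $L_Ng=\ell_N L_lg+L_{0,1}g$, so stationarity gives $|E_{\mu_N}[L_lg]|\le 2\|g\|_\infty/\ell_N$. Rewriting $E_{\mu_N}[L_lg]=\int (L_lg)F_N\,d\mu=\int g\,L_l^*F_N\,d\mu$ and letting $g$ range over all test functions yields $\|L_l^*F_N\|_\infty\le C_0/\ell_N$ directly (finite state space). Then $D_l(F_N)=-\langle L_l^*F_N,F_N\rangle_\mu\le C_0/\ell_N$, which on a finite irreducible chain forces $\|F_N-1\|_\infty\le C_0/\sqrt{\ell_N}$, and $|\rho_N(0)-\rho(0)|\le\|F_N-1\|_\infty$. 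No square roots, no Poincar\'e inequality, no convexity of the Dirichlet form under marginalisation, no analysis of $L_N^*\mathbf 1$ bond by bond.

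Your approach, by contrast, controls $D_l(\sqrt{F_N})$ via the full entropy-production machinery. It is longer but more robust: it is the standard route in hydrodynamic limits and would generalise, for instance, to reference measures that are only approximately invariant for $L_l$. One small inaccuracy to flag: the $0$--$1$ stirring bond contributes $O(1)$ to $L_N^*\mathbf 1$, not $O(1/N)$, because $\mu$ is not a product measure and the ratio $\mu(\sigma^0\xi)/\mu(\xi)$ has no reason to be close to $(1-u(0))/u(0)$; the choice $u(0)=\rho(0)$ does not help here. This does not affect your conclusion, since a single $O(1)$ term still leaves $\|L_N^*\mathbf 1\|_\infty$ bounded.
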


\begin{proof}
Fix a function $g: \Omega^*_p \to \bb R$. As $\mu_N$ is the stationary
state, and since $L_N g = \ell_N L_l g + L_{0,1} g$
\begin{equation*}
0\;=\; E_{\mu_N} \big[ L_N g \big] \;=\;
E_{\mu_N} \big[ \ell_N\,  L_l g + L_{0,1} g \big]\;,
\end{equation*}
so that $|\, E_{\mu_N} [ L_l g] \,| \le 2\Vert g\Vert_\infty
/\ell_N$. Since
\begin{equation*}
E_{\mu_N} [ L_l g]  \;=\; 
\int_{\Omega_{N,p}} (L_l g)(\xi) \, f_N(\xi,\eta) \, \nu_{N,p}(d\xi,d\eta) 
\;=\; \int_{\Omega^*_p} (L_l g)(\xi) \, F_N(\xi) \, \mu(d\xi) \;,
\end{equation*}
for every $g: \Omega^*_p \to \bb R$,
\begin{equation*}
\big|\, \int_{\Omega^*_p} g(\xi) \, (L^*_lF_N)(\xi) \, \mu(d\xi)\,\big| \;\le\;
2\Vert g\Vert_\infty/\ell_N\;,
\end{equation*}
where $L^*_l$ represents the adjoint of $L_l$ in $L^2(\mu)$. Since $\mu$
is the stationary state, $L^*_l$ is the generator of a irreducible
Markov chain on $\Omega^*_p$. It follows from the previous identity that
\begin{equation*}
\int_{\Omega^*_p} \, \big|\,  (L^*_lF_N)(\xi) \,\big| \, \mu(d\xi) \;\le\;
C_0/\ell_N
\end{equation*}
for some finite constant $C_0$. Hence, since $\mu(\xi)>0$ for all
$\xi\in \Omega^*_p$, $\Vert L^*_lF_N \Vert_\infty \le C_0/\ell_N$.
In particular,
\begin{equation*}
-\, \int_{\Omega^*_p} \, F_N(\xi) \,  (L^*_l F_N)(\xi) \, \mu(d\xi) 
\;\le\; (C_0/\ell_N)\,  \int_{\Omega^*_p} \, F_N(\xi) \, \mu(d\xi) 
\;\le\; C_0/\ell_N\;.
\end{equation*}
Note that the expression on the left hand side is the Dirichlet form.
Hence, by its explicit expression, $\max_{\xi, \xi'} [F_N(\xi') -
F_N(\xi)]^2 \le C_0/\ell_N$, where the maximum is carried over all
configuration pairs $\xi$, $\xi'$ such that $R(\xi, \xi') + R(\xi',
\xi)>0$, $R$ being the jump rate. In particular, as the
chain is irreducible,
\begin{equation*}
\big\Vert \, F_N - 1 \, \big\Vert_\infty \;=\;
\big\Vert \, F_N - \int_{\Omega^*_p} F_N (\xi) \, \mu(d\xi) \,
\big\Vert_\infty\;\le\; C_0/\sqrt{\ell_N}\;. 
\end{equation*}

We are now in a position to prove the lemma. One just needs to observe
that 
\begin{equation*}
\big|\, \rho_N(0) - \rho(0)\,\big|\;=\;
\Big|\, E_{\mu_N}[\eta_0] - E_{\mu}[\eta_0] \,\Big|
\;=\; \Big|\, \int_{\Omega^*_p} \xi_0 F_N(\xi) 
\, \mu(d\xi)  -  \int_{\Omega^*_p} \xi_0 
\, \mu(d\xi)\,\Big|\;,
\end{equation*}
and that this expression is bounded by $\Vert \, F_N - 1 \,
\Vert_\infty$. 
\end{proof}

Let
\begin{equation*}
\varphi_N(j,k) \;=\; E_{\mu_N} [\,\eta_j \, \eta_k \,] 
\;-\;  \rho_N(j) \, \rho_N(k)\;, \quad 
j\,,\, k \,\in\, \Lambda_{N,p} \,,\, j<k \;.
\end{equation*}

\begin{lemma}
\label{l11}
There exists a finite constant $C_0$ such that
$|\varphi_N(0,k)|\le C_0/\sqrt{\ell_N}$ for all $2\le k<N$.
\end{lemma}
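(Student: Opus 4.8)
The plan is to reproduce, for a ``mixed'' quantity carrying one bulk coordinate, the density argument used in the proof of Lemma \ref{l10}. Fix $2\le k<N$ and set, for $\xi\in\Omega^*_p$,
\[
G^{(k)}_N(\xi) \;=\; \int_{\Omega_N} \eta_k\, f_N(\xi,\eta)\, \nu^N_u(d\eta)\;,
\]
so that $\int_{\Omega^*_p}G^{(k)}_N\,d\mu=\rho_N(k)$ and, since $\eta_0$ is a coordinate of $\xi$, $E_{\mu_N}[\eta_0\eta_k]=\int_{\Omega^*_p}\xi_0\,G^{(k)}_N(\xi)\,\mu(d\xi)$. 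Writing $G^{(k)}_N=\rho_N(k)+r_N$ and using $\int_{\Omega^*_p}\xi_0\,\mu(d\xi)=\rho(0)$ yields
\[
\varphi_N(0,k)\;=\;\rho_N(k)\,[\rho(0)-\rho_N(0)]\;+\;\int_{\Omega^*_p}\xi_0\, r_N\,\mu(d\xi)\;,
\]
hence $|\varphi_N(0,k)|\le \rho_N(k)\,|\rho(0)-\rho_N(0)|+\Vert r_N\Vert_\infty$. The first term is $O(1/\sqrt{\ell_N})$ by Lemma \ref{l10}, so it suffices to show $\Vert G^{(k)}_N-\rho_N(k)\Vert_\infty\le C_0/\sqrt{\ell_N}$.

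To this end I would exploit the stationarity identity $0=E_{\mu_N}[L_N h]$ with test functions $h(\eta)=g(\xi)\,\eta_k$, where $g:\Omega^*_p\to\bb R$ with $\Vert g\Vert_\infty\le 1$. Since $L_l$ acts only on the sites $\{-p,\dots,0\}$ and $\eta_k$ is a bulk variable, $L_l h=(L_l g)\,\eta_k$, and as $\mu$ is stationary for $L_l$,
\[
E_{\mu_N}[\ell_N L_l h]\;=\;\ell_N\int_{\Omega^*_p} g\;(L^*_l G^{(k)}_N)\;d\mu\;,
\]
with $L^*_l$ the adjoint of $L_l$ in $L^2(\mu)$, exactly as in Lemma \ref{l10}. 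The remaining contributions are bounded uniformly: $L_{b,N}h=g\cdot L_{b,N}\eta_k$ and $L_{r,N}h=g\cdot L_{r,N}\eta_k$ equal $g$ times functions of $\eta$ bounded by $2$ (with $L_{r,N}h=0$ unless $k=N-1$), while $\Vert L_{0,1}h\Vert_\infty\le 2\Vert g\Vert_\infty$; hence $|E_{\mu_N}[(L_{b,N}+L_{r,N}+L_{0,1})h]|\le C_0$ uniformly in $k$ and $N$. From $0=E_{\mu_N}[L_N h]$ we therefore get $\ell_N\,|\int_{\Omega^*_p} g\,(L^*_l G^{(k)}_N)\,d\mu|\le C_0$ for every $g$ with $\Vert g\Vert_\infty\le1$, whence $\int_{\Omega^*_p}|L^*_l G^{(k)}_N|\,d\mu\le C_0/\ell_N$, and, because $\mu$ charges every point of $\Omega^*_p$, $\Vert L^*_l G^{(k)}_N\Vert_\infty\le C_0/\ell_N$.

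The final step is verbatim the end of the proof of Lemma \ref{l10}: the Dirichlet-form bound $-\int_{\Omega^*_p} G^{(k)}_N\,(L^*_l G^{(k)}_N)\,d\mu\le \Vert L^*_l G^{(k)}_N\Vert_\infty\int_{\Omega^*_p} G^{(k)}_N\,d\mu\le C_0/\ell_N$, together with the irreducibility of the chain with generator $L^*_l$, gives $\max_{\xi,\xi'}[G^{(k)}_N(\xi')-G^{(k)}_N(\xi)]^2\le C_0/\ell_N$, that is $\Vert G^{(k)}_N-\rho_N(k)\Vert_\infty\le C_0/\sqrt{\ell_N}$ since $\rho_N(k)=\int_{\Omega^*_p} G^{(k)}_N\,d\mu$. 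Combined with the reduction of the first paragraph and with Lemma \ref{l10} this proves the lemma. I do not expect a genuine obstacle: the only point to notice is that replacing the test function $g(\xi)$ of Lemma \ref{l10} by $g(\xi)\,\eta_k$ produces, beyond the principal term $\ell_N\,L^*_l G^{(k)}_N$, only terms that are $O(1)$ in $N$ — in particular they do not carry the speed-up factor $\ell_N$ — and hence become negligible after dividing by $\ell_N$, leaving the $L^2(\mu)$ Dirichlet-form argument of Lemma \ref{l10} intact.
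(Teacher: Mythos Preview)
Your proposal is correct and is essentially the same argument as the paper's own proof: you introduce the same marginal $G^{(k)}_N$, apply stationarity to the test function $g(\xi)\,\eta_k$, observe that only the $\ell_N L_l$ piece carries the speed-up while the remaining generators contribute $O(1)$, and conclude via the Dirichlet-form step of Lemma~\ref{l10}. The only cosmetic difference is that you display the decomposition $\varphi_N(0,k)=\rho_N(k)[\rho(0)-\rho_N(0)]+\int\xi_0\,r_N\,d\mu$ upfront, whereas the paper arrives at the equivalent identity $E_{\mu_N}[\eta_0\eta_k]-\rho(0)\rho_N(k)$ at the end and then invokes Lemma~\ref{l10}.
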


\begin{proof}
The argument is similar to the one of the previous lemma.  Fix
$0<k<N$, and denote by $G_N = G^{(k)}_N : \Omega^*_p \to \bb R_+$ the
non-negative function given by
\begin{equation*}
G_N(\xi) \;=\; \int_{\Omega_N}  \eta_k\, f_N(\xi,\eta) \, \nu^N_{u}(d\eta)\;.
\end{equation*}
With this notation,
\begin{equation}
\label{g19}
E_{\mu_N} [\,\eta_0 \, \eta_k \,]  \;=\; \int_{\Omega^*_p} \xi_0\,
G_N (\xi) \, \mu(d\xi)  \;.
\end{equation}

Fix $g:\Omega^*_p \to \bb R$ and $k\ge 2$. As $k\ge 2$, 
$L_N (g \, \eta_k) = \eta_k \, L_N g + g L_N \eta_k$. Thus, 
since $\mu_N$ is the stationary state,
\begin{equation*}
0\;=\; E_{\mu_N} \big[\, L_N (g \, \eta_k) \,\big]  \;=\; 
\int_{\Omega_{N,p}}  (\ell_N \, L_l  + L_{0,1}) \, g \, \eta_k \, f_N \, d
\nu_{N,p} \;+\; E_{\mu_N} \big[\, g\, L_N \eta_k \,\big]  \;.
\end{equation*}
By definition of $G_N$ and since $|L_N \eta_k|\le 2$, $|L_{0,1} \, g|
\le 2\Vert g\Vert_\infty$, 
\begin{equation*}
\Big|\, \int_{\Omega^*_p}  (L_l  \, g)(\xi) \, G_N(\xi) \, 
\mu(d\xi) \, \Big|\;\le\; (4/\ell_N)\, \Vert g\Vert_\infty \;.
\end{equation*}

The argument presented in the proof of the previous lemma yields that
\begin{equation*}
\big\Vert \, G_N - \int_{\Omega^*_p} G_N (\xi) \, \mu(d\xi) \,
\big\Vert_\infty\;\le\; C_0/\sqrt{\ell_N}\;. 
\end{equation*}
Therefore,
\begin{equation*}
\Big|\, \int_{\Omega^*_p}  \xi_0  \,  \Big\{ G_N(\xi) 
- \int_{\Omega^*_p} G_N (\xi') \, \mu(d\xi')  \Big\} \, 
\mu(d\xi) \, \Big|\;\le\; C_0/\sqrt{\ell_N}\;. 
\end{equation*}
By definition of $G_N$ and by \eqref{g19}, the expression inside the
absolute value is equal to
\begin{equation*}
E_{\mu_N} [\,\eta_0 \, \eta_k \,] \;-\; \rho(0)\, \rho_N(k)\;.
\end{equation*}
The assertion of the lemma follows from the penultimate displayed
equation and from Lemma \ref{l10}.
\end{proof}

\begin{proof}[Proof of Theorem \ref{mt2}]
The first assertion of the theorem is the content of Lemma \ref{l10}. 
The proof of Lemma \ref{l09} [with $\widehat{\bb D}_N$ defined as
$\widehat{\bb D}_N \;=\; \{(j,k) : 1 \le j<k\le N-1\}$] yields that
for every $\delta>0$,
\begin{equation*}
\lim_{N\to\infty} \max_{\delta N \le j<k<N}\,
\big|\, \varphi_N(j,k) \, \big| \; =\; 0\;. 
\end{equation*}
A Schwarz inequality, as in the proof of Theorem \ref{mt0}, completes
the argument because
$\rho_N(k) = (k/N)\, \beta + [1-(k/N)]\, \rho_N(0)$, $1\le k\le N$.
\end{proof}

\smallskip\noindent{\bf Acknowledgments.} We thank H. Spohn for
suggesting the problem and S. Grosskinsky for fruitful discussions.
C. Landim has been partially supported by FAPERJ CNE
E-26/201.207/2014, by CNPq Bolsa de Produtividade em Pesquisa PQ
303538/2014-7, and by ANR-15-CE40-0020-01 LSD of the French National
Research Agency.


\begin{thebibliography}{99}

\bibitem{bdgjl02} L. Bertini, A. De Sole, D. Gabrielli,
  G. Jona-Lasinio, and C. Landim, , Macroscopic fluctuation theory for
  stationary non- equilibrium states, J. Stat. Phys. {\bf 107}, 635
  (2002).

\bibitem{bdgjl15} L. Bertini, A. De Sole, D. Gabrielli,
  G. Jona-Lasinio, and C. Landim, Macroscopic fluctuation
  theory. Rev. Mod. Phys. {\bf 87}, 593--636, (2015).

\bibitem{d07} B. Derrida: Non-equilibrium steady states: Fluctuations
  and large deviations of the density and of the current.
  J. Stat. Mech.  Theory Exp., P07023 (2007).

\bibitem{dls02} B. Derrida, J. L. Lebowitz, and E. R. Speer.  Large
  deviation of the density profile in the steady state of the open
  symmetric simple exclusion process. J. Stat. Phys. {\bf 107}, 599
  (2002).

\bibitem{e17} C. Erignoux: Hydrodynamic limit of boundary exclusion
  processes with nonreversible boundary dynamics. preprint
  arXiv:1712.04877, (2017).

\bibitem{els90} G. Eyink, J. Lebowitz, H. Spohn: Hydrodynamics of
  stationary non-equilibrium states for some stochastic lattice gas
  models. Comm. Math. Phys. {\bf 132} 253--283 (1990).

\bibitem{fri75} A.  Friedman: {\it Stochastic differential equations
    and applications.}  Volume 1, Academic Press, New York, 1975.

\bibitem{klo95} C. Kipnis, C. Landim and S. Olla: Macroscopic
  properties of a stationary non–equilibrium distribution for a
  non–gradient interacting particle system, Ann. Inst. H. Poincar\'e,
  Prob. et Stat. {\bf 31}, 191–221 (1995).

\bibitem{lov98} C. Landim, S. Olla, S. Volchan: Driven tracer particle
  in one-dimensional symmetric simple
  exclusion. Comm. Math. Phys. {\bf 192}, 287--307 (1998).

\bibitem{law91} G. F. Lawler: {\it Intersections of Random Walks}.
  Modern Birkh\"auser Classics, Birkh\"auser Basel, 1991.

\bibitem{o31} L. Onsager: Reciprocal relations in irreversible
  processes.  I, II Phys. Rev. {\bf 37}, 405 and {\bf 38}, 2265 (1931)

\bibitem{om53} L. Onsager and S. Machlup: Fluctuations and
  irreversible processes. Phys. Rev. {\bf 91}, 1505 (1953)

\bibitem{s09} N. Sonigo: Semi-infinite TASEP with a complex boundary
  mechanism. J. Stat. Phys. {\bf 136} 1069-1094 (2009)

\end{thebibliography}
\end{document}